\documentclass{article}
\usepackage[T1]{fontenc}
\usepackage{titlesec}
\usepackage[utf8]{inputenc}
\usepackage{amsmath}
\usepackage{amssymb}
\usepackage{amsfonts}
\usepackage{amsthm}
\usepackage{geometry}
\usepackage{babel}
\usepackage{xcolor}
\newtheorem{lemma}{Lemma}
\newtheorem{theorem}{Theorem}
\newtheorem{proposition}{Proposition}

\newtheorem*{remark}{Remark}

\numberwithin{equation}{section}
\usepackage{url}
\usepackage[hidelinks]{hyperref}

\title{PhD}
\author{}
\date{January 2021}

\begin{document}

\begin{center}
    \textbf{On Empirical Spectral Distributions for Random Tensor Product Models}
\end{center}

\begin{center}
     Simona Diaconu\footnote{Courant Institute, New York University, simona.diaconu@nyu.edu}
\end{center}

\begin{abstract}
    In statistics, assuming samples are independent is reasonable. However, this property can fail to hold for the features, 
    a distinction that has led to several lines of work aiming to remove the latter assumption of independence 
    present in the early literature, while preserving the original conclusions. 
    Empirical spectral distributions of 
    covariance matrices are key for understanding the data, and their almost sure convergence is oftentimes
    desirable. The random tensor product model, \(X=(x_{i_1}x_{i_2}...x_{i_d})_{1 \leq i_1<...<i_d \leq n}\) 
    for \(x_1,x_2,\hspace{0.05cm}...\hspace{0.05cm},x_n\) i.i.d., introduced by the machine learning community, has a dependence structure for its features far from trivial and has been studied 
    in recent years. When \(x_1 \in \mathbb{R}, \mathbb{E}[x_1^4]<~\infty, \frac{d}{n^{1/3}}=~o(1),\) the empirical spectral distributions of the covariance matrices were proved to converge almost surely to Marchenko-Pastur laws in the random matrix theory regime. This work extends this result to the range \(\frac{d}{n^{1/2}}=~o(1)\) when \(x_1\) is symmetric with a subgaussian norm slowly growing in \(n\) 
    (the aforesaid range arises naturally, and the result failing when \(\frac{d}{n^{1/2}} \to \infty\) appears to be a plausible claim) and shows that similarly to the case with independent features, 
    the almost sure convergence holds under more general conditions on the covariance structure than the isotropic case. The latter result 
    provides a means of deriving convergence for empirical spectral distributions of random matrices, applicable 
    to other models as well so long as their entries exhibit a certain degree of concentration. 
\end{abstract}


\section{Introduction}

For a random matrix\footnote{This paper deals solely with symmetric real-valued random matrices. However, the results could be easily extended to the complex Hermitian random matrices as well.} \(A \in \mathbb{R}^{n \times n},A=A^T,\) its empirical spectral distribution is the function \(F_{A}:\mathbb{R} \to [0,1],\) 
\[F_A(x)=\frac{1}{n}\sum_{1 \leq j \leq n}{\chi_{x \geq \lambda_j(A)}},\]
where \(\lambda_1(A) \geq \lambda_2(A) \geq ... \geq \lambda_n(A)\) are the eigenvalues of \(A.\) Sample covariance matrices, among the most pervasive tools in statistics, are given by 
\begin{equation}\tag{\(M\)}\label{covar}
    A=\frac{1}{m}XX^T \in \mathbb{R}^{n \times n},
\end{equation}
where the columns of \(X \in \mathbb{R}^{n \times m}\) are independent copies of a vector \(X_0 \in \mathbb{R}^n,\) whose entries are oftentimes called features. The function \(F_A\) remains an object of interest due to its ability not only of capturing the global behavior of spectrum, but also of encapsulating local features such as the asymptotics of the leading eigenvalues (e.g., Bai and Yin~\cite{baiyin}, Bai and Silverstein~\cite{baisilv1s},~\cite{baisilv2s}, Bai et al.~\cite{baietal}). Marchenko and Pastur in their seminal work \cite{marchenkopastur} give sufficient conditions on the structure of Hermitian matrices that guarantee the weak convergence of their empirical spectral distributions to deterministic measures in probability. Several subsequent papers have relaxed the original assumptions on the random variables underlying the entries of \(A:\) namely, for covariance matrices given by (\ref{covar}), the conditions in \cite{marchenkopastur} that the entries of \(X_0\) are i.i.d., centered and have a finite fourth moment have been weakened considerably. It is now well established that when the entries of \(X_0\) are independent, all variances being finite is a sufficient and necessary condition for convergence of \(F_A\) (Belinschi et al.~\cite{demboetal} treat a family of models with infinite second moments and show almost sure weak convergence of the empirical spectral distributions, but under a different scaling than that in (\ref{covar}): their theorem \(1.1\) showcases striking differences between these limits and the Marchenko-Pastur laws (\ref{mplaws})), 
while the 
identically distributed assumption can be replaced by the features being a linear transformation of i.i.d. random variables contingent on the underlying matrix satisfying a certain asymptotic behavior expressed also in terms of empirical spectral distributions (see Silverstein~\cite{silv1}, Silverstein and Bai~\cite{silv2}, as well as Theorem~\ref{mainth2} below). Furthermore, efforts to relax the independence assumption have also been successful, some replacements being martingale conditions, \(m\)-independence, block models to name but a few (see subsection \(1.2\) in Bryson et al.~\cite{bryson} for a more detailed account, including references). 
\par
This work focuses on the random tensor product model, popularized by machine learning architectures and analyzed in the random matrix theory literature in recent years. Concretely, let \(Z=Z(n) \in \mathbb{R}^{N \times p}\) be a random matrix whose \(p\) columns are i.i.d., 
\begin{equation}\tag{RMT}\label{modelz}
    d=d(n),\hspace{1cm} N=N(n)=\binom{n}{d}, \hspace{1cm} p=p(n), \hspace{1cm} \gamma_n=\frac{N}{p} \to \gamma>0,
\end{equation}
and under a bijection \(\{(i_1,i_2,\hspace{0.05cm}...\hspace{0.05cm},i_d): 1 \leq i_1<i_2<...<i_d \leq n\} \to \{1,2,\hspace{0.05cm}...\hspace{0.05cm},N\},\) each column of \(Z\) is equal in distribution to 
\begin{equation}\tag{RTP}\label{zmoddel}
    Z_0 \in \mathbb{R}^{\binom{n}{d}}, \hspace{0.5cm} Z_{0,i_1,i_2,\hspace{0.05cm}...\hspace{0.05cm},i_d}=x_{i_1}x_{i_2}...x_{i_d} \hspace{0.5cm} (1 \leq i_1<i_2<...<i_d \leq n)
\end{equation} 
for \((x_{j})_{1 \leq j \leq n}\) i.i.d., \(\mathbb{E}[x_{1}]=0,\mathbb{E}[x_{1}^2]=1.\) Bryson et al.~\cite{bryson} show that the empirical spectral distribution of 
\[S=S(n)=\frac{1}{p}ZZ^T \in \mathbb{R}^{N \times N}\] 
converges almost surely to the Marchenko-Pastur distribution with parameter \(\gamma \in (0,\infty),\) i.e., 
\begin{equation}\tag{\(MP\)}\label{mplaws}
    F_\gamma(x)=(1-\frac{1}{\gamma})\chi_{x \geq 0,\gamma>1}+\int_{-\infty}^{x}{p_{\gamma}(y)dy}, \hspace{0.3cm} p_{\gamma}(y)=\begin{cases}
    \frac{\sqrt{((1+\sqrt{\gamma})^2-y) \cdot (y-(1-\sqrt{\gamma})^2)}}{2\pi \gamma y}, & y \in [(1-\sqrt{\gamma})^2,(1+\sqrt{\gamma})^2]\\
0, & y \not \in [(1-\sqrt{\gamma})^2,(1+\sqrt{\gamma})^2]
\end{cases},
\end{equation}
(see subchapter \(3.1\) in Bai and Silverstein~\cite{baisilvbook} for details) so long as\footnote{For ease of notation, the dependence of \(d,p,N\) on \(n\) is dropped in the remainder of this paper.} 
\[\frac{d}{n^{1/3}}=o(1), \hspace{0.5cm} \mathbb{E}[x_{11}^4]<\infty,\] 
and the authors conjecture the result remains valid when \(\frac{d}{n^{1/2}}=o(1).\) This latter condition appears naturally after looking at the lengths of the columns of \(Z:\) it can be shown that 
\begin{equation}\tag{\(V1\)}\label{varz0}
    Var(\frac{1}{N}||Z_1||^2) \propto \frac{d^2}{n}
\end{equation}
when \(\mathbb{E}[x_1^4]<\infty,\frac{d}{n^{1/2}}=o(1)\) (see Lemma~\ref{lenlemma} and (\ref{lbvar}) herein for full details: these results are tight in the sense that when \(d=\Omega(n^{1/2}),\) the variance can increase exponentially in \(\frac{d^2}{n},\) a statement quantified by Lemma~\ref{lenlemma2}), and variances that vanish asymptotically have oftentimes been used as sufficient conditions for weak convergences of empirical spectral distributions of random matrices. In particular, early extensions of the pioneering work of Marchenko  and Pastur~\cite{marchenkopastur} such as Silverstein~\cite{silv1}, Bai and Silverstein~\cite{silv2}, as well as more recent developments (e.g., Bai and Zhou~\cite{baizhou}) require 
\begin{equation}\tag{\(V2\)}\label{varz00}
    Var(\frac{1}{N}Z_1^TAZ_1)=o(1), \hspace{0.5cm} \forall A \in \mathbb{R}^{N \times N}, A=A^T, ||A|| \leq 1.
\end{equation}
It is known nevertheless that condition (\ref{varz00}) is not necessary (e.g., Götze and Tikhomirov~\cite{gotze1},~\cite{gotze2}): its presence allows for a somewhat direct use of the Stieltjes transform, and avoiding it oftentimes requires an entirely brand new proof technique. 
For instance, Götze and Tikhomirov in both \cite{gotze1} and \cite{gotze2} employ a differential characterization of the Marchenko-Pastur laws alongside the Stieltjes transform, while Adamczak~\cite{adamczac} uses a determinant condition (based on Tao and Vu~\cite{taovu}) and concentration of measure (see theorem \(3.9\) in \cite{adamczac}). It is worthwhile mentioning that since (\ref{varz00}) is stronger than (\ref{varz0}), it is not clear whether the latter is also unnecessary: if it were necessary, then it would indicate that the range \(\frac{d}{n^{1/2}}=o(1)\) might be optimal for the random tensor product model given by (\ref{zmoddel}) (Lemma~\ref{lenlemma2} treats solely part of the range \(\frac{d}{n^{1/2}}=\Omega(1)\)). 
\par
This paper confirms the aforesaid conjecture from Bryson et al.~\cite{bryson} under two additional assumptions on the law of \(x_1:\) symmetry and a slowly growing subgaussian norm. The main ingredient remains the Stieltjes transform, which together with concentration of measure and uniqueness of analytical continuations renders two extensions of the convergence for the random tensor product model justified in~\cite{bryson}. The first result is based on 
identity (\ref{554}) for the Stieltjes transform of \(S,\) previously employed, for instance, in Silverstein~\cite{silv1}, Ledoit and Péché~\cite{ledoitpeche}, as well as in one of the author's former works,~\cite{oldpaper}. 


\begin{theorem}\label{mainth}
Suppose \(Z \in \mathbb{R}^{N \times p}\) has i.i.d. columns with distributions given by (\ref{zmoddel}) with \(d<n^{1/2},\)
\[x_1=x(n), \hspace{0.5cm} x_1 \overset{d}{=} -x_1,\hspace{0.5cm} \mathbb{E}[x_1^2]=1, \hspace{0.5cm} \mathbb{E}[x_1^{2k}] \leq (Ck)^k \hspace{0.5cm} (k \in \mathbb{N}), \hspace{0.5cm} \lim_{n \to \infty}{\frac{\log{C}}{\log{\frac{n^{1/2}}{d}}}}=0\]
for some \(C=C(n) \geq 2.\)
If (\ref{modelz}) holds, 
then the empirical spectral distribution of \(S=\frac{1}{p}ZZ^T\) converges weakly to the Marchenko-Pastur law \(F_{\gamma},\) given by (\ref{mplaws}), 
almost surely.
\end{theorem}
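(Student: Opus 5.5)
We prove the theorem via the Stieltjes transform \(m_n(z)=\frac{1}{N}\mathrm{tr}(S-zI)^{-1}\), \(z\in\mathbb{C}^+\). It suffices to show that for each fixed \(z\) in a countable set with an accumulation point in \(\mathbb{C}^+\), \(m_n(z)\) converges almost surely to \(m_\gamma(z)\), the Stieltjes transform of \(F_\gamma\). Since the \(m_n\) are uniformly bounded analytic functions on \(\{\Im z\ge \eta\}\), Vitali/uniqueness of analytic continuation then gives convergence on all of \(\mathbb{C}^+\), hence weak convergence almost surely. The argument has two halves: concentration of \(m_n\) around its mean, and a self-consistent equation for the mean.

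\emph{Concentration.} Write \(S=\frac1p\sum_{k\le p} Z_kZ_k^T\) with independent columns. Replacing one column changes \(S\) by a perturbation of rank at most two, so \(\mathrm{tr}(S-zI)^{-1}\) changes by at most \(4/\Im z\). McDiarmid's (or the Burkholder martingale) inequality therefore gives \(\mathbb{P}(|m_n-\mathbb{E}m_n|>t)\le 2\exp(-cN^2t^2\Im(z)^2/p)\). This is summable because \(N\asymp p\to\infty\) polynomially fast in \(n\), so Borel–Cantelli reduces the problem to \(\mathbb{E}m_n(z)\to m_\gamma(z)\). No assumption on \(d\) is needed for this step.

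\emph{Self-consistent equation.} We use identity (\ref{554}), i.e. the Sherman–Morrison relation
\[
1+zm_n=\frac{p}{N}\Big(1-\frac1p\sum_k\frac{1}{1+\frac1pZ_k^T(S_k-zI)^{-1}Z_k}\Big),
\]
with \(S_k=S-\frac1pZ_kZ_k^T\). The key claim is that, for \(A=(S_k-zI)^{-1}\), which is independent of \(Z_k\) and satisfies \(\|A\|\le 1/\Im z\),
\[
\mathbb{E}\Big|\tfrac1pZ_k^TAZ_k-\tfrac1p\mathrm{tr}A\Big|^2\to0.
\]
Given this, \(\frac1p\mathrm{tr}A=\gamma_n m_n+O(1/(p\Im z))\), and we obtain \(1+z\,\mathbb{E}m_n=\gamma_n^{-1}\big(1-\mathbb{E}\frac{1}{1+\gamma\,\mathbb{E}m_n}\big)+o(1)\). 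This is the MP quadratic equation. Its unique solution in \(\mathbb{C}^+\) with the correct sign of imaginary part identifies the limit of every subsequence, so \(\mathbb{E}m_n\to m_\gamma\).

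\emph{Main obstacle: the quadratic form.} The claim above is essentially (\ref{varz00}), but it is needed only for the random resolvent \(A\) and not for every bounded \(A\). Conditionally on \(A\), expand
\[
\mathbb{E}(Z^TAZ-\mathrm{tr}A)^2=\sum_{I,J,I',J'}A_{IJ}\overline{A_{I'J'}}\,\big(\mathbb{E}[Z_IZ_JZ_{I'}Z_{J'}]-\delta_{IJ}\delta_{I'J'}\big).
\]
Symmetry of \(x_1\) kills every term in which some index appears an odd number of times in the multiset \(I\cup J\cup I'\cup J'\). This is where symmetry enters.

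The surviving configurations fall into two groups:
\begin{itemize}
\item \((I,J)=(I',J')\) or its transpose, which contribute \(\le 2\|A\|_F^2=O(N)=o(p^2)\);
\item diagonal terms \(\sum_I|A_{II}|^2(\mathbb{E}Z_I^4-1)\) together with the genuinely overlapping index sets, where each index has even total multiplicity but the sets overlap partially.
\end{itemize}
The diagonal part is controlled by \(\mathbb{E}Z_I^4-1\le(1+\mathbb{E}x^4-1)^{d}-1\), and its fraction of overlap is governed by Lemma~\ref{lenlemma}, giving \(O(d^2/n)\) after the \(1/p^2\) normalization.

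For the partially overlapping configurations I would group terms by the overlap pattern \(|I\cap J|,|I\cap I'|,\dots\) and use the moment bound \(\mathbb{E}x^{2k}\le(Ck)^k\) to bound the high-multiplicity products. Each configuration with \(r\) coincidences costs a combinatorial factor of order \((d^2/n)^r\) times \(C^{O(r)}\). The assumption \(\log C=o(\log(n^{1/2}/d))\) makes \(C^{O(1)}d^2/n\to0\), so the resulting series sums to \(o(1)\). Controlling these entries requires bounding sums like \(\sum_{J,J'}A_{IJ}\overline{A_{IJ'}}\) by \(\|A\|^2\), through Cauchy–Schwarz over rows of \(A\).

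I expect this combinatorial-plus-operator-norm bookkeeping to be the hard step. If a direct bound fails, the fallback is to replace \(A\) by its conditional structure (it is invariant in law under the sign flips \(x_j\to -x_j\), which act as diagonal \(\pm1\) conjugations) and average over these flips, which again forces even multiplicities.
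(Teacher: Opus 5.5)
Your route differs from the paper's, and all of its weight rests on the step you flag yourself: the bound $\mathbb{E}|\frac{1}{p}Z_k^TAZ_k-\frac{1}{p}\mathrm{tr}A|^2\to 0$ for the resolvent $A$, which is essentially (\ref{simplifiedcond}). As written, that step is not proved, so the proposal has a gap exactly there.

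The paper deliberately avoids this estimate. It remarks that (\ref{simplifiedcond}) does not appear to hold in the whole range $d=o(n^{1/2})$, being known only for $d=o(n^{1/3})$. Instead it:
\begin{itemize}
\item truncates on $\|Z_1\|^2\le 2N$ and expands $\frac{1}{1-x}$ to $M$ terms;
\item matches $\mathbb{E}[(\frac{1}{N}Z_1^TR_1Z_1)^m]$ with $\mathbb{E}[(\frac{1}{N}\mathrm{tr}R_1)^m]$ by the shared-degree induction, which is where $\mathbb{E}[x^{2k}]\le (Ck)^k$ and the growth condition on $C$ are used;
\item gets $\mathbb{E}[m_{F_n}(z)]\to m_\gamma(z)$ only on $D(\gamma)$ and finishes by analytic continuation.
\end{itemize}
Your sketch of the crux does not address the real difficulty. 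The heuristic ``each coincidence costs $(d^2/n)^rC^{O(r)}$'' ignores that, for a fixed pair $(I,J)$, the number of compatible $(I',J')$ can be of order $n^{|I\cap J|}$, so no entrywise or single-row bound suffices. The auxiliary estimate you propose is also false in general: $\sum_{J,J'}A_{IJ}\overline{A_{IJ'}}=|\sum_J A_{IJ}|^2$ can be of order $N\|A\|^2$. Finally, only $\mathbb{E}[x^4]$ can enter a second moment, so the high-moment bounds play no role here.

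The missing idea is to classify the nonvanishing index patterns and then apply Cauchy--Schwarz on a bipartite relation; under the symmetry hypothesis this appears to close the gap.
\begin{itemize}
\item \emph{Classification.} In a nonvanishing term every $r\in\{1,\dots,n\}$ lies in $0$, $2$ or $4$ of $I,J,I',J'$. The constraints $|I|=|J|=|I'|=|J'|=d$ force the counts of elements lying in exactly $\{I,J\}$ and exactly $\{I',J'\}$ to coincide (call it $a$). Likewise $\{I,I'\},\{J,J'\}$ share a count $b$, and $\{I,J'\},\{J,I'\}$ share a count $c$. With $g$ elements in all four sets, $a+b+c+g=d$ and the expectation equals $(\mathbb{E}[x^4])^g$.
\item \emph{Case $b=c=0$.} Then $I=J$ and $I'=J'$, and these terms total at most $\|A\|^2\mathrm{Var}(\|Z_0\|^2)=O(\|A\|^2N^2\mathbb{E}[x^4]d^2/n)$ by Lemma~\ref{lenlemma}.
\item \emph{Case $k=b+c\ge 1$.} Then $I\ne J$. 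Given $(I,J)$, the pair $(I',J')$ is fixed by choosing the $g$ elements of $I\cap J$ common to all four, the $b$ elements of $I\setminus J$ and $c$ elements of $J\setminus I$ that go to $I'$, and the $a$-set $I'\cap J'$ outside $I\cup J$. This gives at most $D=\binom{a+g}{g}\binom{k}{b}^2\frac{n^a}{a!}$ partners, and symmetrically in the other direction. Cauchy--Schwarz then bounds the pattern's contribution by $D(\mathbb{E}[x^4])^g\|A\|_F^2\le D(\mathbb{E}[x^4])^gN\|A\|^2$.
\item \emph{Summation.} Summing over patterns gives at most $N\|A\|^2e^{\mathbb{E}[x^4]d^2/n}\sum_{k\ge1}\binom{2k}{k}\frac{n^{d-k}}{(d-k)!}=N^2\|A\|^2\,O(d/n)$.
\end{itemize}
Hence $\sup_{\|A\|\le 1}N^{-2}\mathbb{E}|Z_0^TAZ_0-\mathrm{tr}A|^2=O(\mathbb{E}[x^4]d^2/n+d/n)$, which is condition $(b)$ of Theorem~\ref{baizhouth}. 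Under the theorem's hypotheses $\mathbb{E}[x^4]d^2/n\le 4C^2d^2/n\to0$, so this is $o(1)$.

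With this lemma your argument runs on all of $\mathbb{C}^+$, with no truncation or analytic continuation. It needs only symmetry and $\mathbb{E}[x^4]d^2/n\to 0$ rather than the subgaussian growth condition, and via Theorem~\ref{baizhouth} it would also give Theorem~\ref{mainth2} directly. The paper's route avoids second-moment control of arbitrary quadratic forms, at the price of the high-moment combinatorics and the restricted domain $D(\gamma)$.
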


\par
A few comments on the statement above are in order: the lower bound on \(C\) is arbitrary in the sense that \(2\) can be replaced by \(1+\epsilon_0\) for any \(\epsilon_0>0\) and so is the upper bound on \(\frac{d}{n^{1/2}}.\) These two inequalities entail that the limit condition on \(C\) is equivalent to\footnote{Contingent on \(d<n^{1/2},C \geq 2:\) the equivalence in Lemma~\ref{lemmaequiv} below does not hold when \(b_n \to 0,\) e.g., \(a_n=2^n,b_n=2^{-n^2}.\)} \(\frac{d}{n^{1/2}}=o(1).\) 
Furthermore, the following simple lemma illustrates the utility of the latter assumption: 
the existence of a sequence \(q_n \in \mathbb{N}\) as described below for \(a_n=C(n),b_n=\frac{n^{1/2}}{d}\) will be vital in the proof of Theorem~\ref{mainth}.

\begin{lemma}\label{lemmaequiv}
    Suppose \(a_n \geq 2, b_n >1.\) Then \(\lim_{n \to \infty}{\frac{\log{a_n}}{\log{b_n}}}=0\) if and only if there exists a sequence \(q_n \in \mathbb{N}\) with \(\lim_{n \to \infty}{q_n}=\infty, \lim_{n \to \infty}{\frac{a_n^{q_n}}{b_n}}=0.\)
\end{lemma}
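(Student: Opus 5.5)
Set \(r_n=\frac{\log a_n}{\log b_n}>0\), which is well defined since \(a_n\geq 2\) and \(b_n>1\). Then \(\frac{a_n^{q}}{b_n}=\exp\big(\log b_n\,(q r_n-1)\big)\) for any \(q\in\mathbb{N}\). The whole argument rests on this identity: the ratio tends to zero precisely when \(\log b_n\,(1-q_nr_n)\to\infty\). I will prove the two directions separately.

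For the forward direction, assume \(r_n\to 0\). Take \(q_n=\max\big(1,\lfloor r_n^{-1/2}\rfloor\big)\). Since \(r_n\to 0\), we have \(q_n\to\infty\) and \(q_nr_n\leq r_n^{1/2}\to 0\), so eventually \(1-q_nr_n\geq 1/2\). It remains to show \(\log b_n\to\infty\). This is where \(a_n\geq 2\) enters: \(\log b_n=\frac{\log a_n}{r_n}\geq\frac{\log 2}{r_n}\to\infty\). Hence \(\log b_n\,(1-q_nr_n)\geq \frac12\log b_n\to\infty\), and therefore \(\frac{a_n^{q_n}}{b_n}\to 0\).

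For the converse, assume \(q_n\to\infty\) and \(\frac{a_n^{q_n}}{b_n}\to 0\). For large \(n\) we then have \(a_n^{q_n}<b_n\), i.e. \(q_n\log a_n<\log b_n\). Since \(\log b_n>0\), this is equivalent to \(r_n<\frac{1}{q_n}\). Together with \(r_n>0\) and \(q_n\to\infty\), this gives \(r_n\to0\).

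The only delicate point is the forward direction, which needs \(\log b_n\to\infty\). This is exactly what fails in the footnote's counterexample, where the bound \(a_n\geq2\) is combined with \(b_n\to0\). Here it is recovered from \(a_n\geq 2\) and \(r_n\to0\). The converse uses only \(b_n>1\), so that dividing by \(\log b_n\) preserves the inequality.
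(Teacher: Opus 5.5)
Your proof is correct and is essentially the paper's argument. Both write \(a_n^{q_n}/b_n=b_n^{q_nr_n-1}\), use \(a_n\geq 2\) to force \(\log b_n\to\infty\), and pick \(q_n\to\infty\) keeping \(q_nr_n\) bounded away from \(1\): you take \(q_n\approx r_n^{-1/2}\) so that \(q_nr_n\to 0\), while the paper takes \(q_n=1+\lfloor \frac{1}{2r_n}\rfloor\) so that \(q_nr_n\leq \frac{2}{3}\) eventually. The converse is the same bound \(0<r_n\leq\frac{1}{q_n}\) in both.
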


\begin{proof}
    Suppose \(\lim_{n \to \infty}{\frac{\log{a_n}}{\log{b_n}}}=0.\) This and \(a_n \geq 2,b_n > 1\) imply \(\lim_{n \to \infty}{\log{b_n}}=\infty,\) whereby \(a_n=b_n^{\epsilon_n}\) with \(\epsilon_n>0, \lim_{n \to \infty}{\epsilon_n}=0.\) Let \(q_n=1+\lfloor \frac{1}{2\epsilon_n} \rfloor.\) Then \(q_n \leq \frac{1}{2\epsilon_n}+1 \leq \frac{2}{3\epsilon_n}\) for \(n \geq n_0,\) whereby 
    \[0<\frac{a_n^{q_n}}{b_n}= b_n^{\epsilon_n \cdot q_n-1} \leq b_n^{-1/3}\]
    gives \(\lim_{n \to \infty}{\frac{a_n^{q_n}}{b_n}}=0,\) and \(q_n \in \mathbb{N}, \lim_{n \to \infty}{q_n}=\infty\) are immediate from the definition of this sequence.
    \par
    Conversely, suppose \(q_n \in \mathbb{N},\) \(\lim_{n \to \infty}{q_n}=\infty,\) and \(\lim_{n \to \infty}{\frac{a_n^{q_n}}{b_n}}=0.\) 
    Then for \(n \geq n_0,\)
    \[0<\frac{\log{a_n}}{\log{b_n}} \leq \frac{1}{q_n},\] 
    because \(\lim_{n \to \infty}{(q_n\log{a_n}-\log{b_n})}=-\infty,\) 
    whereby \(\lim_{n \to \infty}{\frac{\log{a_n}}{\log{b_n}}}=0.\)
\end{proof}

The second result is a generalization of Theorem~\ref{mainth} (\(T=I\) corresponds to Theorem~\ref{mainth}): it relies on a key idea employed in the pioneering work~\cite{marchenkopastur} by Marchenko and Pastur, the integral equations satisfied by the Stieltjes transforms of the limiting measures have unique solutions\footnote{Subject to \(m(z) \in \{r \in \mathbb{C}: -\frac{1-c}{z}-cr \in \mathbb{C}^{+}\}\) for \(z \in \mathbb{C}^+.\)}.
Additionally, universality of expectations is crucial for this result, and it must be mentioned that the proof method can be employed to other models contingent on a certain concentration of measure.

\begin{theorem}\label{mainth2}
Suppose \(n,d \in \mathbb{N}, d<n^{1/2},\)
\[x_1=x(n), \hspace{0.5cm} x_1 \overset{d}{=} -x_1,\hspace{0.5cm} \mathbb{E}[x_1^2]=1, \hspace{0.5cm} \mathbb{E}[x_1^{2k}] \leq (Ck)^k \hspace{0.5cm} (k \in \mathbb{N}), \hspace{0.5cm} \lim_{n \to \infty}{\frac{\log{C}}{\log{\frac{n^{1/2}}{d}}}}=0\]
for some \(C=C(n) \geq 2.\) Let \(T=T(n) \in~\mathbb{R}^{N \times N}\) be deterministic, symmetric, positive semidefinite with \(\sup_{n \in \mathbb{N}}{||T(n)||}<\infty\) and its empirical spectral distribution converging weakly to a deterministic probability distribution \(H.\) 
If (\ref{modelz}) holds, 
then the empirical spectral distribution of \(\tilde{S}=\frac{1}{p}\tilde{Z}\tilde{Z}^T,\) where \(\tilde{Z} \in \mathbb{R}^{N \times p}\) has i.i.d. columns equal in law to \(T^{1/2}Z_0\) for \(Z_0 \in \mathbb{R}^{N}\) given by (\ref{zmoddel}), converges weakly to a probability measure \(\mu\) almost surely, and the Stieltjes transform of \(\mu\) satisfies 
\begin{equation}\tag{\(IE\)}\label{steq}
    m(z)=\int_{\mathbb{R}}{\frac{1}{z-t}d\mu(t)}=\int_{\mathbb{R}}{\frac{1}{z-t(1-\gamma+\gamma z m(z))}dH(t)} \hspace{1cm} (z \in \mathbb{C}^+).
\end{equation}
\end{theorem}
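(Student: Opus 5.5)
The plan is the classical Stieltjes transform route of Marchenko--Pastur and Silverstein--Bai: concentration around the mean, then an approximate fixed-point equation for the mean, then uniqueness of the solution of (\ref{steq}). The sign convention is \(m_n(z)=\frac{1}{N}\mathrm{tr}(zI-\tilde{S})^{-1}\) for \(z\in\mathbb{C}^+\), which maps into \(\mathbb{C}^-\) as in (\ref{steq}).

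\textbf{Concentration.} Write \(\tilde{S}=\frac1p\sum_{k\le p} y_ky_k^T\) with \(y_k=T^{1/2}z_k\) i.i.d. Since \(\tilde{S}\) is a sum of \(p\) independent rank-one terms, the martingale (Burkholder) decomposition with respect to the columns gives \(\mathbb{E}|m_n(z)-\mathbb{E}m_n(z)|^4=O(p^{-2}(\Im z)^{-4})\). Borel--Cantelli then gives \(m_n(z)-\mathbb{E}m_n(z)\to 0\) almost surely for each \(z\). Taking a countable dense set of \(z\) and using equicontinuity, this holds for all \(z\in\mathbb{C}^+\) simultaneously.

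\textbf{Approximate equation for \(\mathbb{E}m_n\).} Set \(R=(zI-\tilde{S})^{-1}\) and let \(R_k\) be the resolvent with the \(k\)-th column removed. The identity \(I=zR-\tilde{S}R\), traced against a suitable deterministic matrix, combined with Sherman--Morrison,
\[y_k^TR=\frac{y_k^TR_k}{1-\frac1p y_k^TR_ky_k},\]
reduces everything to controlling the quadratic forms \(\frac1p y_k^TAy_k=\frac1p z_k^TT^{1/2}AT^{1/2}z_k\) for \(A\) equal to \(R_k\) or \(R_k\) times deterministic bounded matrices. For the i.i.d. case (\(T=I\)) Theorem~\ref{mainth} already provides this through identity (\ref{554}). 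Here the mean of such a form is \(\frac1p\mathrm{tr}(TA)\), because \(\mathbb{E}z_kz_k^T=I\) by symmetry and independence. The needed step is \(\mathrm{Var}(\frac1p z^TBz)\to0\), in an averaged sense, for the specific random \(B=T^{1/2}R_kT^{1/2}\) independent of \(z_k\). It is not needed for all \(B\), which would be (\ref{varz00}) and is false.

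With that step in hand, the standard computation yields \(\mathbb{E}m_n(z)=\frac1N\mathrm{tr}\big(zI-(1-\gamma_n+\gamma_nz\,\mathbb{E}m_n(z))T\big)^{-1}+o(1)\). Here the quantity \(1-\gamma_n+\gamma_n z\,\mathbb{E}m_n\) arises from \(\frac1p\mathrm{tr}(TR_k)\) through the relation \(\frac1p\mathrm{tr}(\tilde{S}R)\) between the \(N\times N\) and \(p\times p\) traces.

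\textbf{Identifying the limit.} Tightness follows from \(\sup\|T\|<\infty\) and \(\mathbb{E}\frac1N\mathrm{tr}\,\tilde{S}\le\|T\|\). Any subsequential limit \(m\) of \(\mathbb{E}m_n\) is the Stieltjes transform of a subprobability measure, and by the weak convergence \(F_T\to H\) it satisfies (\ref{steq}). By the uniqueness result for (\ref{steq}) in the admissible region (the footnote preceding the theorem, as in Marchenko--Pastur), all limits coincide. Vitali's theorem and uniqueness of analytic continuation then extend the identification from a set with an accumulation point to all of \(\mathbb{C}^+\). That \(\mu\) is a probability measure follows from tightness.

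\textbf{Main obstacle.} The hard step is the quadratic form concentration. I would first try to prove universality of expectations: replace \(\tilde{Z}\) by a matrix with i.i.d. Gaussian entries and covariance \(T\) (i.e., \(T^{1/2}G\)) and show that \(\mathbb{E}m_n\) changes by \(o(1)\). This would use a Lindeberg column-by-column swap, where each swap requires matching only \(\mathbb{E}[\frac1p z^TBz]\) and \(\mathbb{E}[(\frac1p z^TBz)^2]\) up to \(o(1/p)\) relative error. Symmetry of \(x_1\) kills all odd moment terms. The fourth-moment term of \(z^TBz\) involves counting pairs of \(d\)-subsets with overlap patterns, bounded via Lemma~\ref{lenlemma}-type estimates by \(O(d^2/n)\). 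For higher-order remainders, the moment bound \(\mathbb{E}x_1^{2k}\le(Ck)^k\) with \(k=q_n\) from Lemma~\ref{lemmaequiv} gives factors \(C^{q_n}d/n^{1/2}\to0\). For the Gaussian comparison model, (\ref{steq}) is classical (Silverstein--Bai), which yields the equation for \(\lim\mathbb{E}m_n\) directly. This bypasses the approximate equation step, and concentration then transfers the conclusion to almost sure convergence.
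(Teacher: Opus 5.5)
\textbf{There is a genuine gap.} Your skeleton (martingale concentration, an approximate fixed-point equation for \(\mathbb{E}[m_n(z)]\), uniqueness for (\ref{steq})) is the paper's three-step scheme. But the one step that carries all the model-specific content is never established: control of \(\frac{1}{p}z_k^TBz_k\) with \(B=T^{1/2}R_kT^{1/2}\). The approximate equation is derived ``with that step in hand'', and the mechanism you propose for it would not deliver it.

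A column-by-column Lindeberg swap is not an expansion in a small quantity. The rank-one term \(\frac{1}{p}y_ky_k^T\) has norm \(\frac{1}{p}\|y_k\|^2\), typically of order \(N/p\). Exchanging column \(k\) changes \(\frac{1}{N}\mathrm{tr}((zI-\tilde{S})^{-1})\) by \(\frac{1}{N}\cdot\frac{\frac{1}{p}y_k^TR_k^2y_k}{1-\frac{1}{p}y_k^TR_ky_k}\), which is a nonlinear function of quadratic forms of order one. For the \(p\) swaps to cost \(o(1)\) in total, the expectations of this function must agree to \(o(1)\) under the two laws. Agreement of the first two moments of an order-one random variable gives this, in general, only when its variance tends to \(0\). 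That is exactly the concentration you were trying to bypass. Likewise, the ``higher-order remainders'' controlled by \(C^{q_n}d/n^{1/2}\) have no counterpart in this scheme.

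The paper also compares with a Gaussian vector \(y=T^{1/2}y_0\), but it matches all moments rather than two. Specifically, it:
\begin{itemize}
\item truncates to \(\|Z_1\|^2\le 2N\);
\item restricts to \(\mathrm{Im}(z)\ge 1+4\mathcal{K}\gamma\), so that \(|\frac{1}{p}\tilde{Z}_1^T\tilde{R}_1\tilde{Z}_1|\le\frac{1}{2}\) there;
\item expands \(1/(1-Q)\) to order \(M(n)\to\infty\);
\item shows each of these moments agrees with its Gaussian counterpart via the shared-degree induction (this is where \(\mathbb{E}[x_1^{2k}]\le(Ck)^k\) and Lemma~\ref{lemmaequiv} enter);
\item reaches all of \(\mathbb{C}^+\) by analytic continuation.
\end{itemize}

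\textbf{How to close the gap.} Your claim that (\ref{varz00}) is false is incorrect, and the gap closes with a direct second-moment count that is simpler than the paper's route. Index the entries of \(Z_0\) by \(d\)-subsets \(I,J,K,L\subset\{1,\dots,n\}\) and let \(\mu_4=\mathbb{E}[x_1^4]\). Take \(B\) deterministic and symmetric with \(\|B\|\le 1\).
\begin{itemize}
\item By symmetry of \(x_1\), \(\mathbb{E}[Z_{0,I}Z_{0,J}Z_{0,K}Z_{0,L}]\ne 0\) only if \(I\triangle J=K\triangle L\), and then it equals \(\mu_4^{|I\cap J\cap K\cap L|}\).
\item The terms with \(I=J\), \(K=L\) contribute \((\mathrm{tr}(B))^2\) plus at most \(\mathrm{Var}(\|Z_0\|^2)\le 2N^2\mu_4d^2/n\), by Lemma~\ref{lenlemma}.
\item Lemma~\ref{lenlemma} does not cover the off-diagonal patterns. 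Handle them with \(|B_{IJ}B_{KL}|\le\frac{1}{2}(|B_{IJ}|^2+|B_{KL}|^2)\), which bounds their total by \(\|B\|_F^2\max_{I\ne J}W(I,J)\le N\max_{I\ne J}W(I,J)\).
\item Here, for \(|I\triangle J|=2s\), the weighted number of admissible \((K,L)\) is \(W(I,J)=\binom{2s}{s}\sum_{t}\binom{d-s}{t}\binom{n-d-s}{d-s-t}\mu_4^{t}\le N(\frac{4d}{n})^{s}e^{\mu_4d^2/(n-2d)}\).
\end{itemize}
Hence \(N^{-2}\mathbb{E}[|Z_0^TBZ_0-\mathrm{tr}(B)|^2]=O(\mu_4d^2/n+d/n)\). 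This vanishes under the hypotheses, since \(\mu_4\le 4C^2\) and \(Cd/n^{1/2}\to 0\).

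Applied to \(T^{1/2}BT^{1/2}\), this is condition \((b)\) of Theorem~\ref{baizhouth}. Conditioning on \(R_k\) gives your specific-\(B\) statement for every \(z\in\mathbb{C}^+\); the same count works for complex \(B\). With this estimate inserted, your argument, or Theorem~\ref{baizhouth} itself with the correction of subsection~\ref{4.1}, proves the theorem. It needs no truncation, no power series, nothing beyond symmetry and the fourth moment, and no restriction on \(\mathrm{Im}(z)\).

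As submitted, however, the proposal omits this estimate, declares its uniform version false, and substitutes an argument that would not supply it.
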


The remainder of the paper is organized as follows. Section~\ref{sectplan} presents the main components of the proof of Theorem~\ref{mainth} (they also form the backbone of Theorem~\ref{mainth2}), section~\ref{sectdetails} consists of the justification of the main ingredient in the aforesaid argument, section~\ref{sectgen} contains the proof of Theorem~\ref{mainth2}, and section~\ref{sectauxlemmas} gathers a few lemmas needed along the way.


\section{Revised 3-Step Method}\label{sectplan}

For a symmetric matrix \(A \in \mathbb{R}^{n \times n},\) its Stieltjes transform is \(m:\mathbb{C}^{+} \to \mathbb{C}\) given by\footnote{Several authors define the Stieltjes transform as \(m(z)=\frac{1}{n}tr((A-zI)^{-1}):\) the two formulations are clearly equivalent up to a sign change.}
\[m(z)=\frac{1}{n}tr((zI-A)^{-1}),\]
where \(\mathbb{C}^{+}=\{z \in \mathbb{C}: Im(z)>0\},\) and it is well-known that weak convergence of empirical spectral distributions ensues from the pointwise convergence of the Stieltjes transforms. 
In particular, the probabilities underlying the empirical spectral distributions of \(A_n\) converge weakly to a measure \(\mu\) almost surely when
\[m_{A_n}(z)=\frac{1}{n}tr((zI-A_n)^{-1}) \to m_{\mu}(z)=\int{\frac{1}{z-x}d\mu(x)} \hspace{0.5cm} (z \in \mathbb{C}^{+})\]
with probability one. This has yielded a 3-step method that has been repeatedly used throughout the literature to justify the aforesaid claim on empirical spectral distributions of random matrices:
\vspace{0.3cm}
\par
\(1.\) \(m_{A_n}(z)-\mathbb{E}[m_{A_n}(z)] \xrightarrow[]{a.s.} 0\) for all \(z \in \mathbb{C}^{+},\)
\vspace{0.3cm}
\par
\(2.\) \(\mathbb{E}[m_{A_n}(z)] \to m_{\mu}(z)\) for all \(z \in \mathbb{C}^{+},\)
\vspace{0.3cm}
\par
\(3.\) \(m_{A_n}(z)-m_\mu(z) \to 0\) for all \(z \in \mathbb{C}^{+}\) almost surely.\vspace{0.3cm}\(\newline\)
These are the crucial ideas in the proof of 
\begin{center}
    \(\lim_{n \to \infty}{m_{A_n}(z)}=m_{\mu}(z)\) for all \(z \in \mathbb{C}^{+}\) almost surely
\end{center} 
as presented in Bai and Silverstein~\cite{baisilvbook}. This \(3\)-step technique has been employed in extensions of the seminal results in Marchenko and Pastur~\cite{marchenkopastur} (e.g., dispensing with the finite fourth moment assumption, improving convergence in probability to almost sure convergence), including a recent relaxation of the original condition of the columns of \(X\) having independent entries by Bai and Zhou~\cite{baizhou}. 
\par
In the current case (i.e., the setup of Theorem~\ref{mainth}), \(1.\) remains unchanged, \(2.\) is justified on a smaller domain, 
\begin{equation}\label{dgamma}
    D(\gamma)=\{z \in \mathbb{C}: Im(z) \geq \frac{16}{9}+4\gamma\} \subset \mathbb{C}^{+},
\end{equation}
and \(3.\) consequently needs some slight adjustments as well. Up to a great extent, the arguments for Theorems~\ref{mainth} and \ref{mainth2} are founded on a simple well-known property of analytical continuations, 
uniqueness. This allows the relaxation aforementioned in \(2.,\) while the domain shrinkage \(\mathbb{C}^{+} \to D(\gamma)\) permits tackling the expectations of interest from a different angle, one based on power series expansions. Before justifying the three stages above, a rearrangement of the Stieltjes transform, used in Silverstein~\cite{silv1}, Ledoit and Péché~\cite{ledoitpeche}, Diaconu~\cite{oldpaper},
is looked at next.

\subsection*{Stieltjes Transform}

Let \(m_n:\mathbb{C}^{+} \to \mathbb{C}\) 
be the Stieltjes transform of the Marchenko-Pastur law with parameter \(\gamma_n,\) i.e.,
\[m_n(z)=\frac{z+\gamma_n-1-\sqrt{(z-\gamma_n+1)^2-4z}}{2\gamma_n z} \hspace{0.5cm} (z \in \mathbb{C}^+),\]
where the square root is given by the branch of the complex logarithm that ensures \(Im(m_n(z))<0\) (alternatively, \(m_n(z)\) is the unique solution to (\ref{eqmp}) with \(Im(m_n(z))<0:\) see subchapter \(3.3\) in Bai and Silverstein~\cite{baisilvbook} for details), while \(m_{F_n}\) is the Stieltjes transform of the empirical spectral distribution of \(S \in \mathbb{R}^{N \times N},\)
\[m_{F_n}(z)=\frac{1}{N}tr((zI-S)^{-1}) \hspace{0.5cm} (z \in \mathbb{C}^+).\]
\par
Denote the columns of \(Z\) by \(Z_1,Z_2, \hspace{0.05cm} ... \hspace{0.05cm}, Z_{p} \in \mathbb{R}^{N}.\) In \cite{oldpaper}, it is shown that when \(z \in \mathbb{R}, z \to \infty,\)
\begin{equation}\label{53}
    \sqrt{n} \cdot z(m_n(z)-m_{F_n}(z)) \xrightarrow[]{p} 0,
\end{equation}
under the assumption that the entries in \(Z\) are i.i.d. with bounded subgaussian norm, i.e.,
\[||Z_{11}||_{\psi_2}=\inf{\{t>0, \mathbb{E}[e^{Z_{11}^2/t^2}] \leq 2\}} \leq C.\]
Identity
\begin{equation}\tag{\(ST\)}\label{554}
    -1+zm_{F_n}(z)=-\frac{1}{\gamma_n}
    +\frac{1}{N}\sum_{1 \leq j \leq p}{\frac{1}{1-\frac{1}{p}Z_j^TR_jZ_j}} ,\hspace{0.5cm}
\end{equation}
where \(R_j=(zI-S+\frac{1}{p}Z_jZ_j^T)^{-1}\) for \(1 \leq j \leq p,\) is fundamental in the proof of (\ref{53}) (see Lemma~\ref{ledoitpechelemma} for proof). 
\begin{equation}\label{zeq}
    z(m_{F_n}(z)-m_n(z))=\frac{1}{\gamma_n} \cdot (\frac{1}{1-\gamma_n m_{F_n}(z)}-\frac{1}{1-\gamma_n m_n(z)})+\Delta_1+\Delta_2,
\end{equation}
where
\[\Delta_1=\frac{1}{N}\sum_{1 \leq j \leq p}{(\frac{1}{1-\frac{1}{p}Z_j^TR_jZ_j}-\frac{1}{1-\frac{1}{p}tr(R_j)})}, \hspace{0.3cm} \Delta_2=\frac{1}{N}\sum_{1 \leq j \leq p}{(\frac{1}{1-\frac{1}{p}tr(R_j)}-\frac{1}{1-\frac{1}{p}tr((zI-S)^{-1})})}\]
insofar as \(m_n(z)=\frac{z+\gamma_n-1-\sqrt{(z-\gamma_n+1)^2-4z}}{2\gamma_n z}\) leads to
\begin{equation}\label{55}
    -1+zm_n(z)=-\frac{1}{\gamma_n}+\frac{1}{\gamma_n} \cdot \frac{1}{1-\gamma_n m_n(z)}
\end{equation}
from
\[(z-\gamma_n+1)^2-4z=(z+\gamma_n-1)^2-4\gamma_nz,\]
providing
\begin{equation}\label{eqmp}
    \gamma_n zm^2_n(z)-m_n(z)(z+\gamma_n-1)+1=0,
\end{equation}
or equivalently 
\[(\gamma_n m_n(z)-1) \cdot (zm_n(z)-1)=-m_n(z),\]
since \(B_1^2-2AB_1+B_2=0\) when \(A-B_1=\sqrt{A^2-B_2},\) an identity yielding (\ref{eqmp}) by plugging in
\[B_1=2\gamma_n zm_n(z), \hspace{0.5cm} A=z+\gamma_n-1 , \hspace{0.5cm} B_2=4\gamma_nz.\] 
\par
Formula (\ref{554}), which holds when \(z \in \mathbb{C}^{+},\) is used to justify the three modified steps mentioned at the beginning of this section. These three phases are detailed next.

\subsection*{First Step}

A close look at the proof of 
\begin{equation}\tag{s1}\label{as}
    m_{F_n}(z)-\mathbb{E}[m_{F_n}(z)] \xrightarrow[]{a.s.} 0, \hspace{0.5cm} \forall z \in \mathbb{C}^{+}
\end{equation}
in Bai and Silverstein~\cite{baisilvbook} reveals that the columns \((Z_j)_{1 \leq j \leq p}\) being independent suffices for this convergence. The argument is presented below for the sake of completeness.
\par
Fix \(z \in \mathbb{C}^+:\) let \(\mathcal{F}_{k}=\sigma(Z_j,k<j\leq p)\) for \(0 \leq k \leq p-1,\) and \(\mathcal{F}_p=\{\emptyset,\Omega\}.\) Then
\begin{equation}\label{mgdiff}
    m_{F_n}(z)-\mathbb{E}[m_{F_n}(z)]=\sum_{1 \leq k \leq p}{(\mathbb{E}[m_{F_n}(z)|\mathcal{F}_{k-1}]-\mathbb{E}[m_{F_n}(z)|\mathcal{F}_k])}:=\sum_{1 \leq k \leq p}{N^{-1}\delta_k},
\end{equation}
and observe that by virtue of \(\mathcal{F}_{k} \subset \mathcal{F}_{k-1}\) for \(1 \leq k \leq p,\) the random variables \((\delta_{p+1-k})_{1 \leq k \leq p}\) are martingale differences with respect to the filtration \((\mathcal{F}_{p+1-k})_{1 \leq k \leq p}.\) Because \(r_k(z)=\frac{1}{N}tr(R_k)\) and \(Z_k\) are independent, it ensues that
\[N^{-1}\delta_k=\mathbb{E}[m_{F_n}(z)-r_k(z)|\mathcal{F}_{k-1}]-\mathbb{E}[m_{F_n}(z)-r_k(z)|\mathcal{F}_k],\]
whereby \(|\delta_k| \leq \frac{2}{Im(z)}\) as
\begin{equation}\label{res1}
    N \cdot [m_{F_n}(z)-r_k(z)]=tr((zI-S)^{-1})-tr(R_k)=\frac{\frac{1}{p}Z_k^T(zI-S)^{-2}Z_k}{1+\frac{1}{p}Z_k^T(zI-S)^{-1}Z_k}
\end{equation}
from
\[(A+uv^T)^{-1}=A^{-1}-\frac{A^{-1}uv^TA^{-1}}{1+v^TA^{-1}u}\]
(an identity established in Lemma~\ref{linalglemma}), and
\begin{equation}\label{res2}
    |\frac{\frac{1}{p}Z_k^T(zI-S)^{-2}Z_k}{1+\frac{1}{p}Z_k^T(zI-S)^{-1}Z_k}| \leq \frac{\frac{1}{p}Z_k^T((Re(z)I-S)^2+(Im(z))^2I)^{-2}Z_k}{Im(\frac{-1}{p}Z_k^T(zI-S)^{-1}Z_k)}=\frac{1}{Im(z)},
\end{equation}
a consequence of the triangle inequality,
\[\frac{1}{|z-\lambda|^2}=\frac{1}{(Re(z)-\lambda)^2+(Im(z))^2}=\frac{1}{Im(z)}Im(\frac{-1}{z-\lambda})\] 
as well as
\[\frac{1}{z-\lambda}=\frac{Re(z)-\lambda-iIm(z)}{(Re(z)-\lambda)^2+(Im(z))^2} \hspace{0.5cm} (\lambda \in \mathbb{R}).\]

\par
Return now to (\ref{mgdiff}): since \(|\delta_k| \leq \frac{2}{Im(z)}\) and they form martingale differences, an extension of the Burkholder's inequality for complex-valued martingales (Lemma \(2.12\) in Bai and Silverstein~\cite{baisilvbook}) provides
\begin{equation}\label{bd4th}
    \mathbb{E}[|m_{F_n}(z)-\mathbb{E}[m_{F_n}(z)]|^4] \leq \frac{K_4}{N^4} \cdot \mathbb{E}[(\sum_{1 \leq k \leq p}{|\delta_k|^2})^2] \leq \frac{K_4}{N^4} \cdot \frac{16p^2}{(Im(z))^4}=\frac{C(z,\gamma_n)}{N^2},
\end{equation}
for some universal \(K_4>0,\) from which claim (\ref{as}) ensues by the first Borel-Cantelli lemma, \(N \geq n,\) and Markov's inequality due to
\[\mathbb{P}(\sup_{r \geq M}{|m_{F_r}(z)-\mathbb{E}[m_{F_r}(z)]| \geq \epsilon}) \leq \sum_{r \geq M}{\frac{1+C(z,\gamma)}{n^2 \cdot \epsilon^4}} \hspace{1cm} (\epsilon>0, M \geq M_0).\]

\subsection*{Second Step}

Fix again \(z \in \mathbb{C}^+:\) (\ref{554}) holds, i.e.,
\[-1+zm_{F_n}(z)=-\frac{1}{\gamma_n}+\frac{1}{N}\sum_{1 \leq j \leq p}{\frac{1}{1-\frac{1}{p}Z_j^TR_jZ_j}}\]
and yields
\begin{equation}\label{eq1}
    -1+z\mathbb{E}[m_{F_n}(z)]=-\frac{1}{\gamma_n}+\frac{1}{\gamma_n}\mathbb{E}[\frac{1}{1-\frac{1}{p}Z_1^TR_1Z_1}]
\end{equation}
(\(\mathbb{E}[\frac{1}{|1-\frac{1}{p}Z_1^TR_1Z_1|}]<\infty\) is immediate from \(\frac{1}{|1-\frac{1}{p}Z_1^TR_1Z_1|} \leq 1+\frac{1}{Im(z)} \cdot \frac{1}{p}||Z_1||^2:\) see (\ref{eq1234})). When
\begin{equation}\label{simplifiedcond}
    Var(\frac{1}{p}Z_1^TR_1Z_1)=o(1),
\end{equation}
the random variable on the right-hand side of (\ref{eq1}) can be shown to be close to 
\[\frac{1}{1-\frac{1}{p}tr((zI-S))^{-1}}=\frac{1}{1-\gamma_nm_{F_n}(z)},\]
which in turn is roughly
\[\frac{1}{1-\mathbb{E}[\frac{1}{p}tr((zI-S))^{-1}]}=\frac{1}{1-\gamma_n\mathbb{E}[m_{F_n}(z)]}\] 
(via step \(1\)). The variance condition (\ref{simplifiedcond}) is crucial in Bai and Zhou~\cite{baizhou} and represents the gist of the argument behind (\ref{53}) (Bai and Zhou~\cite{baizhou} formulate it differently, \(Var(\frac{1}{p}Z_1^TBZ_1)=o(1)\) when \(B=B^T\) is deterministic with \(||B|| \leq 1:\) in the current isotropic case, (\ref{simplifiedcond}) would suffice). However, for the model considered in Theorem~\ref{mainth}, (\ref{simplifiedcond}) does not appear to hold in the entire range \(\frac{d}{n^{1/2}}=o(1)\) (Bryson et al.~\cite{bryson} justify (\ref{simplifiedcond}) when \(\frac{d}{n^{1/3}}=o(1)\)). 
\par
To bypass the aforesaid issue, a vector truncation is employed: this is effective so long as the length of \(||Z_1||\) is concentrated around its mean. Especially, this occurs when \(\mathbb{E}[x_{1}^4]=o(\frac{n}{d^2})\) by virtue of Lemma~\ref{lenlemma}, and it must be said that the bound on the variance is tight: see (\ref{lbvar}) as well as Lemma~\ref{lenlemma2} for what can occur outside the range \(\frac{d}{n^{1/2}}=o(1),\) implicit in the conditions of Theorem~\ref{mainth} (see comments below its statement). 
The conclusion for \(2.\) (i.e., step \(2\)) 
ensues from the following result.

\begin{proposition}\label{propstep2}
     Suppose the assumptions in Theorem~\ref{mainth} hold. 
     Then 
     \begin{equation}\tag{\(s2.0\)}\label{concl1}
         \mathbb{E}[\frac{1}{1-\frac{1}{p}Z_1^TR_1Z_1}]=\frac{1}{1-\gamma_n\mathbb{E}[m_{F_n}(z)]}+o(1)
     \end{equation}
     when \(Im(z) \geq 1+4\gamma.\) Particularly, if \(Im(z) \geq \frac{16}{9}+4\gamma,\) then
     \begin{equation}\tag{\(s2\)}\label{concl2}
         \mathbb{E}[m_{F_n}(z)]=m_{\gamma}(z)+o(1).
     \end{equation}
\end{proposition}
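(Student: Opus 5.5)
The plan is to compare \(\frac{1}{1-\frac{1}{p}Z_1^TR_1Z_1}\) with \(\frac{1}{1-\frac{1}{p}tr(R_1)}\), and then with \(\frac{1}{1-\gamma_n\mathbb{E}[m_{F_n}(z)]}\). The variance condition (\ref{simplifiedcond}) is not available in the whole range, so instead I would exploit the large imaginary part. For \(Im(z)\) large, \(\|R_1\|\le 1/Im(z)\) is small, and I would expand
\[\frac{1}{1-\frac{1}{p}Z_1^TR_1Z_1}=\sum_{k\ge 0}\Big(\frac{1}{p}Z_1^TR_1Z_1\Big)^k,\]
valid whenever \(\frac{1}{p}\|Z_1\|^2/Im(z)<1\). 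Since \(\frac{1}{p}\|Z_1\|^2\approx \gamma_n\), this holds with room to spare on the event \(E=\{\frac{1}{p}\|Z_1\|^2\le \gamma_n+\epsilon\}\) once \(Im(z)\ge 1+4\gamma\). This is the vector truncation mentioned before the statement.

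\textbf{Step 1: the complement of \(E\).} By Lemma~\ref{lenlemma}, \(Var(\frac1N\|Z_1\|^2)\lesssim \frac{d^2}{n}\mathbb{E}[x_1^4]=o(1)\), because \(\mathbb{E}[x_1^4]\le 4C^2\) and \(C^2 d^2/n\to0\) by Lemma~\ref{lemmaequiv}. Hence \(\mathbb{P}(E^c)=o(1)\). On \(E^c\) I use the crude bound \(\frac{1}{|1-\frac1pZ_1^TR_1Z_1|}\le 1+\frac{1}{Im(z)}\frac1p\|Z_1\|^2\) together with Cauchy--Schwarz, which shows the contribution of \(E^c\) is \(o(1)\). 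On \(E\), the series converges geometrically with ratio at most \((\gamma+\epsilon)/Im(z)\le 1/4\). I therefore truncate it at \(k\le q_n\), with \(q_n\) taken from Lemma~\ref{lemmaequiv} applied to \(a_n=C\) and \(b_n=n^{1/2}/d\). The tail is then \(O(4^{-q_n})=o(1)\).

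\textbf{Step 2: moment comparison (the main obstacle).} For each fixed \(k\le q_n\), condition on \(R_1\), which is independent of \(Z_1\). I need
\[\mathbb{E}\Big[\Big(\tfrac1pZ_1^TR_1Z_1\Big)^k\chi_E\Big]=\mathbb{E}\Big[\Big(\tfrac1p tr R_1\Big)^k\Big]+\text{error}_k,\]
with \(\sum_{k\le q_n}\text{error}_k=o(1)\). Expanding \((Z_1^TBZ_1)^k\) produces sums over \(2k\) multi-indices \(I_1,\dots,I_{2k}\) of \(\mathbb{E}\prod Z_{1,I_l}\). By symmetry of \(x_1\), only configurations in which every \(x_j\) appears an even number of times survive. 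The leading term is the pairing \(I_{2l-1}=I_{2l}\), which gives \(\prod \mathbb{E}\|\cdot\|\)-type traces \((tr B)^k\). Every other configuration forces some indices of distinct \(d\)-subsets to overlap; each such overlap should cost a factor \(d^2/n\) relative to the diagonal term, while the moment growth contributes at most \((Ck)^{O(k)}\). I would try to bound the total non-diagonal contribution by something like \(\|B\|^k\gamma_n^k\big((C q_n)^{O(1)}d^2/n\big)\) summed appropriately. This is where the choice \(a_n^{q_n}/b_n\to0\) is used, which is also why \(\log C/\log(n^{1/2}/d)\to0\) is needed. The truncation to \(E\) complicates the exact moment computation, so I would first compute untruncated moments for \(k\le q_n\) and then remove \(\chi_E\) using Cauchy--Schwarz and \(\mathbb{P}(E^c)=o(1)\), with the \(2k\)-th moments controlled by the subgaussian bound. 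Controlling this combinatorics uniformly in \(k\le q_n\) is the step I expect to be hardest.

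\textbf{Step 3: resumming and deriving (\ref{concl2}).} Resumming the traces gives \(\mathbb{E}[\frac{1}{1-\frac1p tr R_1}]\). By (\ref{res1})--(\ref{res2}), \(|\frac1p tr R_1-\gamma_n m_{F_n}(z)|\le \frac{1}{p\,Im(z)}\). The bound (\ref{bd4th}), combined with Lipschitz continuity of \(w\mapsto \frac{1}{1-w}\) on the region \(|w|\le 1/2\) (which holds since \(|\gamma_n m|\le\gamma_n/Im(z)\)), then yields (\ref{concl1}). For (\ref{concl2}), plugging (\ref{concl1}) into (\ref{eq1}) shows that \(\mathbb{E}[m_{F_n}(z)]\) satisfies (\ref{55}) up to \(o(1)\), equivalently the quadratic (\ref{eqmp}) up to \(o(1)\). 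For \(Im(z)\ge \frac{16}{9}+4\gamma\), the map \(m\mapsto\) the right-hand side is a contraction on \(\{|m|\le 1/Im(z)\}\), because its derivative is bounded by roughly \(\gamma/(Im(z)(1-\gamma/Im(z))^2)<1\). Both \(m_\gamma(z)\) and \(\mathbb{E}[m_{F_n}(z)]\) lie in this disk, and a contraction has a unique approximate fixed point up to \(o(1)\). Since \(\gamma_n\to\gamma\), this gives \(\mathbb{E}[m_{F_n}(z)]=m_\gamma(z)+o(1)\).
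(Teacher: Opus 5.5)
Your architecture is the same as the paper's: truncate at $\|Z_1\|^2\lesssim N$, handle the tail with Lemma~\ref{lenlemma}, expand a truncated geometric series, compare moments, then resum using (\ref{bd4th}). Your contraction step is equivalent to the paper's factorization (\ref{o1id}). One correction there: the derivative of $m\mapsto \frac1z(1-\frac1{\gamma_n}+\frac{1}{\gamma_n(1-\gamma_n m)})$ is $\frac{1}{z(1-\gamma_n m)^2}$, of modulus at most $\frac{16}{9\,Im(z)}$, without your extra factor $\gamma$.

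The genuine gap is Step 2. It is the real content of the proposition, and you leave it as a heuristic. Moreover, the heuristic ``each overlap costs a factor $d^2/n$ relative to the diagonal term'' is false as a counting statement once off-diagonal resolvent entries enter. For $k=2$, take $i_1=A\cup\{a\}$, $i_2=A\cup\{b\}$, $i_3=B\cup\{a\}$, $i_4=B\cup\{b\}$ with $|A|=|B|=d-1$, $A\cap B=\emptyset$, $a\neq b$. Then $Z_{1i_1}Z_{1i_2}Z_{1i_3}Z_{1i_4}$ is a perfect square. There are of order $N^2d^2$ such tuples, against $N^2$ diagonal ones. So bounding $|R_{i_1i_2}R_{i_3i_4}|\le (Im(z))^{-2}$ and counting gives an error of order $d^2$, not $o(1)$.

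What saves the estimate is the $\ell^2$ structure of the resolvent, $\sum_j|R_{ij}|^2\le\|R_1\|^2$, used through Cauchy--Schwarz. In this example, $|R_{i_1i_2}R_{i_3i_4}|\le\frac12(|R_{i_1i_2}|^2+|R_{i_3i_4}|^2)$ gives $O(\frac dn(Im(z))^{-2})$. For general $m$ this is the content of subsection~\ref{subsect4}:
\begin{itemize}
\item parametrize tuples by shared degrees;
\item use that counterparts must have equal shared degree, and that $i_{v'}$ is determined by the remaining indices;
\item peel off the counterpart pair of a vertex of minimal positive shared degree via Cauchy--Schwarz;
\item induct on $m\sum_v shdeg(i_v)$.
\end{itemize}
The gain obtained is only $(d^2 2^{14m}/n)^{\frac1{12}\sum_v shdeg(i_v)}$. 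It is paid against a moment loss $(\mathbb{E}[x^{2m}])^{\frac12\sum_v shdeg(i_v)}\le (Cm)^{\frac m2\sum_v shdeg(i_v)}$. There is also a separate $O(m^2d^2/n)$ correction for diagonal tuples with adjacent indices.

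Two calibration problems follow from this.

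First, the truncation level. The gain is only a fractional power of $d^2/n$ while the loss is $(Cm)^{O(m)}$. So the provable error after summing over $m\le M$ is of size $(16CM)^M(d/n^{1/2})^{1/3}$. The property $C^{q_n}d/n^{1/2}\to0$ from Lemma~\ref{lemmaequiv} does not make this vanish at $M=q_n$. For example, with $C=2$ one may have $q_n\asymp\log(n^{1/2}/d)$, and then $q_n^{q_n}$ beats every power of $n^{1/2}/d$. You must truncate lower, as the paper does with $M=\lfloor\min(\frac{q}{12},\frac{\log(1+b)}{(\log\log(1+b))^2})\rfloor$, $b=\frac{n^{1/2}}{d}$. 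This is harmless, because your tail on $E$ decays geometrically for any $M\to\infty$.

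Second, removing $\chi_E$. Doing this by Cauchy--Schwarz is a fine alternative to the paper's $\mathbb{E}[Y^m\chi_{Y>2}]\le2^{-M}\mathbb{E}[Y^{m+M}]$. But it needs $\sup_{k\le 2M}\mathbb{E}[(\frac1N\|Z_1\|^2)^{k}]=O(1)$, and this does not follow from subgaussianity of $x_1$ alone. The reason is that $\|Z_1\|^2$ is a chaos of degree $2d$; for instance, $\mathbb{E}[(x_1\cdots x_d)^{4m}]=(\mathbb{E}[x^{4m}])^d$. You need a combinatorial estimate such as Lemma~\ref{lemmamom}, which is valid only when $2Md\le n^{1/2}/(3Ce)$. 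This is a further constraint on the truncation level.
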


The proof of Proposition~\ref{propstep2}, contained in section~\ref{sectdetails}, consists of several steps: the starting point is writing
\[\mathbb{E}[\frac{1}{1-\frac{1}{p}Z_1^TR_1Z_1}]=\mathbb{E}[\frac{1}{1-\frac{1}{p}Z_1^TR_1Z_1}\cdot \chi_{||Z_1||^2 \leq 2N}]+\mathbb{E}[\frac{1}{1-\frac{1}{p}Z_1^TR_1Z_1}\cdot \chi_{||Z_1||^2>2N}].\]
The second term is shown to be negligible (see Lemma~\ref{tailev}), while the first is dealt with via a power series expansion at \(0.\) The constraint on \(||Z_1||\) and \(Im(z)>4\gamma\) ensure \(\frac{1}{p}|Z_1^TR_1Z_1| \leq \frac{1}{2}\) for \(n \geq n_0\) (see Lemma~\ref{lemmasmall}), and this in conjunction with a truncation of
\[\frac{1}{1-x}=\sum_{k \geq 0}{x^k}\]
allows computing the size of the first expectation above: the \(q^{th}\) partial sum is employed instead of the power series for some \(q=q(n)\to \infty\) (recall the comment preceding Lemma~\ref{lemmaequiv}) and the \(k^{th}\) term is shown to be close to \((\mathbb{E}[\frac{1}{p}tr(R_1)])^k,\) which is in turn close to \((\gamma_n\mathbb{E}[m_{F_n}(z)])^k.\) These two results will produce (\ref{concl1}), while (\ref{concl2}) will be inferred in a similar vein to how (\ref{53}) was deduced in \cite{oldpaper} (see also (\ref{zeq})). 

\subsection*{Third Step}

Suppose the assumptions in Theorem~\ref{mainth} hold. Proposition~\ref{propstep2} yields
\[\mathbb{E}[m_{F_n}(z)]=m_{\gamma}(z)+o(1)\]
when \(z \in D(\gamma)\) (recall (\ref{dgamma})) and gives
together with (\ref{as}) that 
\[m_{F_n}(z)-m_{\gamma}(z) \xrightarrow[]{a.s.} 0.\]
Let \(S(\gamma)=\{q_1+iq_2:q_1,q_2 \in \mathbb{Q}, q_2 \geq \frac{16}{9}+4\gamma\},\) a countable set of points in \(D(\gamma)\) dense in it: then
\[m_{F_n}(z)-m_{\gamma}(z) \to 0\]
for all \(z \in S(\gamma)\) almost surely. This yields
\begin{equation}\tag{\(s3\)}\label{s3}
    m_{F_n}(z)-m_{\gamma}(z) \to 0
\end{equation}
for all \(z \in D(\gamma)\) almost surely since \(m_{\gamma}\) is clearly continuous in \(\mathbb{C}^{+},\) and
\[|m_{F_n}(z_1)-m_{F_n}(z_2)| \leq \frac{|z_1-z_2|}{(\min{(Im(z_1),Im(z_2))})^2}\]
from 
\[\frac{1}{z_1-\lambda}-\frac{1}{z_2-\lambda}=\frac{z_2-z_1}{(z_1-\lambda)(z_2-\lambda)} \hspace{0.5cm} (\lambda \in \mathbb{R}).\]
\par
Take any subsequence \(n_k\) such that \(F_{n_k}\) converges weakly to some distribution \(G\) (no assumption made on \(G:\) in particular, \(G(\infty)>0\) could occur). The Stieltjes transforms must also converge pointwise with probability one (for each \(z \in \mathbb{C}^+,\) their real and imaginary parts are real-valued continuous functions bounded by \(\frac{1}{Im(z)}\)), and so \(G\) has the same Stieltjes transform as \(F_{\gamma},\) the Marchenko-Pastur law with parameter \(\gamma,\) in \(D(\gamma)\) from (\ref{s3}). Since the Stieltjes transform of \(G\) is analytical in \(\mathbb{C}^{+}\) from
\[\frac{m(z_1)-m(z_2)}{z_1-z_2}=-\int{\frac{1}{(z_1-\lambda)(z_2-\lambda)}dG(\lambda)}, \hspace{0.5cm} \lim_{z \to z_1}{\frac{1}{(z_1-\lambda)(z-\lambda)}}= \frac{1}{(z_1-\lambda)^2} \hspace{0.3cm} (z_1,z_2 \in \mathbb{C}^{+}),\]
and 
the dominated convergence theorem, it follows that \(m_G\) and \(m_{\gamma}\) are analytical and equal in \(D(\gamma) \subset \mathbb{C}^{+},\) whereby any of their analytical continuations coincide, from which \(m_G=m_\gamma.\) 
This implies \(G=F_{\gamma}\) (defined by (\ref{mplaws})) 
and concludes Theorem~\ref{mainth} since all weakly convergent subsequences 
 have limit \(F_{\gamma}.\)
\par
This completes the discussion on the three steps employed for justifying Theorem~\ref{mainth}.

\section{Expectations via Truncations}\label{sectdetails}

The goal of this section is proving Proposition~\ref{propstep2}: assume throughout this section that the conditions stated in Proposition~\ref{propstep2} hold. Two properties used below repeatedly are 
\begin{equation}\label{condd}
    \frac{d}{n^{1/2}}=o(1), \hspace{0.5cm} C^{q} \cdot \frac{d}{n^{1/2}}=o(1),
\end{equation}
for some \(q=q(n) \in \mathbb{N}, q(n) \to \infty\) (the latter is entailed by Lemma~\ref{lemmaequiv} applied to \(a_n=C(n),b_n=\frac{n^{1/2}}{d},\) with the former a direct consequence of the latter and \(C \geq 2\)).
\par
Begin by showing (\ref{concl2}) (contingent on (\ref{concl1})). Identities (\ref{eq1}) and (\ref{concl1}) give
\[-1+z\mathbb{E}[m_{F_n}(z)]=-\frac{1}{\gamma_n}+\frac{1}{\gamma_n}\mathbb{E}[\frac{1}{1-\frac{1}{p}Z_1^TR_1Z_1}]=-\frac{1}{\gamma_n}+\frac{1}{\gamma_n} \cdot \frac{1}{1-\gamma_n\mathbb{E}[m_{F_n}(z)]}+o(1).\]
Recall (\ref{55}), i.e., 
\[-1+zm_n(z)=-\frac{1}{\gamma_n}+\frac{1}{\gamma_n} \cdot \frac{1}{1-\gamma_n m_n(z)},\]
from which
\[z(\mathbb{E}[m_{F_n}(z)]-m_n(z))=\frac{1}{\gamma_n} \cdot (\frac{1}{1-\gamma_n \mathbb{E}[m_{F_n}(z)]}-\frac{1}{1-\gamma_n m_n(z)})+o(1),\]
or equivalently,
\begin{equation}\label{o1id}
    (\mathbb{E}[m_{F_n}(z)]-m_n(z)) \cdot (z-\frac{1}{(1-\gamma_n m_{n}(z))(1-\gamma_n \mathbb{E}[m_{F_n}(z)])})=o(1).
\end{equation}
When \(n \geq n_0,\) \(z\) has \(Im(z) \geq 4\gamma_n,\) from which 
\[|z-\frac{1}{(1-\gamma_n m_{n}(z))(1-\gamma_n \mathbb{E}[m_{F_n}(z)])}| \geq |z|-\frac{1}{|1-\gamma_n m_{n}(z)| \cdot |1-\gamma_n \mathbb{E}[m_{F_n}(z)]|} \geq |z|-\frac{1}{(1-\gamma_n \cdot \frac{1}{Im(z)})^2} \geq 4\gamma\]
via the triangle inequality and
\[|z|-\frac{1}{(1-\gamma_n \cdot \frac{1}{Im(z)})^2} \geq |z|-\frac{1}{(1-\gamma_n \cdot \frac{1}{4\gamma_n})^2} \geq Im(z)-\frac{16}{9}.\]
So (\ref{o1id}) entails
\[\mathbb{E}[m_{F_n}(z)]-m_n(z)=o(1).\]
and \(\gamma_n \to \gamma\) gives \(m_n(z) \to m_\gamma(z),\) whereby (\ref{concl2}) follows, i.e.,
\[\mathbb{E}[m_{F_n}(z)]-m_\gamma(z)=o(1).\]
\par
Having completed the justification of (\ref{concl2}), return to (\ref{concl1}), the objective of the remainder of this section. Start with
\[\mathbb{E}[\frac{1}{1-\frac{1}{p}Z_1^TR_1Z_1}]=\mathbb{E}[\frac{1}{1-\frac{1}{p}Z_1^TR_1Z_1}\cdot \chi_{||Z_1||^2>2N}]+\mathbb{E}[\frac{1}{1-\frac{1}{p}Z_1^TR_1Z_1}\cdot \chi_{||Z_1||^2 \leq 2N}].\]
Subsection~\ref{subsect3.1} analyzes the first expectation, while subsections~\ref{subsect2}-\ref{subsect4} 
treat the second. The conclusion of Proposition~\ref{propstep2} results from (\ref{finid1}) and (\ref{bulk1})-(\ref{bulk2}) proved therein.

\subsection{Tail Expectation}\label{subsect3.1}

This subsection justifies that under the conditions in Proposition~\ref{propstep2},
\begin{equation}\label{finid1}\tag{\(E1\)}
    \mathbb{E}[\frac{1}{1-\frac{1}{p}Z_1^TR_1Z_1}\cdot \chi_{||Z_1||^2>2N}]=o(1).
\end{equation}
This is accomplished via the following lemma: since \(\frac{d}{n^{1/2}}=o(1), \mathbb{E}[x^4] \leq 4C^2 \leq \frac{4n^{1/2}}{d}\) (recall (\ref{condd}) and \(C \geq 2\)), the conditions in Lemma~\ref{tailev} below hold for \(n\) sufficiently large, whereby (\ref{finid1}) ensues: the bound yielded by this result is \(O(\frac{d}{n^{1/2}})=o(1).\)

\begin{lemma}\label{tailev}
  Suppose \(d \leq \frac{n}{2},\) and \(\mathbb{E}[x^4] \leq \frac{n-2d+2}{(d-1)^2}.\) Then
    \[|\mathbb{E}[\frac{1}{1-\frac{1}{p}Z_1^TR_1Z_1} \cdot \chi_{||Z_1||^2>2N}]| \leq (1+\frac{2\gamma_n}{Im(z)}) \cdot 2\mathbb{E}[x^4] \cdot \frac{d^2}{n}.\]
\end{lemma}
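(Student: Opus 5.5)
Set \(W=\frac{1}{p}Z_1^TR_1Z_1\). The plan is to bound \(\frac{1}{|1-W|}\) deterministically by a linear function of \(\|Z_1\|^2\), and then control the tail of \(\|Z_1\|^2\) with a second-moment (Chebyshev-type) argument using the variance bound from Lemma~\ref{lenlemma}.

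\textbf{Pointwise bound.} Since \(R_1=(zI-S_1)^{-1}\) with \(S_1=S-\frac{1}{p}Z_1Z_1^T\) real symmetric, diagonalize \(S_1=\sum_i \lambda_i u_iu_i^T\). Then
\[\mathrm{Im}(W)=-\frac{\mathrm{Im}(z)}{p}\sum_i\frac{(u_i^TZ_1)^2}{|z-\lambda_i|^2}\le 0 .\]
If \(|W|\le \frac12\), then \(\frac{1}{|1-W|}\le 2\). Otherwise \(|1-W|\ge|\mathrm{Im}(1-W)|=|\mathrm{Im}\, W|\), and this is the comparison already used in (\ref{res2}) (and in the bound quoted for (\ref{eq1})). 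I would rather use the bound stated in the paper,
\[\frac{1}{|1-W|}\le 1+\frac{1}{\mathrm{Im}(z)}\cdot\frac{1}{p}\|Z_1\|^2=1+\frac{\gamma_n}{\mathrm{Im}(z)}\cdot\frac{\|Z_1\|^2}{N},\]
which gives the cleanest constant. To justify it: \(|1-W|\ge \max(1-\mathrm{Re}\,W,\ |\mathrm{Im}\,W|)\), and one checks \(\frac{1}{|1-W|}\le 1+\frac{|W|}{|1-W|}\) with \(\frac{|W|}{|1-W|}\le \frac{\|Z_1\|^2}{p\,\mathrm{Im}(z)}\). The latter follows exactly as (\ref{res1})--(\ref{res2}) with the roles of \(W\) and the resolvent arranged accordingly, or directly by writing \(\frac{1}{1-W}=1+\frac{1}{p}Z_1^T(zI-S)^{-1}Z_1\) via Lemma~\ref{linalglemma} and using \(\|(zI-S)^{-1}\|\le \frac{1}{\mathrm{Im}(z)}\). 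This identity is the clean route and I would use it.

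\textbf{Expectation on the tail event.} Write \(Y=\frac{\|Z_1\|^2}{N}\), so \(\mathbb{E}[Y]=1\). We get
\[\Bigl|\mathbb{E}\Bigl[\frac{1}{1-W}\chi_{Y>2}\Bigr]\Bigr|\le \mathbb{P}(Y>2)+\frac{\gamma_n}{\mathrm{Im}(z)}\,\mathbb{E}[Y\chi_{Y>2}].\]
On \(\{Y>2\}\) we have \(1\le Y-1\) and \(Y\le 2(Y-1)\), hence
\[\mathbb{P}(Y>2)\le \mathbb{E}[(Y-1)^2]=\mathrm{Var}(Y),\qquad \mathbb{E}[Y\chi_{Y>2}]\le 2\,\mathbb{E}[(Y-1)^2]=2\,\mathrm{Var}(Y).\]
The total is therefore at most \((1+\frac{2\gamma_n}{\mathrm{Im}(z)})\mathrm{Var}(Y)\), and it remains to show \(\mathrm{Var}(Y)\le 2\mathbb{E}[x^4]\frac{d^2}{n}\).

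\textbf{Variance of the length (main obstacle).} This is where I expect the work to be, and I would invoke Lemma~\ref{lenlemma}. If it must be done by hand, use \(\mathbb{E}[x^2]=1\) and expand
\[\mathbb{E}\|Z_1\|^4=\sum_{I,J}\mathbb{E}\Bigl[\prod_{i\in I}x_i^2\prod_{j\in J}x_j^2\Bigr]=\sum_{k=0}^{d}\binom{n}{d}\binom{d}{k}\binom{n-d}{d-k}\mu^k,\qquad \mu=\mathbb{E}[x^4],\]
where \(k=|I\cap J|\). This gives
\[\mathrm{Var}(Y)=\sum_{k\ge1}\frac{\binom{d}{k}\binom{n-d}{d-k}}{\binom{n}{d}}(\mu^k-1)\le \sum_{k\ge 1}\frac{\binom{d}{k}\binom{n-d}{d-k}}{\binom{n}{d}}\mu^k .\]
The ratio of consecutive terms is
\[\frac{(d-k)^2}{(k+1)(n-2d+k+1)}\,\mu\le \frac{(d-1)^2\mu}{2(n-2d+2)}\le \frac12\]
for \(k\ge1\), using the hypothesis \(\mu\le\frac{n-2d+2}{(d-1)^2}\) and \(d\le n/2\). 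So the series is dominated by twice its \(k=1\) term,
\[\frac{d\binom{n-d}{d-1}}{\binom{n}{d}}\mu\le \frac{d^2}{n-d+1}\mu .\]
The care needed is in sharpening this to \(\frac{d^2}{n}\)-type constants, for instance by absorbing the \(-1\) in \(\mu^k-1\) or by using Lemma~\ref{lenlemma} directly, so as to obtain the stated \(2\mathbb{E}[x^4]\frac{d^2}{n}\).
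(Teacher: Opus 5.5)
Your proposal is correct and follows the paper's proof essentially step for step. You use the identity \(\frac{1}{1-W}=1+\frac{1}{p}Z_1^T(zI-S)^{-1}Z_1\) together with \(\|(zI-S)^{-1}\|\le \frac{1}{\mathrm{Im}(z)}\), then control the event \(\{\|Z_1\|^2>2N\}\) by second moments to get \((1+\frac{2\gamma_n}{\mathrm{Im}(z)})\,\mathrm{Var}(\|Z_1\|^2/N)\) (your bound \(Y\le 2(Y-1)^2\) on \(\{Y>2\}\) is a compact form of the paper's split \(Y=1+(Y-1)\)), and finish with Lemma~\ref{lenlemma}, whose hypotheses are exactly those assumed here. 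The sharpening you were unsure about in the hand computation is immediate, since \(\frac{d\binom{n-d}{d-1}}{\binom{n}{d}}=\frac{d^2}{n}\prod_{k=1}^{d-1}\frac{n-2d+1+k}{n-d+k}\le \frac{d^2}{n}\); this is precisely how Lemma~\ref{lenlemma} obtains its constant.
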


\begin{proof}
    Identity (\ref{eq123}) can be rearranged as
\begin{equation}\label{eq1234}
    \frac{1}{1-\frac{1}{p}Z_j^TR_jZ_j}=1+\frac{1}{p}Z_j^T(zI-S)^{-1}Z_j,
\end{equation}
entailing the quantity of interest is at most
\begin{equation}\label{bdnorm}
    \mathbb{E}[(1+\frac{1}{p}|Z_1^T(zI-S)^{-1}Z_1|) \cdot \chi_{||Z_1||^2>2N}] \leq \mathbb{E}[(1+\frac{1}{Im(z)} \cdot \frac{1}{p}||Z_1||^2) \cdot \chi_{||Z_1||^2>2N}]
\end{equation}
by using \(||(zI-S)^{-1}|| \leq \frac{1}{Im(z)}.\) This last bound is
\[\mathbb{E}[(1+\frac{1}{Im(z)} \cdot \frac{1}{p}||Z_1||^2) \cdot \chi_{||Z_1||^2>2N}]\leq (1+\frac{1}{Im(z)} \cdot \frac{N}{p}) \cdot \mathbb{P}(||Z_1||^2>2N)+\frac{1}{Im(z)} \cdot \frac{N}{p} \cdot Var(\frac{1}{N}||Z_1||^2)\]
since \(\mathbb{E}[\frac{1}{N}||Z_1||^2]=1\) gives
\[\mathbb{E}[(\frac{1}{N}||Z_1||^2-1) \cdot \chi_{||Z_1||^2>2N}] \leq \mathbb{E}[(\frac{1}{N}||Z_1||^2-1)^2]=Var(\frac{1}{N}||Z_1||^2).\]
Chebyshev's inequality gives 
\[\mathbb{P}(||Z_1||^2>2N)=\mathbb{P}(|\hspace{0.05cm} ||Z_1||^2-N|>N) \leq \frac{Var(||Z_1||^2)}{N^2},\] 
and together with Lemma~\ref{lenlemma}, it completes the proof insofar as
\[(1+\frac{1}{Im(z)} \cdot \frac{N}{p}) \cdot \mathbb{P}(||Z_1||^2>2N)+\frac{1}{Im(z)} \cdot \frac{N}{p} \cdot Var(\frac{1}{N}||Z_1||^2) \leq (1+\frac{2\gamma_n}{Im(z)}) \cdot Var(\frac{1}{N}||Z_1||^2) \leq \]
\begin{equation}\label{finalineq}
     \leq (1+\frac{2\gamma_n}{Im(z)}) \cdot 2\mathbb{E}[x^4] \cdot \frac{d^2}{n}.
\end{equation} 
\end{proof}

\(\newline\)
\textbf{Remark:} The bound ensuing from (\ref{eq1234}) could be made uniform,
\[\frac{1}{|1-\frac{1}{p}Z_j^TR_jZ_j|} \leq \frac{|z|}{Im(z)}\]
(see section~\ref{sectgen}: particularly, (\ref{prodinv})): however, concentration of the length of \(Z_1\) (captured by \(\mathbb{P}(||Z_1||^2>2N)\)) continues to be needed to guarantee (\ref{finid1}).


\subsection{Power Series Expansion}\label{subsect2}

In this subsection and the forthcoming one, the object of interest is
\[\mathbb{E}[\frac{1}{1-\frac{1}{p}Z_1^TR_1Z_1}\cdot \chi_{||Z_1||^2 \leq 2N}].\]
Since \(1+4\gamma \geq 4 \gamma_n\) for \(n\) sufficiently large, the following result entails that for \(n \geq n_0,\) 
\begin{equation}\label{smalllen}
    \frac{1}{p}|Z_1^TR_1Z_1| \cdot \chi_{||Z_1||^2 \leq 2N} \leq \frac{1}{2}.
\end{equation}

\begin{lemma}\label{lemmasmall}
    If \(Im(z) \geq 4\gamma_n,\) then \(\frac{1}{p}|Z_1^TR_1Z_1| \cdot \chi_{||Z_1||^2 \leq 2N} \leq \frac{1}{2}.\)
\end{lemma}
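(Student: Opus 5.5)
The bound is a direct operator-norm estimate. The plan is to bound \(|Z_1^TR_1Z_1|\) by \(||R_1|| \cdot ||Z_1||^2\), control \(||R_1||\) by \(\frac{1}{Im(z)}\), and then use the truncation \(||Z_1||^2 \leq 2N\) together with \(N=\gamma_n p\).

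First, I will check that \(R_1=(zI-S+\frac{1}{p}Z_1Z_1^T)^{-1}\) is well defined with \(||R_1|| \leq \frac{1}{Im(z)}\). Note that \(S-\frac{1}{p}Z_1Z_1^T=\frac{1}{p}\sum_{j \geq 2}{Z_jZ_j^T}\) is real symmetric, with real eigenvalues \(\lambda\). Hence \(R_1\) is normal, and its eigenvalues are \(\frac{1}{z-\lambda}\), each of modulus at most \(\frac{1}{|Im(z)|}\). This gives
\[||R_1|| \leq \frac{1}{Im(z)}.\]

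Next, for any vector \(v \in \mathbb{R}^N\) we have \(|v^TR_1v| \leq ||R_1|| \cdot ||v||^2\) by the Cauchy-Schwarz inequality. Applying this with \(v=Z_1\), the quantity on the event \(||Z_1||^2 \leq 2N\) satisfies
\[\frac{1}{p}|Z_1^TR_1Z_1| \leq \frac{1}{Im(z)} \cdot \frac{2N}{p}=\frac{2\gamma_n}{Im(z)}.\]
Using \(Im(z) \geq 4\gamma_n\), this is at most \(\frac{1}{2}\). Off the event \(||Z_1||^2 \leq 2N\), the indicator is zero and the claim is trivial.

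There is no real obstacle here. The only point requiring care is the norm bound on \(R_1\), which relies on \(S-\frac{1}{p}Z_1Z_1^T\) being symmetric, so that the resolvent is normal and its norm equals its spectral radius.
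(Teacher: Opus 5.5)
Your proposal is correct and follows essentially the same route as the paper: bound \(||R_1|| \leq \frac{1}{Im(z)}\) from the eigenvalues \(\frac{1}{z-\lambda}\), use \(|Z_1^TR_1Z_1| \leq ||R_1|| \cdot ||Z_1||^2\), and combine the truncation \(||Z_1||^2 \leq 2N\) with \(\frac{N}{p}=\gamma_n\) and \(Im(z) \geq 4\gamma_n\). Your remark that \(R_1\) is normal, so its norm equals its spectral radius, makes explicit a step the paper leaves implicit.
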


\begin{proof}
    Since 
    \[\frac{1}{|z-\lambda|} \leq \frac{1}{Im(z)} \hspace{0.5cm} (\lambda \in \mathbb{R}),\]
    it follows that \(||R_1||=||(zI-\frac{1}{p}\sum_{2 \leq j \leq p}{Z_jZ_j^T})^{-1}|| \leq \frac{1}{Im(z)},\) whereby
    \[\frac{1}{p}|Z_1^TR_1Z_1|\cdot \chi_{||Z_1||^2 \leq 2N} \leq \frac{1}{Im(z)} \cdot \frac{1}{p}||Z_1||^2 \cdot  \chi_{||Z_1||^2 \leq 2N} \leq \frac{2\gamma_n}{Im(z)} \leq \frac{1}{2}.\]
\end{proof}

For \(n \in \mathbb{N},\)
\[M=M(n)=\lfloor \min{(\frac{q(n)}{12},\frac{\log{(1+\frac{n^{1/2}}{d})}}{(\log{\log{(1+\frac{n^{1/2}}{d})}})^2})} \rfloor,\]
with \(q=q(n) \in \mathbb{N}, q(n) \to \infty\) satisfying (\ref{condd}).
The key properties of this sequence are
\begin{equation}\label{propm}
    M(n) \in \mathbb{Z}_{\geq 0},  \hspace{0.8cm} M(n) \to \infty, \hspace{0.8cm} (16CM(n))^{M(n)} \cdot (\frac{d}{n^{1/2}})^{1/8} \to 0:
\end{equation}
the first two are immediate from the definitions of \(M(n),q(n),\) and (\ref{condd}), while the last ensues from
\[(16CM(n))^{M(n)} \cdot (\frac{d}{n^{1/2}})^{1/8}=C^{M(n)}\cdot (\frac{d}{n^{1/2}})^{1/12} \cdot (16M(n))^{M(n)} \cdot (\frac{d}{n^{1/2}})^{1/24} \to 0 \cdot 0=0\]
by using anew (\ref{condd}).
For ease of notation, the dependence on \(n\) is dropped, i.e., \(M(n)\) is denoted by \(M.\) By virtue of (\ref{smalllen}), the identity
\begin{equation}\label{powser}
    \frac{1}{1-x}=1+x+...+x^{M}+\frac{x^{M+1}}{1-x} \hspace{0.5cm} (x \ne 1)
\end{equation}
gives
\begin{equation}\tag{\(E2a\)}\label{bulk1}
    \mathbb{E}[\frac{1}{1-\frac{1}{p}Z_1^TR_1Z_1} \cdot \chi_{||Z_1||^2 \leq 2N}]=1+\Sigma_1+\Sigma_2-\Sigma_3,
\end{equation}
\[\Sigma_1=\sum_{1 \leq m \leq M}{\mathbb{E}[(\frac{1}{p}Z_1^TR_1Z_1)^m]}, \hspace{0.2cm} \Sigma_2=\mathbb{E}[\frac{(\frac{1}{p}Z_1^TR_1Z_1)^{M+1}}{1-\frac{1}{p}Z_1^TR_1Z_1} \cdot \chi_{||Z_1||^2 \leq 2N}], \hspace{0.2cm} \Sigma_3=\sum_{0 \leq m \leq M}{\mathbb{E}[(\frac{1}{p}Z_1^TR_1Z_1)^m\cdot \chi_{||Z_1||^2>2N}]}.\]
Each of these components is dealt with separately: namely, it is shown that
\begin{equation}\tag{\(E2b\)}\label{bulk2}
    \Sigma_1=-1+\frac{1}{1-\gamma_n\mathbb{E}[m_{F_n}(z)]}+o(1), \hspace{0.5cm} \Sigma_2=o(1), \hspace{0.5cm} \Sigma_3=o(1).
\end{equation}
The last two claims ensue from (\ref{iibound}) and (\ref{iiibound}) below since \(M=M(n) \to \infty,\) whereas the first is tackled in the next subsection.
\par
\vspace{0.2cm}
For \(\Sigma_2,\) (\ref{eq1234}) gives
\[\frac{(\frac{1}{p}Z_1^TR_1Z_1)^{M+1}}{1-\frac{1}{p}Z_1^TR_1Z_1}=\frac{1}{p}Z_1^T(zI-S)^{-1}Z_1 \cdot (\frac{1}{p}Z_1^TR_1Z_1)^{M},\]
whereby the pointwise bound \(\frac{1}{p}|Z_1^TBZ_1| \leq \frac{1}{p}||Z_1||^2 \cdot ||B||\) for any Hermitian matrix \(B\) yields that for \(n \geq n_0,\)
\begin{equation}\label{iibound}\tag{\(e_2\)}
    |\Sigma_2| \leq (\frac{2\gamma_n}{Im(z)})^{M+1} \leq (\frac{\frac{1}{2}+2\gamma}{1+4\gamma})^{M+1}=2^{-M-1}.
\end{equation}
\par
For \(\Sigma_3,\) Lemma~\ref{lemmamom} can be applied to
\[d_1=d_2=...=d_m=d, \hspace{0.7cm} m \leq 2M, \hspace{0.7cm} n \geq n_1\] 
because \(md \leq \frac{n^{1/2}}{3Ce}\) when \(n_1\) is large enough to ensure \(C^2 \cdot \frac{d}{n^{1/2}} \leq \frac{1}{9e^2}, M^2 \leq \frac{n^{1/2}}{4d}\) (recall (\ref{condd}) and (\ref{propm})) insofar as in these situations, 
\[md \leq \frac{n^{1/4}}{d^{1/2}} \cdot d=n^{1/4} \cdot d^{1/2} \leq \frac{n^{1/2}}{3Ce}.\] 
This result entails that in such cases
\begin{equation}\label{bddmomm}
    \mathbb{E}[(\frac{1}{N}||Z_1||^2)^{m}] \leq 1+e^{\frac{2C^2e^2m^2d^2}{n}} \cdot \frac{2C^2e^2m^2d^2}{n},
\end{equation}
which in turn yields, similarly to the argument for Lemma~\ref{tailev}, that for \(n \geq n_0+n_1,\)
\begin{equation}\label{iiibound}\tag{\(e_3\)}
    |\Sigma_3| \leq \frac{2(M+1)}{2^M}
\end{equation}
from
\[|\Sigma_3|=|\sum_{0 \leq m \leq M}{\mathbb{E}[(\frac{1}{p}Z_1^TR_1Z_1)^m\cdot \chi_{||Z_1||^2>2N}]}| \leq \sum_{0 \leq m \leq M}{\mathbb{E}[(\frac{1}{p}||Z_1||^2)^m \cdot \frac{1}{(Im(z))^m} \cdot \chi_{||Z_1||^2>2N}]} \leq\]
\begin{equation}\label{sigma3bd}
    \leq\sum_{0 \leq m \leq M}{\frac{\gamma_n^{m}}{(Im(z))^m} \cdot \frac{2}{2^M}} \leq \frac{2(M+1)}{2^M}.
\end{equation}
The last inequality can be justified as follows: \(n \geq n_0\) guarantees \(\gamma_n \leq \frac{1}{4}+\gamma<1+4\gamma \leq Im(z),\) which alongside (\ref{bddmomm}) gives that for \(0 \leq m \leq M,\)
\[\mathbb{E}[(\frac{1}{N}||Z_1||^2)^m \cdot \chi_{\frac{1}{N}||Z_1||^2>2}] \leq \frac{1}{2^{M}} \cdot \mathbb{E}[(\frac{1}{N}||Z_1||^2)^{m+{M}}] \leq \frac{1+e^{2/9} \cdot 2/9}{2^{M}} \leq \frac{2}{2^{M}}\]
as \(e^{2/9}<e<3<\frac{9}{2}.\)

\subsection{High Moments}\label{subsect3.3}

The goal of this subsection is showing
\[\Sigma_1=\sum_{1 \leq m \leq M}{\mathbb{E}[(\frac{1}{p}Z_1^TR_1Z_1)^m]}=-1+\frac{1}{1-\gamma_n\mathbb{E}[m_{F_n}(z)]}+o(1),\]
which will conclude the proof of Proposition~\ref{propstep2}. Begin by noting that it suffices to prove that
\begin{equation}\label{r1eq}
    \Sigma_1=\sum_{1 \leq m \leq M}{\mathbb{E}[(\frac{1}{p}Z_1^TR_1Z_1)^m]}=-1+\frac{1}{1-\mathbb{E}[\frac{1}{p}tr(R_1)]}+o(1):
\end{equation}
(\ref{res1}) and (\ref{res2}) provide
\[|tr((zI-S)^{-1})-tr(R_1)|=|\frac{\frac{1}{p}Z_1^T(zI-S)^{-2}Z_1}{1+\frac{1}{p}Z_1^T(zI-S)^{-1}Z_1}| \leq \frac{1}{Im(z)},\]
whereby for \(n \geq n_0,\)
\[|\frac{1}{1-\gamma_n\mathbb{E}[m_{F_n}(z)]}-\frac{1}{1-\mathbb{E}[\frac{1}{p}tr(R_1)]}| \leq \frac{\mathbb{E}[|\frac{1}{p}tr((zI-S)^{-1})-\frac{1}{p}tr(R_1)|]}{(1-\frac{\gamma_n}{Im(z)})^2} \leq \frac{\frac{1}{p} \cdot \frac{1}{Im(z)}}{(1-\frac{1}{4})^2}\]
by using \(\frac{1}{N}|tr(R_1)|,\frac{1}{N}|tr((zI-S)^{-1})| \leq \frac{1}{Im(z)},\) and so
\[\frac{1}{1-\gamma_n\mathbb{E}[m_{F_n}(z)]}=\frac{1}{1-\mathbb{E}[\frac{1}{p}tr(R_1)]}+o(1).\]
\par
Fix \(1 \leq m \leq M,\) and consider
\begin{equation}\label{sumexp}
    \mathbb{E}[(\frac{1}{p}Z_1^TR_1Z_1)^m]=\gamma_n^m \cdot N^{-m}\sum_{1 \leq i_1,i_2,\hspace{0.05cm}...\hspace{0.05cm},i_{2m} \leq N}{\mathbb{E}[R_{i_1i_2}R_{i_3i_4}...R_{i_{2m-1}i_{2m}}] \cdot \mathbb{E}[Z_{1i_1}Z_{1i_2}...Z_{1i_{2m}}]},
\end{equation}
where for ease of notation \(R_{kl}:=(R_1)_{kl},\) \(1 \leq k,l \leq N\) (the equality follows by conditioning on \(\sigma(Z_{j},j>1),\) a \(\sigma\)-algebra independent of \(\sigma(Z_1),\) and the tower property). It is shown next that
\begin{equation}\label{indbd}
    \mathbb{E}[(\frac{1}{N}Z_1^TR_1Z_1)^m]=\mathbb{E}[(\frac{1}{N}tr(R_1))^m]+(Im(z))^{-m} \cdot [O((16Cm)^{m} \cdot (\frac{d}{n^{1/2}})^{1/3})+O(m^2 \cdot \frac{d^2}{n})]:
\end{equation}
this suffices for the derivation of (\ref{r1eq}) because it will render
\[\Sigma_1=\sum_{1 \leq m \leq M}{\mathbb{E}[(\frac{1}{p}tr(R_1))^m]}+\sum_{1 \leq m \leq M}{(\frac{\gamma_n}{Im(z)})^m \cdot [O((16Cm)^{m} \cdot (\frac{d}{n^{1/2}})^{1/3})+O(m^2 \cdot \frac{d^2}{n})]}=\]
\[=-1+\mathbb{E}[\frac{1}{1-\frac{1}{p}tr(R_1)}]-\mathbb{E}[\frac{(\frac{1}{p}tr(R_1))^{M+1}}{1-\frac{1}{p}tr(R_1)}]+[O((16CM)^{M} \cdot (\frac{d}{n^{1/2}})^{1/3})+O(\frac{d^2}{n})]\]
via \(\frac{1}{p}|tr(R_1)| \leq \gamma_n \cdot ||R_1|| \leq \frac{\gamma_n}{1+4\gamma} \leq \frac{1}{4}\) for \(n \geq n_0,\) \(M \to \infty,\) identity (\ref{powser}), and \(\frac{(m+1)^2}{m^2} \leq \frac{9}{4}<4\) for \(m \geq 2.\) The last two terms are \(o(1)\) by the definition of \(M\) and \(\frac{d}{n^{1/2}}=o(1)\) (see (\ref{propm}) and (\ref{condd})), rendering 
(\ref{r1eq}), 
\[\Sigma_1=-1+\mathbb{E}[\frac{1}{1-\frac{1}{p}tr(R_1)}]+o(1)=-1+\frac{1}{1-\mathbb{E}[\frac{1}{p}tr(R_1)]}+o(1)\]
insomuch as
\[|\mathbb{E}[\frac{1}{1-\frac{1}{p}tr(R_1)}]-\frac{1}{1-\mathbb{E}[\frac{1}{p}tr(R_1)]}| \leq \mathbb{E}[|\frac{1}{1-\frac{1}{p}tr(R_1)}-\frac{1}{1-\mathbb{E}[\frac{1}{p}tr(R_1)]}|] \leq \frac{\mathbb{E}[|\frac{1}{p}tr(R_1)-\mathbb{E}[\frac{1}{p}tr(R_1)]|]}{(1-\frac{1}{4})^2}=o(1)\]
for \(n \geq n_0,\) the last claim following from (\ref{bd4th}) and Hölder's inequality via \(\frac{1}{p}tr(R_1) \overset{d}{=} \frac{p-1}{p} \cdot m_{F^*_{n}}(z),\) where \(F^*_{n}=F_{\frac{1}{p-1}\sum_{1 \leq j \leq p-1}{Z_jZ_j^T}}\) is the empirical spectral distribution corresponding to \(p-1\) i.i.d. samples, or alternatively, by using the dominated convergence theorem, \(\frac{1}{p}|tr(R_1)| \leq \frac{1}{4},\) and (\ref{as}).
\vspace{0.2cm}
\par
It remains to justify (\ref{indbd}). Some terminology that captures the dependencies among the factors underlying the expectations in it is in order. Call \(2l-1,2l\) \textit{counterparts} when \(l \in \mathbb{N},\) while for \(1 \leq j_1,j_2 \leq N,\) say \(j_1,j_2\) are \textit{adjacent} if under the (fixed) bijection \(b_{d,n}\) used to label the entries of \(Z_1\) with the elements of
\[\{(r_1,r_2,\hspace{0.05cm}...\hspace{0.05cm},r_d): 1 \leq r_1<r_2<...<r_d \leq n\},\] 
\(j_1\) and \(j_2\) share some index, i.e.,
\[b_{d,n}(j_1)=(r_1,r_2,\hspace{0.05cm}...\hspace{0.05cm},r_d), \hspace{0.3cm} b_{d,n}(j_2)=(r'_1,r'_2,\hspace{0.05cm}...\hspace{0.05cm},r'_d), \hspace{0.3cm} \{r_1,r_2,\hspace{0.05cm}...\hspace{0.05cm},r_d\} \cap \{r'_1,r'_2,\hspace{0.05cm}...\hspace{0.05cm},r'_d\} \ne \emptyset.\] 
Lastly, for a tuple \((i_j)_{1 \leq j \leq 2m}\) and \(1 \leq v \leq 2m,\) let \(shdeg(i_v)\) (the shared degree of \(i_v\)) be the number of positions in the tuple underlying \(i_v\) that appear in some \(i_w\) for \(w \ne v\) and \(w\) not the counterpart of \(v:\) i.e., when \(b_{d,n}(i_w)=(r_{wj})_{1 \leq j \leq d},\) the shared degrees are given by
\[shdeg(i_v)=|\{r_{v1},r_{v2},\hspace{0.05cm}...\hspace{
0.05cm},r_{vd}\} \cap \{r_{y1},r_{y2},\hspace{0.05cm}...\hspace{
0.05cm},r_{yd}: 1 \leq y \leq 2m, y \not \in S(v)\}| \hspace{0.5cm} (1 \leq v \leq 2m),\]
where \(S(t)=\{2 \lceil \frac{t+1}{2} \rceil-1, 2 \lceil \frac{t+1}{2} \rceil\}\) (i.e., \(S(t)\) is the set consisting of \(t \in \mathbb{N}\) and its counterpart). Intuitively the first term in (\ref{indbd}) is given by tuples \((i_j)_{1 \leq j \leq 2m}\) whose shared degrees vanish and in which entries corresponding to counterparts are equal, i.e., 
\[i_{2l-1}=i_{2l} \hspace{0.2cm} (1 \leq l \leq m), \hspace{0.5cm} shdeg(i_v)=0 \hspace{0.2cm} (1 \leq v \leq 2m).\]
The former gives rise to products of diagonal entries of \(R_1,\) while the latter implies \((Z_{1i_{2l-1}})_{1 \leq l \leq m}\) are independent, whereby 
\[\mathbb{E}[Z_{1i_1}Z_{1i_2}...Z_{1i_{2m}}]=\prod_{1 \leq l \leq m}{\mathbb{E}[Z^2_{1i_{2l-1}}]}=1.\] 
The argument below shows this is indeed the case: notice the contribution of such configurations is not exactly \(\mathbb{E}[(\frac{1}{N}tr(R_1))^m]\) due to the second constraint, \(shdeg(i_v)=0\) for all \(1 \leq v \leq 2m.\) 
\vspace{0.3cm}
\par 
Return to the sum in (\ref{indbd}). A change of summation is used, the key parameters being the shared degrees \((shdeg(i_v))_{1 \leq v \leq 2m}.\) The tuples with \(shdeg(i_{2l-1}) \ne shdeg(i_{2l})\) for some \(1 \leq l \leq m\) have vanishing contributions: \(Z_{1i_1}Z_{1i_2}...Z_{1i_{2m}},\) when written as a product of \(2md\) factors among \(x_1,x_2,\hspace{0.05cm}...\hspace{0.05cm},x_n,\) will have a monomial of degree \(1\) from a position \(s \in \{1,2,\hspace{0.05cm}...\hspace{0.05cm},n\}\) such that \(s\) that appears exactly in one of
the complements of the sets underlying \(shdeg(i_{2l-1}),shdeg(i_{2l})\) with respect to the sets formed by the entries of \(b_{d,n}(i_{2l-1}),b_{d,n}(i_{2l}),\) respectively,
and so it suffices to consider solely those with \(shdeg(i_{2l-1})= shdeg(i_{2l})\) for all \(1 \leq l \leq m.\) This entails that \(k=\frac{1}{2}\sum_{1 \leq v \leq 2m}{shdeg(i_v)} \in \mathbb{Z}_{\geq 0},\) and the number of such tuples is at most 
\[|\{(y_1,y_2,\hspace{0.05cm}...\hspace{0.05cm},y_m) \in \mathbb{Z}_{\geq 0}^m:y_1+...+y_m=k\}|=\binom{k+m-1}{m-1}.\] 
Tuples with fixed shared degrees \((shdeg(i_v))_{1 \leq v \leq 2m}\) contribute to the sum of interest
\[N^{-m}\sum_{1 \leq i_1,i_2,\hspace{0.05cm}...\hspace{0.05cm},i_{2m} \leq N}{\mathbb{E}[R_{i_1i_2}R_{i_3i_4}...R_{i_{2m-1}i_{2m}}] \cdot \mathbb{E}[Z_{1i_1}Z_{1i_2}...Z_{1i_{2m}}]}\]
at most
\begin{equation}\tag{\(fshdeg\)}\label{fixedcontr}
    (Im(z))^{-m} \cdot (\frac{d^2 \cdot 2^{14m}}{n})^{\frac{1}{12}\sum_{1 \leq v \leq 2m}{shdeg(i_v)}} \cdot (\mathbb{E}[x^{2m}])^{\frac{1}{2}\sum_{1 \leq v \leq 2m}{shdeg(i_v)}}.
\end{equation}
This identity is justified in the next subsection. Before delving into its proof, conclude the desired bound,~(\ref{indbd}), using it. 
\par
Take first the tuples with \(\sum_{1 \leq w \leq 2m}{shdeg(i_w)}>0:\) (\ref{fixedcontr}) gives that their contribution is at most
\[\sum_{2|k, k \geq 2}{\binom{k/2+m-1}{m-1} \cdot (Im(z))^{-m} \cdot (\frac{d^2 \cdot 2^{14m}}{n})^{\frac{k}{12}} \cdot (\mathbb{E}[x^{2m}])^{\frac{k}{2}}} \leq\]
\[\leq (Im(z))^{-m} \sum_{2|k, k \geq 2}{\binom{k+m-1}{m-1} \cdot (\frac{d^2 \cdot 2^{14m} \cdot (Cm)^{6m}}{n})^{\frac{k}{12}}} \leq\]
\begin{equation}\tag{\(T1\)}\label{t1}
    \leq 2^{m-1}(Im(z))^{-m}\sum_{2|k, k \geq 2}{(\frac{d^2 \cdot 2^{14m+12} \cdot (Cm)^{6m}}{n})^{\frac{k}{12}}}=O(2^{m}(Im(z))^{-m} \cdot (\frac{d^2 \cdot 2^{14m} \cdot (Cm)^{6m}}{n})^{\frac{1}{6}})
\end{equation}
via \(\binom{k+m-1}{m-1} \leq 2^{k+m-1},\) and for \(n \geq n_2,\)
\begin{equation}\label{small}
    \frac{d^2 \cdot 2^{14m+12} \cdot (Cm)^{6m}}{n} <\frac{2^{12} \cdot d^2 \cdot (8CM)^{6M}}{n} \leq  2^{12} \cdot (\frac{d^2}{n})^{1-3/4} \leq \frac{1}{2}
\end{equation}
by (\ref{propm}) and (\ref{condd}). 
\par
Consider now the tuples with \(\sum_{1 \leq w \leq 2m}{shdeg(i_w)}=0,\) and \(\mathbb{E}[Z_{1i_1}Z_{1i_2}...Z_{1i_{2m}}] \ne 0.\) In light of the discussion above, this is tantamount to \(i_{2l-1}=i_{2l},\) and \(i_{2j},i_{2j'}\) not adjacent for any \(1 \leq j<j'\leq m.\) This entails \(\mathbb{E}[Z_{1i_1}Z_{1i_2}...Z_{1i_{2m}}]=1,\) and what is left is
\[N^{-m}\sum_{(*),1 \leq j_1,j_2,\hspace{0.05cm}...\hspace{0.05cm},j_{m} \leq N}{\mathbb{E}[R_{j_1j_1}R_{j_2j_2}...R_{j_{m-1}j_{m}}]},\]
where \((*)\) denotes the additional constraint that \(j_k,j_l\) are not adjacent for any \(k \neq l.\) 
Take a tuple for which this last condition is violated: there must exist \(1 \leq k_1<k_2 \leq m\) such that \(j_{k_1},j_{k_2}\) are adjacent. This yields a contribution that in absolute value is at most
\[\binom{m}{2} \cdot \mathbb{E}[(\frac{1}{N}\sum_{1 \leq j \leq N}{|R_{jj}|})^{m-2} \cdot N^{-2}\sum_{(**),k_1,k_2}{|R_{k_1k_1}R_{k_2k_2}|}] \leq \binom{m}{2} \cdot (Im(z))^{-(m-2)} \cdot (Im(z))^{-2} \cdot \frac{d^2}{n},\]
where \((**)\) denotes the condition of \(k_1,k_2\) being adjacent, via
\[N^{-2}\sum_{(**),k_1,k_2}{|R_{k_1k_1}R_{k_2k_2}|} \leq N^{-1} \cdot (Im(z))^{-1}\max_{1 \leq k_1 \leq N}{\sum_{(**),k_2}{|R_{k_2k_2}|}} \leq\]
\[\leq N^{-1} \cdot (Im(z))^{-1} \cdot \max_{1 \leq k_1 \leq N}{\sqrt{N \cdot \frac{d^2}{n} \cdot \sum_{(**),k_2}{|R_{k_2k_2}|^2}}} \leq N^{-1} \cdot (Im(z))^{-1} \cdot \sqrt{(N \cdot \frac{d^2}{n})^2 \cdot (Im(z))^{-2}}=(Im(z))^{-2} \cdot \frac{d^2}{n}\]
(for any fixed \(k_1,\) there are at most \(d \cdot \binom{n-1}{d-1}=N \cdot \frac{d^2}{n}\) values \(k_2\) adjacent to it). This amounts to the contribution of the tuples with \(\sum_{1 \leq w \leq 2m}{shdeg(i_w)}=0\) being
\begin{equation}\tag{\(T2\)}\label{t2}
    \mathbb{E}[(\frac{1}{N}tr(R_1))^m]+O(m^2 \cdot (Im(z))^{-m} \cdot \frac{d^2}{n}).
\end{equation}
Thus, (\ref{t1}) alongside \(2^{1+14/6}<2^4=16,\) and (\ref{t2}) conclude the proof of (\ref{indbd}), contingent on justifying (\ref{fixedcontr}), a result undertaken next.

\subsection{Fixing Shared Degrees}\label{subsect4}

The goal of this subsection is proving (\ref{fixedcontr}).
\par
Begin by noting that
\begin{equation}\label{wantedz}
    0 \leq \mathbb{E}[Z_{1i_1}Z_{1i_2}...Z_{1i_{2m}}] \leq (\mathbb{E}[x^{2m}])^{\frac{1}{2}\sum_{1 \leq v \leq 2m}{shdeg(i_v)}}:
\end{equation}
symmetry renders that solely the preimages with
\begin{equation}\label{square}
    Z_{1i_1}Z_{1i_2}...Z_{1i_{2m}}=x_{j_1}^{2g'_1} x_{j_2}^{2g'_2} ... x_{j_l}^{2g'_l}
\end{equation}
for some \(g'_1,g'_2,\hspace{0.05cm}...\hspace{0.05cm},g'_l \in \mathbb{N},1 \leq j_1<j_2<\hspace{0.05cm}...\hspace{0.05cm}<j_l \leq n\) make a nonzero contribution. Independence and \(g'_s \leq m\) give
\[\mathbb{E}[Z_{1i_1}Z_{1i_2}...Z_{1i_{2m}}]=\mathbb{E}[x_{j_1}^{2g'_1}...x_{j_l}^{2g'_l}]=\mathbb{E}[x_{j_1}^{2g'_1}] \cdot \mathbb{E}[x_{j_2}^{2g'_2}] \cdot ... \cdot \mathbb{E}[x_{j_l}^{2g'_l}] \leq (\mathbb{E}[x^{2m}])^{t},\]
where \(t=|\{s:1 \leq s \leq l, g'_s \geq 2\}|.\) Finally, observe that
\[t \leq \frac{1}{2}\sum_{1 \leq v \leq 2m}{shdeg(i_v)},\] 
while Hölder's inequality gives \(1=\mathbb{E}[x_1^2] \leq (\mathbb{E}[x_1^{2m}])^{\frac{1}{2m}},\) together amounting to (\ref{wantedz}): for the former, there are at most \(\frac{1}{2}\sum_{1 \leq v \leq 2m}{shdeg(i_v)}\) indices \(j\) such that \(x_{j}\) appears in at least a pair \(Z_{1u},Z_{1v}\) with \(u,v\) not counterparts, and each index \(s\) with \(g'_s \geq 2\) falls within this category (\(g'_s=1\) when the aforesaid condition on \(j_s\) is violated). 
\par
Hence (\ref{fixedcontr}) will be complete once
\begin{equation}\label{wantedbound}
    N^{-m}\sum_{(*),1 \leq i_1,i_2,\hspace{0.05cm}...\hspace{0.05cm},i_{2m} \leq N}{\mathbb{E}[|R_{i_1i_2}R_{i_3i_4}...R_{i_{2m-1}i_{2m}}|]} \leq (Im(z))^{-m} \cdot (\frac{d^2 \cdot 2^{14m}}{n})^{\frac{1}{12}\sum_{1 \leq v \leq 2m}{shdeg(i_v)}}
\end{equation}
is shown to hold, where \((*)\) denotes that the summation is taken over tuples with \((shdeg(i_v))_{1 \leq v \leq 2m}\) fixed and \(\mathbb{E}[Z_{1i_1}Z_{1i_2}...Z_{1i_{2m}}] \ne 0.\) This is justified by (strong) induction on \(m\sum_{1 \leq v \leq 2m}{shdeg(i_v)} \in \mathbb{Z}_{\geq 0}.\)
\par
The base case \(\sum_{1 \leq v \leq 2m}{shdeg(i_v)}=0\) corresponds to \(i_{2l-1}=i_{2l}\) for all \(1 \leq l \leq m\) (otherwise \(\mathbb{E}[Z_{1i_1}Z_{1i_2}...Z_{1i_{2m}}]=\prod_{1 \leq l \leq m}{\mathbb{E}[Z_{1i_{2l-1}}Z_{1i_{2l}}]}=0\)), and the result is immediate:
\[N^{-m}\sum_{(*),1 \leq i_1,i_2,\hspace{0.05cm}...\hspace{0.05cm},i_{2m} \leq N}{\mathbb{E}[|R_{i_1i_2}R_{i_3i_4}...R_{i_{2m-1}i_{2m}}|]} \leq \mathbb{E}[(\frac{1}{N}\sum_{1 \leq j \leq N}{|R_{jj}|})^m] \leq (Im(z))^{-m}\]
insofar as for any Hermitian matrix \(M \in \mathbb{C}^{n \times n},\) \(|M_{jj}| \leq ||M||\) for all \(1 \leq j \leq n.\)
\par
Proceed with the induction step. If \(m=1,\) the result is clear because this entails \(i_1=i_2,\) whereby the above analysis applies. Suppose next \(k=\sum_{1 \leq v \leq 2m}{shdeg(i_v)}>0, m \geq 2.\) Take \(v \in \{1,2,\hspace{0.05cm}...\hspace{0.05cm},2m\}\) with
\[l=shdeg(i_v)=\min{\{shdeg(i_w): 1 \leq w \leq 2m, shdeg(i_w)>0\}},\] 
and consider the contribution of \(S(v)=\{v,v'\}\) to the sum in (\ref{wantedbound}). By discarding \(v,v'\) from the sets underlying the shared degrees, the product \(m\sum_{1 \leq v \leq 2m}{shdeg(i_v)}\) shrinks by at least \(\sum_{1 \leq v \leq 2m}{shdeg(i_v)}>~0,\) implying the induction hypothesis applies to the new tuples once their shared degrees are fixed. Since 
each shared degree decreases by at most \(2shdeg(v),\) there are at most
\[(2shdeg(v)+1)^{\frac{k}{shdeg(v)}} \leq 2^{2k}\] 
possibilities for the shared degrees of the remaining \(2m-2\) vertices (\(2^a \geq a+1\) for \(a \in \mathbb{N},\) and solely positive shared degrees can change, the number of such positions being at most \(\frac{k}{shdeg(v)}\) by the definition of \(v\)), and it suffices to show that once the new shared degrees are fixed, the contribution is at most
\[(Im(z))^{-m} \cdot (\frac{d^2 \cdot 2^{14m-24}}{n})^{\frac{k}{12}}.\]
For each tuple \((i_w)_{1 \leq w \leq 2m, w \not \in S(v)},\) there are at most \(\binom{n-l}{d-l} \cdot d^l \cdot (2m)^l\) possibilities for choosing \(i_v:\) given the rest, the vertex \(i_v\) shares \(l\) of the entries of the tuple \(b_{d,n}(i_v)\) with at most \(l\) of them, and so there are at most \((2m)^l \cdot d^l\) possibilities to choose these \(l\) indices with the rest of \(d-l\) positions being necessarily distinct from these \(l;\) lastly, notice that \(i_{v'}\) is fully determined by \((i_w)_{1 \leq w \leq 2m, w \ne v'}\) since \(Z_{1i_1}Z_{1i_2}...Z_{1i_{2m}}\) must be a square (i.e., (\ref{square}) holds) to ensure its expectation does not vanish. Because
\[\binom{n-l}{d-l} \cdot (2m)^l  \cdot d^l \leq N \cdot (\frac{8d^2m}{n})^l\]
from
\[\frac{\binom{n-l}{d-l}}{\binom{n}{d}} \leq \frac{\frac{n^{d-l}}{(d-l)!}}{\frac{(n-d)^d}{d!}}=\frac{n^{d-l} \cdot \frac{d!}{(d-l)!}}{(n-d)^d} \leq \frac{n^{d-l} \cdot d^{l}}{(n-d)^d} \leq (1+\frac{d}{n-d})^{d-l} \cdot (\frac{2d}{n})^l \leq e^{\frac{d^2}{n-d}} \cdot(\frac{2d}{n})^l\leq (\frac{4d}{n})^l\]
(using that \(d \leq \frac{n}{2}, \frac{d^2}{n-d} \leq \log{2}\) for \(n \geq n_3\)), it follows that for a given tuple \((i_w)_{1 \leq w \leq 2m, w \not \in S(v)},\)
\[\sum_{i_v,i_{v'}}{|R_{i_vi_{v'}}|} \leq \sqrt{N \cdot (\frac{8d^2m}{n})^l  \cdot \sum_{i_v}{|R_{i_vi_{v'}}|^2}} \leq \sqrt{N \cdot (\frac{8d^2m}{n})^l \cdot \frac{N}{(Im(z))^2}}=\frac{N}{Im(z)} \cdot (\frac{8d^2m}{n})^{\frac{l}{2}}\]
by employing that \(v'\) is fully determined by the rest of the tuple, and \(|R_{st}| \leq ||R|| \leq \frac{1}{Im(z)}\) for all \(1 \leq s,t \leq N.\) This and the induction hypothesis lead to an overall bound
\[\frac{1}{Im(z)} \cdot (\frac{8d^2m}{n})^{\frac{l}{2}} \cdot (Im(z))^{-(m-1)} \cdot (\frac{d^2 \cdot 2^{14m-24}}{n})^{\frac{1}{12}(k-6l)} \leq (Im(z))^{-m} \cdot (\frac{d^2 \cdot 2^{14m-24}}{n})^{\frac{k}{12}}\]
from \(8m \leq 2^{14m-24}\) as \(2^{14m-24} \geq 2^{2m}=8 \cdot 2^{2m-3} \geq 8 \cdot (2m-3+1) \geq 8m;\) \(\frac{d^2 \cdot 2^{14m-12}}{n} \leq 1\) for \(n \geq n_4\) (recall (\ref{small})), and 
\[\sum_{1 \leq w \leq 2m, w \not \in S(v)}{shdeg'(i_w)} \geq \sum_{1 \leq w \leq 2m}{shdeg(i_w)}-3shdeg(i_v)-3shdeg(i_v')=k-6l,\]
where \(shdeg'(i_w)\) denotes the shared degree of \(i_w\) upon excluding \(v,v'\) from the sets underlying the shared degrees: for \(u \not \in S(v),\) discarding \(i_v,i_{v'}\) decreases \(shdeg(i_u)\) exactly when there exists \(r \in \{1,2,\hspace{0.05cm}...\hspace{0.05cm},n\}\) that appears in the sets underlying \(b_{d,n}(i_u),b_{d,n}(i_v)\) or \(b_{d,n}(i_u),b_{d,n}(i_{v'})\) but not in any other such set except the one for the counterpart of \(u,\) i.e., in \(b_{d,n}(i_w), w \not \in S(u) \cup S(v):\) hence each \(r \in \{r_{vj},r_{v'j},1 \leq j \leq d\}\) can cause at most a drop of size \(2\) to the new sum of shared degrees. This completes the induction step.

\section{Beyond the Isotropic Case}\label{sectgen}

This section contains the proof of Theorem~\ref{mainth2}. This merges the main result in Bai and Zhou~\cite{baizhou} stated below and universality. 

\begin{theorem}\label{baizhouth}
    [Bai and Zhou~\cite{baizhou}] Suppose \(\tilde{Z} \in \mathbb{C}^{N \times p}\) has i.i.d. columns \((\tilde{Z}_j)_{1 \leq j \leq p} \subset \mathbb{C}^N\) with
    \par
    \((a)\) \(T=T(N)=\mathbb{E}[\tilde{Z}_1\tilde{Z}_1^*]\) having \(\sup_{N \in \mathbb{N}}{||T(N)||}<\infty\) and its empirical spectral distribution converging weakly to a deterministic probability distribution \(H,\) 
    \par
    \((b)\) \(\frac{1}{N^2}\mathbb{E}[|\tilde{Z}_1^{*}B\tilde{Z}_1-tr(TB)|^2]=o(1)\) for any deterministic symmetric matrix \(B \in \mathbb{R}^{N \times N},||B|| \leq 1,\) and
    \par
    \((c)\) \(\lim_{N \to \infty}{\frac{N}{p}}=\gamma \in (0,\infty).\) 
    \par
    Then the empirical spectral distribution of \(\tilde{S}=\frac{1}{p}\tilde{Z}^*\tilde{Z}\) converges weakly to a probability measure \(\mu\) almost surely, and the Stieltjes transform of \(\mu\) satisfies 
    \[m(z)=\int_{\mathbb{R}}{\frac{1}{z-t}d\mu(t)}=\int_{\mathbb{R}}{\frac{1}{z-t(1-\gamma+\gamma z m(z))}dH(t)} \hspace{1cm} (z \in \mathbb{C}^+).\]
\end{theorem}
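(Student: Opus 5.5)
The plan is to run the same three-step Stieltjes transform scheme as in Section~\ref{sectplan}, now for the $N\times N$ matrix $S=\frac1p\tilde Z\tilde Z^*$. Its nonzero spectrum coincides with that of $\tilde S$, and the two Stieltjes transforms are related by
\[
\gamma_n m_{S}(z)=m_{\tilde S}(z)+\frac{\gamma_n-1}{z}.
\]
So it suffices to prove almost sure convergence of $m_S$ and to derive an integral equation for its limit. That equation is then translated into (\ref{steq}). Up to this translation, (\ref{steq}) is the Marchenko--Pastur/Silverstein equation, and its solution is unique in the class described in the footnote.

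\textbf{Steps 1 and 3.} Step 1 is verbatim the martingale argument leading to (\ref{as}). It uses only independence of the columns and the rank-one bound (\ref{res2}), both of which hold here. Step 3 is the density/analyticity argument of the Third Step. A necessary preliminary is tightness, so that subsequential limits are probability measures. Tightness follows from
\[
\mathbb{E}\Big[\tfrac1N tr(S)\Big]=\tfrac1N tr(T)\le \sup_N\|T(N)\|<\infty.
\]
By Step 1, the quantity $\frac1N tr(S)$ is also almost surely bounded along subsequences.

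\textbf{Step 2 (deterministic equivalent).} Set $R=(zI-S)^{-1}$ and $R_j=(zI-S+\frac1p\tilde Z_j\tilde Z_j^*)^{-1}$. Put
\[
b_n=\frac{1}{1-\frac1p\mathbb{E}[tr(TR_1)]}.
\]
I would compare $R$ with $(zI-b_nT)^{-1}$ through the resolvent identity
\[
R-(zI-b_nT)^{-1}=(zI-b_nT)^{-1}(S-b_nT)R.
\]
Next, expand $S=\frac1p\sum_j\tilde Z_j\tilde Z_j^*$ and use Lemma~\ref{linalglemma} in the form
\[
\tilde Z_j^*R=\frac{\tilde Z_j^*R_j}{1-\frac1p\tilde Z_j^*R_j\tilde Z_j}.
\]
Taking $\frac1N tr$ of the difference then gives a sum of terms of the form
\[
\frac{\frac1p\tilde Z_j^*R_j(zI-b_nT)^{-1}\tilde Z_j}{1-\frac1p\tilde Z_j^*R_j\tilde Z_j}-b_n\,\frac1p tr\big(TR(zI-b_nT)^{-1}\big).
\]
Condition on $\sigma(\tilde Z_k,k\ne j)$. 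The matrices $R_j$ and $R_j(zI-b_nT)^{-1}$ are complex symmetric, so their real and imaginary parts are real symmetric matrices. After rescaling by their norms (at most $\frac{1}{Im(z)}$ and $\frac{C}{Im(z)^2}$ respectively), hypothesis (b) applies to each part. This replaces the two quadratic forms by $\frac1p tr(TR_j)$ and $\frac1p tr(TR_j(zI-b_nT)^{-1})$ with $L^2$ error $o(1)$. Step 1 and (\ref{res1}) then allow replacing $R_j$ by $R$ and $\frac1p tr(TR_j)$ by its expectation, each at cost $O(\frac1p)+o(1)$. The conclusion is
\[
\mathbb{E}[m_S(z)]=\frac1N tr\big((zI-b_nT)^{-1}\big)+o(1),
\]
where $b_n$ is linked to $\mathbb{E}[m_S]$ by the identity $\frac1p tr(TR)=\gamma_n\big(\ldots\big)$. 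That identity is obtained from (\ref{554}) in the form $zm_S-1=\frac1N\sum_j\big(\ldots\big)$. Passing to the limit with $H$ yields (\ref{steq}) for any subsequential limit.

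\textbf{Main obstacle.} The main obstacle is controlling the random denominators $1-\frac1p\tilde Z_j^*R_j\tilde Z_j$ and the deterministic quantity $b_n$ uniformly. Unlike in the isotropic truncated setting, (\ref{smalllen}) is not available. I would first use the uniform bound
\[
\Big|\frac{1}{1-\frac1p\tilde Z_j^*R_j\tilde Z_j}\Big|\le\frac{|z|}{Im(z)},
\]
which follows from $Im\big(z(1-\frac1p\tilde Z_j^*R_j\tilde Z_j)\big)\ge Im(z)$. I would show the analogous bound for $b_n$ by the same imaginary-part argument applied to $z\big(1-\frac1p\mathbb{E}[tr(TR_1)]\big)$. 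This also gives $\|(zI-b_nT)^{-1}\|\le C(z)$. With these bounds, every error term above is a product of an $o(1)$ term with an $O_z(1)$ term. Finally, uniqueness of the solution of (\ref{steq}) in the prescribed half-plane identifies all subsequential limits, which concludes the proof.
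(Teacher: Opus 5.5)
Your proposal is correct in substance and proves the intended statement, but it is organized differently from the paper's treatment. The paper does not reprove this theorem. It keeps Bai and Zhou's argument, in which the resolvent identity (\ref{3.3corr}) is taken against the \emph{random} matrix $\tilde K=T(1-\frac1p tr(\tilde R_1T))^{-1}$. It then repairs the faulty independence claim by proving (\ref{fixbz}) via (\ref{last}), using concentration of $\frac1p tr(\tilde R_1T)$, a case split $j=1$ versus $j\neq 1$, and the constant $c(\mathcal K,z)$. This lets $\tilde K$ be replaced in each summand by the deterministic $\tilde K_0=T(1-\frac1p\mathbb{E}[tr(\tilde R_1T)])^{-1}$; the estimates on $d_{j,1},d_{j,2},d_{j,3}$ and the two trace identities ($l=0,1$) are then quoted from Bai and Zhou. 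Your $b_nT$ is exactly $\tilde K_0$. By comparing with $(zI-b_nT)^{-1}$ from the outset, you get independence of $R_j(zI-b_nT)^{-1}$ from $\tilde Z_j$ for free, which is precisely where the original argument breaks. This removes the swapping step, and concentration of $\frac1p tr(TR_j)$ is needed only to replace the random denominator by $b_n$. Closing the loop through (\ref{554}) also lets you dispense with the $l=1$ identity. Your route is shorter and self-contained; the paper's is a minimal patch that lets the rest of Bai and Zhou be cited verbatim.

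A few points need tidying.

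First, $R_j(zI-b_nT)^{-1}$ is not complex symmetric, since $R_j$ and $T$ need not commute. For real $\tilde Z_j$, replace $A$ by $\frac12(A+A^T)$; this leaves both $\tilde Z_j^TA\tilde Z_j$ and $tr(TA)$ unchanged. Then apply (b) to its real and imaginary parts, conditionally on the other columns. This is legitimate because (b) for all sequences is equivalent to a bound uniform over $\|B\|\le 1$. For genuinely complex data, (b) for real symmetric $B$ does not suffice; you need Hermitian $B$, as in Bai and Zhou.

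Second, the final translation is neither needed nor correct. (\ref{steq}) is the equation for the $N\times N$ matrix $\frac1p\tilde Z\tilde Z^*$; for $H=\delta_1$ it reduces to (\ref{eqmp}). So the $\tilde Z^*\tilde Z$ in the statement should be read as in Theorem~\ref{mainth2}. Converting to the $p\times p$ companion would give a different equation.

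Third, the link between $b_n$ and $\mathbb{E}[m_S]$ is not an identity for $\frac1p tr(TR)$. Rather, (\ref{554}) gives exactly $\mathbb{E}[(1-\frac1p\tilde Z_1^*R_1\tilde Z_1)^{-1}]=1-\gamma_n+\gamma_n z\,\mathbb{E}[m_S(z)]$, and your denominator estimate shows the left side is $b_n+o(1)$.

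Finally, Step 1 says nothing about $\frac1N tr(S)$, so it does not give almost sure boundedness of that quantity. Tightness is unnecessary if you handle vague limits as in the paper's Third Step. Alternatively, it follows from Step 1 at $z=iy$: $1-Re(iy\,m_S(iy))=\int\frac{t^2}{y^2+t^2}dF_S(t)\ge\frac12 F_S(\{|t|\ge y\})$, while the expectation of the left side is at most $\frac{1}{2y}\cdot\frac1N tr(T)$.
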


\begin{remark}
The Stieltjes transform in this work differs by that in Bai and Zhou~\cite{baizhou} by sign, while in terms of dimensions, \((p,n)\) from the latter correspond to \((N,p)\) here (i.e., there are \(n\) samples of dimension \(p\) in \cite{baizhou}).
\end{remark}


Theorems~\ref{baizhouth} and \ref{mainth2} are alike in spirit, the key difference being the concentration conditions. In the former, this is encompassed by \((b),\) while in the latter, this is a byproduct of the random tensor product model and the restriction \(\frac{d}{n^{1/2}}=o(1).\) It will become more apparent below that these constraints are used to compute expectations that are vital towards showing the Stieltjes transform of the limit satisfies (\ref{steq}).
\par
In the remainder of this section, assume the conditions in Theorem~\ref{mainth2} hold (including (\ref{modelz})). 
Use anew the \(3\)-step strategy employed for Theorem~\ref{mainth}: due to the considerable overlap between the two proofs, solely the differences will be pointed out. Denote by \(\tilde{F}_n\) the empirical spectral distribution of \(\tilde{S},\) let 
\begin{equation}\label{bdtop}
    \mathcal{K}=\sup_{n \in \mathbb{N}}{||T(n)||}<\infty,
\end{equation}
and for \(z \in \mathbb{C}^+\) fixed, take \(\tilde{R}_j=(zI-\tilde{S}+\frac{1}{p}\tilde{Z}_j\tilde{Z}^T_j)^{-1}\) when \(1 \leq j \leq p.\)
\par
\(1.\) The first step, encompassed by (\ref{as}), continues to hold since as discussed in its proof, independence of columns suffice:
\[m_{\tilde{F}_n}(z)-\mathbb{E}[m_{\tilde{F}_n}(z)] \xrightarrow[]{a.s.} 0, \hspace{0.5cm} \forall z \in \mathbb{C}^{+}.\]
\par
\(2.\) The second step, encapsulated by Proposition~\ref{propstep2}, can be modified by adapting the proof of Theorem~\ref{baizhouth}. The thrust of the former result is (\ref{concl2}), which is exploited in the third step to show the desired convergence. In the current case, the limiting probability measure \(\mu\) does not have as neat a description as the Marchenko-Pastur laws do,
the source of the additional complexity being the covariance matrix \(T\) that is no longer the identity matrix. This difficulty can be handled with an ingenious use of the idea in Proposition~\ref{propstep2}, 
\[-1+z\mathbb{E}[m_{F_n}(z)]=-\frac{1}{\gamma_n}+\frac{1}{\gamma_n}\mathbb{E}[\frac{1}{1-\frac{1}{p}Z_1^TR_1Z_1}],\]
derived from
\[-(zI-S)+zI=\frac{1}{p}\sum_{1 \leq j \leq p}{Z_jZ_j^T}\]
(see Lemma~\ref{ledoitpechelemma} for proof). Concretely, the lack of isotropy is bypassed by two uses of such identities in Bai and Zhou~\cite{baizhou}: although the original argument appears to be flawed, it can be corrected as explained in subsection~\ref{4.1} below, while subsection~\ref{4.2} demonstrates how the proof of Proposition~\ref{propstep2} can be modified to justify its following analogue.

\begin{proposition}\label{propstep2e}
     Suppose the assumptions in Theorem~\ref{mainth2} hold. 
     Then 
     \begin{equation}
         \mathbb{E}[m_{\tilde{F}_n}(z)]=m(z)+o(1),
     \end{equation}
     when \(Im(z) \geq 1+4\mathcal{K}\gamma\) and where \(m\) is defined by (\ref{steq}).
\end{proposition}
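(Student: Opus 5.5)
The plan is to follow the Bai--Zhou strategy: derive an approximate version of (\ref{steq}) for \(\mathbb{E}[m_{\tilde{F}_n}(z)]\), and replace their quadratic-form concentration hypothesis \((b)\) by the truncation and power-series machinery of Section~\ref{sectdetails}. Throughout, fix \(z\) with \(Im(z) \geq 1+4\mathcal{K}\gamma\), so that \(\frac{1}{p}|\tilde{Z}_1^T B \tilde{Z}_1| \leq \frac{1}{2}\) on \(\{||Z_1||^2 \leq 2N\}\) whenever \(||B|| \leq \frac{1}{Im(z)}\). This holds because \(\tilde{Z}_1=T^{1/2}Z_1\) gives \(||\tilde{Z}_1||^2 \leq \mathcal{K}||Z_1||^2\), which is the analogue of Lemma~\ref{lemmasmall}.

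\textbf{Deterministic equivalent.} Set \(m_0=\mathbb{E}[m_{\tilde{F}_n}(z)]\) and \(b_n=\frac{1}{1-\frac{1}{p}\mathbb{E}[tr(T\tilde{R}_1)]}\), and compare \((zI-\tilde{S})^{-1}\) with the matrix
\[
K=(zI-b_nT)^{-1}.
\]
The resolvent identity
\[
K-(zI-\tilde{S})^{-1}=K\Big(b_nT-\frac{1}{p}\sum_j \tilde{Z}_j\tilde{Z}_j^T\Big)(zI-\tilde{S})^{-1},
\]
combined with Lemma~\ref{linalglemma}, gives
\[
\frac{1}{N}\mathbb{E}\,tr\big(K-(zI-\tilde{S})^{-1}\big)=\frac{1}{\gamma_n}\,\mathbb{E}\Big[\frac{b_n\frac{1}{N}tr(T\tilde{R}_1K)-\frac{1}{N}\tilde{Z}_1^T\tilde{R}_1K\tilde{Z}_1}{1-\frac{1}{p}\tilde{Z}_1^T\tilde{R}_1\tilde{Z}_1}\Big]+o(1),
\]
where the \(o(1)\) collects the rank-one difference between \(\tilde{R}_1\) and \((zI-\tilde{S})^{-1}\), as in (\ref{res1})--(\ref{res2}). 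A second identity of the same type, namely Lemma~\ref{ledoitpechelemma} applied with \(T\) inserted, relates \(\frac{1}{N}\mathbb{E}\,tr(T(zI-\tilde{S})^{-1})\) to \(b_n\). These two identities are the ones subsection~\ref{4.1} is meant to correct. Together they reduce Proposition~\ref{propstep2e} to two expectation statements:
\[
\mathbb{E}\Big[\frac{1}{1-\frac{1}{p}\tilde{Z}_1^T\tilde{R}_1\tilde{Z}_1}\Big]=b_n+o(1),\qquad
\mathbb{E}\Big[\frac{\frac{1}{N}\tilde{Z}_1^T\tilde{R}_1K\tilde{Z}_1}{1-\frac{1}{p}\tilde{Z}_1^T\tilde{R}_1\tilde{Z}_1}\Big]=b_n\,\mathbb{E}\Big[\frac{1}{N}tr(T\tilde{R}_1K)\Big]+o(1).
\]
Given these, \(m_0\) satisfies (\ref{steq}) up to \(o(1)\). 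Since \(Im(z)\) is large, the map defined by the right side of (\ref{steq}) is a contraction in \(m\), with derivative bounded by a constant below \(1\), exactly as in the \(4\gamma\) computation following (\ref{o1id}). Hence \(m_0=m(z)+o(1)\), with \(m\) the unique solution of (\ref{steq}).

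\textbf{Proving the two expectation statements.} I would split on \(\{||Z_1||^2 \leq 2N\}\). On the complement, Lemma~\ref{tailev} carries over verbatim with an extra factor \(\mathcal{K}\). On the bulk, I would expand \(\frac{1}{1-x}\) to order \(M\) as in (\ref{bulk1}); \(\Sigma_2\) and \(\Sigma_3\) are handled as in (\ref{iibound}) and (\ref{iiibound}). The heart of the matter is the analogue of (\ref{indbd}): for deterministic-given-\(\mathcal{F}_1\) matrices \(B_1,\dots,B_{m+1}\) of norm at most \(\max(1,\mathcal{K})/Im(z)\), one needs
\[
\mathbb{E}\Big[\prod_{s}\frac{1}{N}Z_1^TB_sZ_1\Big]=\mathbb{E}\Big[\prod_s\frac{1}{N}tr(B_s)\Big]+O\Big((16C'm)^m\big(\tfrac{d}{n^{1/2}}\big)^{1/3}\Big).
\]
Here \(B_s=T^{1/2}\tilde{R}_1T^{1/2}\), and the last factor may instead be \(T^{1/2}\tilde{R}_1KT^{1/2}\). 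The shared-degree induction of subsection~\ref{subsect4} used only two facts: \(|R_{st}|\leq||R||\), and the row bound \(\sum_{t}|R_{st}|^2\leq||R||^2\). Both hold for arbitrary matrices of bounded norm, so (\ref{fixedcontr}) should transfer unchanged with mixed \(B_s\). Concentration of \(\frac{1}{N}tr(B_s)\), obtained from step 1 applied to the analogous martingale decomposition, then lets the products factor, and resumming gives \(b_n\).

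\textbf{Main obstacle.} I expect the key difficulty to be checking the base case and the \((**)\) adjacency estimate for non-diagonal-dominant matrices such as \(T^{1/2}\tilde{R}_1T^{1/2}\). Unlike \(R_1\) in the isotropic case, the entries \(|B_{jj}|\) are not controlled by \(\frac{1}{N}tr\) of a positive quantity. My first attempt would be to bound \(\frac{1}{N}\sum_j|B_{jj}|\leq||B||\) and to use Cauchy--Schwarz as in (\ref{t2}); that bound needs only \(|B_{jj}|\leq||B||\), so it should go through. If it does not, the fallback would be to verify the concentration hypothesis \((b)\) of Theorem~\ref{baizhouth} directly on the event \(||Z_1||^2\leq 2N\), via the \(m=2\) moment computation.
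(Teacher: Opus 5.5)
Your route differs from the paper's, and it is workable, but two steps fail as written.

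\textbf{How the routes compare.} The paper keeps Bai--Zhou's random $\tilde K=K(1)$. It repairs it in subsection~\ref{4.1} by passing to the deterministic $\tilde K_0$ through concentration of $\frac1p tr(\tilde R_1T)$. It then proves only the universality statement (\ref{unmomeq}), i.e.\ that $\tilde Z_1$ may be replaced by the Gaussian $y=T^{1/2}y_0$. After that, Bai--Zhou's $d_{j,1},d_{j,2},d_{j,3}$ argument (valid because $y$ satisfies $(b)$) and their subsequence/uniqueness step finish the proof.

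You use the deterministic $K=(zI-b_nT)^{-1}$ from the start. You need only the $l=0$ identity, because your first expectation statement together with Lemma~\ref{ledoitpechelemma} ties $b_n$ to $\mathbb{E}[m_{\tilde F_n}(z)]$. You then compute directly the common first-order term $\mathbb{E}[\prod_s\frac1N tr(B_s)]$, which the paper uses only implicitly (it is why both sides of (\ref{unmomeq}) agree). This removes the Gaussian detour and the $\tilde K$ repair. Both proofs rely equally on extending (\ref{fixedcontr}) to mixed matrices, and you are right that this needs only $|B_{st}|\leq||B||$ and row $\ell^2$ bounds.

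\textbf{The closing contraction fails.} The map $m\mapsto\int\frac{dH(t)}{z-t(1-\gamma+\gamma zm)}$ is not a contraction on $Im(z)\geq 1+4\mathcal{K}\gamma$. The computation after (\ref{o1id}) does not transfer, since it used the explicit Marchenko--Pastur identity and $Im(z)\geq\frac{16}{9}+4\gamma$. Take $T=\mathcal{K}I$ and $m=\frac1z$, the Stieltjes transform of $\delta_0$, which satisfies every a priori bound you have. The derivative there is $\frac{\gamma\mathcal{K}z}{(z-\mathcal{K})^2}$. With $\gamma\mathcal{K}=1$ and $z=\mathcal{K}+5i$, its modulus exceeds $\frac{\mathcal{K}}{25}$. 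So your argument does not show that an $o(1)$-approximate solution is close to $m(z)$.

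Replace this step with what the paper imports from Bai--Zhou:
\begin{itemize}
\item $|\mathbb{E}[m_{\tilde F_n}(z)]|\leq\frac{1}{Im(z)}$, so subsequential limits exist.
\item Along any convergent subsequence, the limit solves (\ref{steq}) exactly. This follows by letting $H_n\to H$ and $\gamma_n\to\gamma$, using $Im(z-tb)\geq Im(z)$, which holds because $Im(b)\leq 0$ both for $b=b_n$ and for $b=1-\gamma_n+\gamma_nz\mathbb{E}[m_{\tilde F_n}(z)]$.
\item Uniqueness of the solution of (\ref{steq}) in the appropriate class identifies the limit as $m(z)$.
\end{itemize}
Taking $Im(z)$ large relative to $\mathcal{K}$ does restore a contraction and would suffice for step 3, but not for the proposition as stated.

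\textbf{Your displayed identity is wrong.} From $K-(zI-\tilde S)^{-1}=K(b_nT-\tilde S)(zI-\tilde S)^{-1}$ and (\ref{multid}) one gets
\[\frac{1}{N}\mathbb{E}[tr(K-(zI-\tilde S)^{-1})]=b_n\mathbb{E}[\frac{1}{N}tr(T\tilde R_1K)]-\mathbb{E}[\frac{\frac{1}{N}\tilde Z_1^T\tilde R_1K\tilde Z_1}{1-\frac{1}{p}\tilde Z_1^T\tilde R_1\tilde Z_1}]+O(\frac{1}{N}).\]
There is no $\gamma_n^{-1}$, and the $b_n$-term carries no denominator. That denominator has mean about $b_n$, not $1$, so it is not a rank-one $o(1)$ correction. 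Keeping it would leave roughly $\gamma_n^{-1}(b_n^2-b_n)\mathbb{E}[\frac1N tr(T\tilde R_1K)]$ rather than $o(1)$. With the correct identity, your second expectation statement is exactly what is needed.

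\textbf{Smaller points.}
\begin{itemize}
\item The tail part of your second statement is not Lemma~\ref{tailev} verbatim. The numerator contributes an extra factor $\frac1N||Z_1||^2$, so you need $\mathbb{E}[(\frac1N||Z_1||^2)^2\chi_{||Z_1||^2>2N}]=o(1)$. This comes from Lemma~\ref{lemmamom}, as in (\ref{ext1}).
\item Your fallback is misphrased, but it is stronger than you suggest. Hypothesis $(b)$ of Theorem~\ref{baizhouth} is unconditional, so checking it only on $\{||Z_1||^2\leq 2N\}$ would not verify it. However, the $m=2$ case of the extended (\ref{indbd}) needs no truncation. Applied to a deterministic $T^{1/2}BT^{1/2}$, it is precisely $(b)$ for $\tilde Z_1$. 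If that extension holds, Theorem~\ref{baizhouth} applies directly.
\end{itemize}
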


\par
\(3.\) The third step remains valid verbatim: the uniqueness of \(\mu\) (justified in Bai and Zhou~\cite{baizhou}: see step \(3\) is section \(3\) therein) and of analytical continuations yields all weak limits are the one described in Theorem~\ref{mainth2}, from which the desired conclusion ensues.
\par
In the rest of this section, subsection~\ref{4.1} corrects the argument for Theorem~\ref{baizhouth} from Bai and Zhou~\cite{baizhou}, and subsection~\ref{4.2} consists of the justification of Proposition~\ref{propstep2e}.

\subsection{A Crucial Identity}\label{4.1}

The issue in Bai and Zhou~\cite{baizhou} in the proof of their main result is related to the definition of the matrix \(K\) in (their) step \(2.\) The authors let 
\[K=T(1-\frac{1}{p}tr(\tilde{R}_jT))^{-1},\] 
which a priori is not well-defined as \(j\) appears to be a generic column index, in this case, an element of \(\{1,2,\hspace{0.05cm}...\hspace{0.05cm},p\},\) and treat this matrix as if it were independent of \(Z_j\) for all \(j,\) 
a property needed at the hour of justifying certain differences remain negligible (i.e., \(d_{j,2},d_{j,3}:\) see also (\ref{diffdef}) in the next subsection). 
\par
To correct this, let 
\[K(j)=T(1-\frac{1}{p}tr(\tilde{R}_jT))^{-1} \hspace{0.2cm} (1 \leq j \leq p), \hspace{0.5cm} \tilde{K}=K(1):\]
notice that \((K(j))_{1 \leq j \leq p}\) are identically distributed with \(\tilde{Z}_j\) and \(K(j)\) independent for all \(1 \leq j \leq p.\) In what follows, solely the real case (\((Z_j)_{1 \leq j \leq p} \subset \mathbb{R}^N\)) is considered insofar as this work focuses exclusively on this situation: however, the argument can be easily adapted to the complex case (\((Z_j)_{1 \leq j \leq p} \subset \mathbb{C}^N\)). 
\par
Recall the reasoning in Bai and Zhou~\cite{baizhou} that leads to the main identity in the section treating step \(2,\) equation \(3.3\) therein ((\ref{3.3corr}) below). Since
\[(zI-\tilde{S})-(zI-\tilde{K})=\tilde{K}-\frac{1}{p}\sum_{1 \leq j \leq p}{\tilde{Z}_j\tilde{Z}_j^T},\]
the resolvent identity \(A^{-1}-B^{-1}=A^{-1}(B-A)B^{-1}\) gives
\[(zI-\tilde{K})^{-1}-(zI-\tilde{S})^{-1}=(zI-\tilde{K})^{-1}\tilde{K}(zI-\tilde{S})^{-1}-\frac{1}{p}\sum_{1 \leq j \leq p}{(zI-\tilde{K})^{-1}\tilde{Z}_j\tilde{Z}_j^T(zI-\tilde{S})^{-1}},\]
which yields together with 
\begin{equation}\label{multid}
    \tilde{Z}_j^T(zI-\tilde{S})^{-1}=\frac{\tilde{Z}_j^T\tilde{R}_j}{1-\frac{1}{p}\tilde{Z}_j^T\tilde{R}_jZ_j},
\end{equation}
the following crucial identity
\begin{equation}\tag{\(BZ\)}\label{3.3corr}
    (zI-\tilde{K})^{-1}-(zI-\tilde{S})^{-1}=(zI-\tilde{K})^{-1}\tilde{K}(zI-\tilde{S})^{-1}-\frac{1}{p}\sum_{1 \leq j \leq p}{\frac{(zI-\tilde{K})^{-1}\tilde{Z}_j\tilde{Z}_j^T\tilde{R}_j}{1-\frac{1}{p}\tilde{Z}_j^T\tilde{R}_j\tilde{Z}_j}}:
\end{equation}
here (\ref{multid}) is a consequence of
\[\tilde{R}_j-(zI-\tilde{S})^{-1}=-\tilde{R}_j\frac{1}{p}\tilde{Z}_j\tilde{Z}^T_j(zI-\tilde{S})^{-1},\]
from which
\[\tilde{Z}_j^T\tilde{R}_j-\tilde{Z}_j^T(zI-\tilde{S})^{-1}=-\frac{1}{p}\tilde{Z}_j^T\tilde{R}_j\tilde{Z}_j \cdot \tilde{Z}^T_j(zI-\tilde{S})^{-1},\]
and note that \(1-\frac{1}{p}\tilde{Z}_j^T\tilde{R}_j\tilde{Z}_j=0\) cannot occur as this would entail \(\tilde{Z}_j^T\tilde{R}_j=0,\) whereby 
\[\tilde{Z}_j=0, \hspace{0.5cm} 0=1-\frac{1}{p}\tilde{Z}_j^T\tilde{R}_j\tilde{Z}_j=1-0=1.\] 
\par
Multiplying (\ref{3.3corr}) on the left by \(T^l\) for \(l \in \{0,1\},\) taking traces of both sides and dividing by \(N\) leads to
\begin{equation}\tag{\(BZ_l\)}\label{bzl}
    \frac{1}{N}tr(T^l(zI-\tilde{K})^{-1})-\frac{1}{N}tr(T^l(zI-\tilde{S})^{-1})=\frac{1}{N}tr(T^l(zI-\tilde{K})^{-1}\tilde{K}(zI-\tilde{S})^{-1})-\frac{1}{N}\sum_{1 \leq j \leq p}{\frac{\frac{1}{p}\tilde{Z}_j^T\tilde{R}_jT^l(zI-\tilde{K})^{-1}\tilde{Z}_j}{1-\frac{1}{p}\tilde{Z}_j^T\tilde{R}_jZ_j}}.
\end{equation}
Bai and Zhou~\cite{baizhou} argue next that the expectation of the right-hand side tends to \(0\) as \(N \to \infty:\) however, this relies on \(\tilde{R}_jT^l(zI-\tilde{K})^{-1}\) being independent of \(\tilde{Z}_j,\) which is not guaranteed for all column indices \(j.\) 
This convergence subsequently yields
\[\mathbb{E}[\frac{1}{N}tr(T^l(zI-\tilde{K})^{-1})]-\mathbb{E}[\frac{1}{N}tr(T^l(zI-\tilde{S})^{-1})]=o(1) \hspace{1cm} (l \in \{0,1\}),\]
from which the desired equation (\ref{steq}) is derived. The problem regarding the expectation of the right-hand side term of (\ref{bzl}), rooted in the definition of \(K,\) can be resolved with the aid of concentration: since \((K(j))_{1 \leq j \leq p}\) are identically distributed, it suffices to show
\begin{equation}\label{fixbz}
    \mathbb{E}[\frac{\frac{1}{N}\tilde{Z}_j^T\tilde{R}_jT^l(zI-\tilde{K})^{-1}\tilde{Z}_j}{1-\frac{1}{p}\tilde{Z}_j^T\tilde{R}_jZ_j}-\frac{\frac{1}{N}\tilde{Z}_j^T\tilde{R}_jT^l(zI-\tilde{K}_0)^{-1}\tilde{Z}_j}{1-\frac{1}{p}\tilde{Z}_j^T\tilde{R}_jZ_j}]=o(1)
\end{equation}
for \(\tilde{K}_0=T(1-\frac{1}{p}\mathbb{E}[tr(\tilde{R}_jT)])^{-1}\) (notice that this matrix is independent of \(j\)), insofar as this will entail that after taking expectations of both sides in (\ref{bzl}), \(\tilde{K}\) can be replaced by \(K(j)\) in the \(j^{th}\) summand on the right 
due to the \(j^{th}\) term being
\[\mathbb{E}[\frac{\frac{1}{N}\tilde{Z}_1^T\tilde{R}_1T^l(zI-\tilde{K}_0)^{-1}\tilde{Z}_1}{1-\frac{1}{p}\tilde{Z}_1^T\tilde{R}_1Z_1}]+o(1).\]
\vspace{0.3cm}
\par
Identity (\ref{fixbz}) is the goal for the remainder of this subsection. As stated in Bai and Zhou~\cite{baizhou},
\begin{equation}\label{prodinv}
    \frac{1}{|1-\frac{1}{p}\tilde{Z}_j^T\tilde{R}_j\tilde{Z}_j|} \leq \frac{|z|}{Im(z)},
\end{equation}
whereby showing 
\[\mathbb{E}[\frac{1}{N}|\tilde{Z}_j^T\tilde{R}_jT^l((zI-\tilde{K})^{-1}-(zI-\tilde{K}_0)^{-1})\tilde{Z}_j|]=o(1)\]
is enough for concluding (\ref{fixbz}): (\ref{prodinv}) is a consequence of the inequality below bound \(3.4\) in \cite{baizhou}, which entails 
\begin{equation}\label{consbaiz}
    Im(-z \cdot v^*\tilde{R}_jv) \geq 0
\end{equation}
for all random vectors \(v \in \mathbb{C}^n\) independent of \(\tilde{R}_j\) insomuch as (\ref{consbaiz}) then gives
\[Im(z) \leq Im(z(1-v^*\tilde{R}_jv)) \leq |z| \cdot |1-v^*\tilde{R}_jv|.\]
\par
Start with
\[\frac{1}{N}|\tilde{Z}_j^T\tilde{R}_jT^l((zI-\tilde{K})^{-1}-(zI-\tilde{K}_0)^{-1})\tilde{Z}_j| \leq\]
\begin{equation}\label{id1}
    \leq \frac{1}{N}||\tilde{Z}_j||^2 \cdot \frac{1}{Im(z) \cdot c^2(\mathcal{K},z)} \cdot ||T||^{l+1} \cdot |(1-\frac{1}{p}tr(\tilde{R}_jT))^{-1}-(1-\frac{1}{p}\mathbb{E}[tr(\tilde{R}_jT)])^{-1}|,
\end{equation}
where 
\[c(\alpha,z)=\min_{(r,\lambda):Im(z/r) \geq Im(z),\lambda \in [0,\alpha]}{|z-r\lambda|} \in (0,\infty) \hspace{0.5cm} (\alpha>0):\]
note that continuity and compactness (\(Im(z/r) \leq \frac{|z|}{|r|}\) entails \(|r| \leq \frac{|z|}{Im(z)},\) and the absolute value of interest can be seen as a function defined on \(\mathbb{R}^3\)) give the minimum exists and is attained at some \((r_0,\lambda_0)\) and \(|z-r_0\lambda_0|>0\) (else, \(z=r_0\lambda_0,\) whereby \(r_0 \ne 0\) from \(z \ne 0,\) and so \(Im(z/r_0)=Im(\lambda_0)=0,\) contradicting \(Im(z/r_0) \geq Im(z)\)).
To justify (\ref{id1}), notice the left-hand side term is at most
\[\frac{1}{N}||\tilde{Z}_j||^2 \cdot ||\tilde{R}_jT^l((zI-\tilde{K})^{-1}-(zI-\tilde{K}_0)^{-1})|| \leq \frac{1}{N}||\tilde{Z}_j||^2 \cdot ||\tilde{R}_j|| \cdot ||T||^l \cdot ||(zI-\tilde{K})^{-1}-(zI-\tilde{K}_0)^{-1}||\]
and it is enough to show
\[||(zI-\tilde{K})^{-1}-(zI-\tilde{K}_0)^{-1}|| \leq \frac{||T||}{c^2(\mathcal{K},z)} \cdot |(1-\frac{1}{p}tr(\tilde{R}_1T))^{-1}-(1-\frac{1}{p}\mathbb{E}[tr(\tilde{R}_1T)])^{-1}|.\]
This last claim can be justified as follows: for \(a,b \in \mathbb{C},\)
\[(zI-aT)^{-1}-(zI-bT)^{-1}=U^Tdiag((\frac{\lambda_s(a-b)}{(z-a\lambda_s)(z-b\lambda_s)})_{1 \leq s \leq N})U\] 
for a spectral decomposition \(T=U^Tdiag((\lambda_s)_{1 \leq s \leq N})U,\) and
\[|\frac{\lambda_s(a-b)}{(z-a\lambda_s)(z-b\lambda_s)}| \leq ||T|| \cdot |a-b| \cdot \frac{1}{\min_{1 \leq r \leq N}{|z-a\lambda_r|} \cdot \min_{1 \leq r \leq N}{|z-b\lambda_r|}} \leq \frac{||T|| \cdot |a-b|}{c^2(\mathcal{K},z)}\]
when \(a=(1-\frac{1}{p}tr(\tilde{R}_jT))^{-1},b=(1-\frac{1}{p}\mathbb{E}[tr(\tilde{R}_jT)])^{-1}\) by using that \(Im(z/a)=Im(z(1-\frac{1}{p}tr(\tilde{R}_jT)))\geq Im(z)\) (the bound ensues from \(T\) being positive semidefinite and (\ref{consbaiz})) with linearity of expectation entailing \(Im(z/b) \geq Im(z).\) 
\par
Next, as mentioned above, 
\begin{equation}\label{anorm}
    \frac{1}{|1-\frac{1}{p}tr(\tilde{R}_1T)|} \leq \frac{|z|}{Im(z)},
\end{equation}
whereby (\ref{id1}) can be changed to
\[\frac{1}{N}|\tilde{Z}_j^T\tilde{R}_jT^l((zI-\tilde{K})^{-1}-(zI-\tilde{K}_0)^{-1})\tilde{Z}_j| \leq \frac{||T||^{l+1} \cdot |z|^2}{(Im(z))^3 \cdot c^2(\mathcal{K},z)} \cdot \frac{1}{N}||\tilde{Z}_j||^2 \cdot |\frac{1}{p}tr(\tilde{R}_1T)-\frac{1}{p}\mathbb{E}[tr(\tilde{R}_1T)]|.\]
To conclude (\ref{fixbz}), by virtue of (\ref{bdtop}), it suffices to show that for \(1 \leq j \leq p,\)
\begin{equation}\label{last}
    \mathbb{E}[\frac{1}{N}||\tilde{Z}_j||^2 \cdot |\frac{1}{p}tr(\tilde{R}_1T)-\frac{1}{p}\mathbb{E}[tr(\tilde{R}_1T)]|]=o(1).
\end{equation}
\par
If \(j=1,\) then (\ref{last}) follows from independence and Hölder's inequality,
\[\mathbb{E}[\frac{1}{N}||\tilde{Z}_1||^2 \cdot |\frac{1}{p}tr(\tilde{R}_1T)-\frac{1}{p}\mathbb{E}[tr(\tilde{R}_1T)]|]=\mathbb{E}[\frac{1}{N}||\tilde{Z}_1||^2] \cdot \mathbb{E}[|\frac{1}{p}tr(\tilde{R}_1T)-\frac{1}{p}\mathbb{E}[tr(\tilde{R}_1T)]|] \leq\]
\[\leq ||T|| \cdot (\frac{16||T||^{4}}{(Im(z))^4 \cdot p^{2}})^{1/4}=O(p^{-1/2})=o(1)\]
since an analogous rationale to the one in step \(1\) (i.e., the proof of (\ref{as})) 
alongside lemma \(2.6\)\footnote{For \(v \in \mathbb{C}^N,\) \(A,B \in \mathbb{C}^{N \times N}.\)} in Bai and Silverstein~\cite{silv2},
\begin{equation}\label{2.6}
    |tr((zI-B)^{-1}A-(zI-B+vv^*)^{-1}A)| \leq \frac{||A||}{Im(z)}
\end{equation}
give
\[\mathbb{E}[|\frac{1}{p}tr(\tilde{R}_jT)-\frac{1}{p}\mathbb{E}[tr(\tilde{R}_jT)]|^{4}] \leq \frac{16p^2 \cdot ||T||^{4}}{(Im(z))^4 \cdot p^{4}}.\]

\par
Else \(j \ne 1,\) and define \(\tilde{R}_{j1}=(zI-\tilde{S}+\frac{1}{p}\tilde{Z}_1\tilde{Z}_1^T+\frac{1}{p}\tilde{Z}_j\tilde{Z}_j^T)^{-1}:\) then
\[\mathbb{E}[\frac{1}{N}||\tilde{Z}_j||^2 \cdot |\frac{1}{p}tr(\tilde{R}_1T)-\frac{1}{p}\mathbb{E}[tr(\tilde{R}_1T)]|] \leq \mathbb{E}[\frac{1}{N}||\tilde{Z}_j||^2 \cdot |\frac{1}{p}tr(\tilde{R}_{j1}T)-\frac{1}{p}\mathbb{E}[tr(\tilde{R}_{j1}T)]|]+\]
\[+\mathbb{E}[\frac{1}{N}||\tilde{Z}_j||^2 \cdot |\frac{1}{p}tr(\tilde{R}_{1}T)-\frac{1}{p}tr(\tilde{R}_{j1}T)|]+\mathbb{E}[\frac{1}{N}||\tilde{Z}_j||^2 \cdot |\frac{1}{p}\mathbb{E}[tr(\tilde{R}_{1}T)]-\frac{1}{p}\mathbb{E}[tr(\tilde{R}_{j1}T)]|]:=I_e+II_e+III_e,\]
from which (\ref{last}) ensues insofar as by a similar rationale as to that for \(j=1,\) \(I_e=O(p^{-1/2}),\) and two applications of (\ref{2.6}) give
\[II_e+III_e \leq 2 \cdot \frac{||T||}{Im(z)} \cdot \mathbb{E}[\frac{1}{Np}||\tilde{Z}_j||^2] \leq  \frac{2||T||^2}{Im(z)} \cdot \frac{1}{p}.\]

\subsection{Swapping Random Variables}\label{4.2}

Return now to the model in Theorem~\ref{mainth2}. Condition \((b)\) in Theorem~\ref{baizhouth} is used when showing that
\[\mathbb{E}[|\frac{1}{p}\tilde{Z}_j^T\tilde{R}_jT^l(zI-\tilde{K}(j))^{-1}\tilde{Z}_j-(1-\frac{1}{p}\tilde{Z}_j^T\tilde{R}_jZ_j)\cdot \frac{1}{p}tr(T^l(zI-\tilde{K}(j))^{-1}\tilde{K}(j)(zI-\tilde{S})^{-1})|]=o(1)\]
for \(1 \leq j \leq p, l \in \{0,1\},\) from which the conclusion in Proposition~\ref{propstep2e} is deduced for \(z \in \mathbb{C}^{+}.\) Namely,
\begin{equation}\label{diffdef}
    \frac{1}{p}\tilde{Z}_j^T\tilde{R}_jT^l(zI-\tilde{K}(j))^{-1}\tilde{Z}_j-(1-\frac{1}{p}\tilde{Z}_j^T\tilde{R}_jZ_j)\cdot \frac{1}{p}tr(T^l(zI-\tilde{K}(j))^{-1}\tilde{K}(j)(zI-\tilde{S})^{-1}):=d_{j,1}+d_{j,2}+d_{j,3}
\end{equation}
for
\[d_{j,1}=\frac{1}{p}tr(\tilde{R}_j(zI-\tilde{K}(j))^{-1}T^{l+1})-\frac{1}{p}tr((zI-\tilde{S})^{-1}(zI-\tilde{K}(j))^{-1}T^{l+1}),\]
\[d_{j,2}=\frac{1}{p}\tilde{Z}_j^T\tilde{R}_jT^l(zI-\tilde{K}(j))^{-1}\tilde{Z}_j-\frac{1}{p}tr(\tilde{R}_j(zI-\tilde{K}(j))^{-1}T^{l+1}),\]
\[d_{j,3}=\frac{1}{p}tr((zI-\tilde{S})^{-1}(zI-\tilde{K}(j))^{-1}T^{l+1}) \cdot (1-\frac{1-\frac{1}{p}\tilde{Z}_j^T\tilde{R}_jZ_j}{1-\frac{1}{p}tr(\tilde{R}_jT)})\]
by employing that \(\tilde{K}(j)\) and \(T\) commute with condition \((b)\) applied to show \(\mathbb{E}[|d_{j,2}|]=o(1),\mathbb{E}[|d_{j,3}|]=o(1).\) 
\par
In the current case, condition \((a)\) in Theorem~\ref{baizhouth} remains valid (and condition \((c)\) is immediate) since
\[\mathbb{E}[\tilde{Z}_1\tilde{Z}_1^T]=\mathbb{E}[T^{1/2}Z_0Z_0^TT^{1/2}]=T^{1/2}\mathbb{E}[Z_0Z_0^T]T^{1/2}=T^{1/2}T^{1/2}=T,\]
and proving that under the conditions in Theorem~\ref{mainth2} and \(Im(z) \geq 1+4\mathcal{K}\gamma,\) 
\begin{equation}\tag{\(U\)}\label{unmomeq}
    \mathbb{E}[\frac{\frac{1}{p}\tilde{Z}_1^T\tilde{R}_1T^l(zI-\tilde{K})^{-1}\tilde{Z}_1}{1-\frac{1}{p}\tilde{Z}_1^T\tilde{R}_1\tilde{Z}_1}]=\mathbb{E}[\frac{\frac{1}{p}y^T\tilde{R}_1T^l(zI-\tilde{K})^{-1}y}{1-\frac{1}{p}y^T\tilde{R}_1y}]+o(1)
\end{equation}
for \(y \in \mathbb{R}^N,y=T^{1/2}y_0, y_0 \overset{d}{=}N(0,I),\) \(y_0\) independent of \((Z_j)_{1 \leq j \leq p},\) is enough insofar as condition \((b)\) holds for \(y:\)
\[\frac{1}{N^2}\mathbb{E}[|y^{T}By-tr(TB)|^2]=\frac{1}{N^2}\mathbb{E}[|y_0^{T}T^{1/2}BT^{1/2}y_0-tr(T^{1/2}BT^{1/2})|^2]=\]
\[=\frac{1}{N^2}\sum_{1 \leq j \leq N}{(3-2\cdot 1+1)\cdot (T^{1/2}BT^{1/2})^2_{jj}} \leq \frac{2N \cdot ||T^{1/2}BT^{1/2}||^2}{N^2} \leq \frac{2||T||^2}{N}\]
for any deterministic symmetric matrix \(B \in \mathbb{R}^{N \times N}\) with \(||B|| \leq 1.\) Thus, the rest of the argument in Theorem~\ref{baizhouth} can be used to complete Proposition~\ref{propstep2e}: the remainder of this subsection derives (\ref{unmomeq}).
\vspace{0.5cm}
\par
Identity (\ref{unmomeq}) can be justified in the same vein as Proposition~\ref{propstep2} was. Suppose \(Im(z) \geq 1+4\mathcal{K}\gamma,\) and use the same truncation, i.e.,
\[1=\chi_{||Z_j||^2>2N}+\chi_{||Z_j||^2 \leq 2N}.\]
\par
\((i)\) Consider first the tail and reason as in Lemma~\ref{tailev}: identity (\ref{eq1234}) holds in this case as well, giving
\[\frac{1}{1-\frac{1}{p}\tilde{Z}_j^T\tilde{R}_j\tilde{Z}_j}=1+\frac{1}{p}\tilde{Z}_j^T(zI-\tilde{S})^{-1}\tilde{Z}_j,\]
from which
\[|\frac{\frac{1}{p}\tilde{Z}_1^T\tilde{R}_1T^l(zI-\tilde{K})^{-1}\tilde{Z}_1}{1-\frac{1}{p}\tilde{Z}_1^T\tilde{R}_1\tilde{Z}_1}| \leq (1+\frac{1}{p}||\tilde{Z}_1||^2 \cdot \frac{1}{Im(z)}) \cdot \frac{1}{p}||\tilde{Z}_1||^2 \cdot ||\tilde{R}_1T^l(zI-\tilde{K})^{-1}|| \leq\]
\[\leq (1+\frac{||T||}{p} \cdot ||Z_1||^2 \cdot \frac{1}{Im(z)}) \cdot \frac{||T||}{p} \cdot ||Z_1||^2 \cdot \frac{1}{Im(z)} \cdot ||T||^l \cdot \frac{Im(z)+||T||}{(Im(z))^2}\]
by using bound \(3.4\) in Bai and Zhou~\cite{baizhou}, 
\[||(zI-\tilde{K})^{-1}|| \leq \frac{Im(z)+||T||}{(Im(z))^2}.\] 
Markov's inequality then yields
\[|\mathbb{E}[\frac{\frac{1}{p}\tilde{Z}_1^T\tilde{R}_1T^l(zI-\tilde{K})^{-1}\tilde{Z}_1}{1-\frac{1}{p}\tilde{Z}_1^T\tilde{R}_1\tilde{Z}_1} \cdot \chi_{||Z_1||^2>2N}]| \leq \frac{||T||^{l+1}\cdot (||T||+Im(z))}{(Im(z))^3} \cdot \mathbb{E}[\frac{1}{p}||Z_1||^2 \cdot \chi_{||Z_1||^2>2N}]+\]
\[+\frac{||T||^{l+2} \cdot (||T||+Im(z))}{(Im(z))^4} \cdot \mathbb{E}[(\frac{1}{p}||Z_1||^2)^2 \cdot \chi_{||Z_1||^2>2N}],\]
and
\[\mathbb{E}[\frac{1}{N}||Z_1||^2 \cdot \chi_{||Z_1||^2>2N}] \leq \frac{1}{2}\mathbb{E}[(\frac{1}{N}||Z_1||^2)^2 \cdot \chi_{||Z_1||^2>2N}],\]
\[\mathbb{E}[(\frac{1}{N}||Z_1||^2)^2  \cdot\chi_{||Z_1||^2>2N}]=\mathbb{P}(||Z_1||^2>2N)+\mathbb{E}[((\frac{1}{N}||Z_1||^2)^2-1) \cdot \chi_{||Z_1||^2>2N}] \leq\]
\[\leq Var(\frac{1}{N}||Z_1||^2)+2\mathbb{E}[\frac{1}{N}||Z_1||^2 \cdot |\frac{1}{N}||Z_1||^2-1|] \leq Var(\frac{1}{N}||Z_1||^2)+2\sqrt{\mathbb{E}[(\frac{1}{N}||Z_1||^2)^2 \cdot (\frac{1}{N}||Z_1||^2-1)^2]}\]
via \(0 \leq x^2-1=(x+1)(x-1) \leq 2x(x-1)\) for \(x \geq 1,\) and Cauchy-Schwarz inequality. These two bounds and (\ref{bdtop}) entail
\begin{equation}\tag{\(E1e\)}\label{ext1}
    \mathbb{E}[\frac{\frac{1}{p}\tilde{Z}_1^T\tilde{R}_1T^l(zI-\tilde{K})^{-1}\tilde{Z}_1}{1-\frac{1}{p}\tilde{Z}_1^T\tilde{R}_1\tilde{Z}_1} \cdot \chi_{||Z_1||^2>2N}]=o(1)
\end{equation}
insomuch as Lemmas~\ref{lenlemma},~\ref{lemmamom} imply \(Var(\frac{1}{N}||Z_1||^2)=o(1),\mathbb{E}[(\frac{1}{N}||Z_1||^2)^2 \cdot (\frac{1}{N}||Z_1||^2-1)^2]=o(1)\) respectively, the latter following from
\[\mathbb{E}[(\frac{1}{N}||Z_1||^2)^2 \cdot (\frac{1}{N}||Z_1||^2-1)^2]=\mathbb{E}[(\frac{1}{N}||Z_1||^2)^4]-2\mathbb{E}[(\frac{1}{N}||Z_1||^2)^3]+\mathbb{E}[(\frac{1}{N}||Z_1||^2)^2]=\]
\[=1+o(1)-2(1+o(1))+1+o(1)=o(1).\]
\par
\((ii)\) Continue now with the remainder of the expectation of interest,
\[\mathbb{E}[\frac{\frac{1}{p}\tilde{Z}_1^T\tilde{R}_1T^l(zI-\tilde{K})^{-1}\tilde{Z}_1}{1-\frac{1}{p}\tilde{Z}_1^T\tilde{R}_1\tilde{Z}_1} \cdot \chi_{||Z_1||^2 \leq 2N}].\]
Similarly to the proof of Lemma~\ref{lemmasmall}, the inequality
\[\frac{1}{p}|\tilde{Z}_1^T\tilde{R}_1\tilde{Z}_1| \leq \frac{1}{p}||Z_1||^2 \cdot ||T|| \cdot ||\tilde{R}_1|| \leq \frac{1}{p}||Z_1||^2 \cdot \frac{\mathcal{K}}{Im(z)}\]
entails that when \(Im(z) \geq 4\mathcal{K}\gamma_n,\) 
\[\frac{1}{p}|\tilde{Z}_1^T\tilde{R}_1\tilde{Z}_1| \cdot \chi_{||Z_1||^2 \leq 2N} \leq \frac{1}{p} \cdot 2N \cdot \frac{\mathcal{K}}{Im(z)} \leq \frac{1}{2}.\]
The aforesaid condition on \(z\) holds for \(n \geq n_0,\) whereby the rationale in subsection~\ref{subsect2}
can be used almost verbatim. Namely,
\begin{equation}\tag{\(E2ae\)}\label{bulk1e}
    \mathbb{E}[\frac{\frac{1}{p}\tilde{Z}_1^T\tilde{R}_1T^l(zI-\tilde{K})^{-1}\tilde{Z}_1}{1-\frac{1}{p}\tilde{Z}_1^T\tilde{R}_1\tilde{Z}_1} \cdot \chi_{||Z_1||^2 \leq 2N}]=\Sigma_{1e}+\Sigma_{2e}-\Sigma_{3e},
\end{equation}
\[\Sigma_{1e}=\sum_{0 \leq m \leq M}{\mathbb{E}[Y(\frac{1}{p}\tilde{Z}_1^T\tilde{R}_1\tilde{Z}_1)^m]}, \hspace{0.5cm} \Sigma_{2e}=\mathbb{E}[\frac{Y(\frac{1}{p}\tilde{Z}_1^T\tilde{R}_1\tilde{Z}_1)^{M+1}}{1-\frac{1}{p}\tilde{Z}_1^T\tilde{R}_1\tilde{Z}_1} \cdot \chi_{||Z_1||^2 \leq 2N}], \hspace{0.2cm}\]
\[\Sigma_{3e}=\sum_{0 \leq m \leq M}{\mathbb{E}[Y(\frac{1}{p}\tilde{Z}_1^T\tilde{R}_1\tilde{Z}_1)^m\cdot \chi_{||Z_1||^2>2N}]}\]
for \(Y=\frac{1}{p}\tilde{Z}_1^T\tilde{R}_1T^l(zI-\tilde{K})^{-1}\tilde{Z}_1.\) 
By an argument similar to the one employed for (\ref{bulk2}), 
\begin{equation}\tag{\(E2be\)}\label{bulk2e}
    \Sigma_{2e}=o(1), \hspace{0.5cm} \Sigma_{3e}=o(1):
\end{equation}
the argument for \(\Sigma_{2e}\) is identical, while for \(\Sigma_{3e},\) 
\(\gamma_n \cdot |\tilde{Z}_1^T\tilde{R}_1\tilde{Z}_1| \leq ||Z_1||^2\) suffices (due to the sum in the bound (\ref{sigma3bd})). The latter is guaranteed by \(\gamma_n \cdot ||\tilde{R}_1|| \cdot ||T|| \leq 1,\) which holds for \(n\) sufficiently large from
\[\gamma_n \cdot ||\tilde{R}_1|| \cdot ||T|| \leq \gamma_n \cdot \frac{1}{Im(z)} \cdot \mathcal{K} \leq \frac{\gamma_n \cdot \mathcal{K}}{1+4\gamma \cdot \mathcal{K}}.\]
\par
The remaining component, \(\Sigma_{1e},\) is universal in the following sense:
\begin{equation}\label{rel1}
    \Sigma_{1e}=\sum_{0 \leq m \leq M}{\mathbb{E}[Y(\frac{1}{p}\tilde{Z}_1^T\tilde{R}_1\tilde{Z}_1)^m]}=\sum_{0 \leq m \leq M}{\mathbb{E}[Y_G(\frac{1}{p}y^T\tilde{R}_1y)^m]}+o(1)
\end{equation}
for \(Y_G=\frac{1}{p}y^T\tilde{R}_1T^l(zI-\tilde{K})^{-1}y.\) This is a consequence of the moment computations behind (\ref{indbd}), i.e.,
\[\mathbb{E}[(\frac{1}{N}Z_1^TR_1Z_1)^m]=\mathbb{E}[(\frac{1}{N}tr(R_1))^m]+(Im(z))^{-m} \cdot [O((16Cm)^{m} \cdot (\frac{d}{n^{1/2}})^{1/3})+O(m^2 \cdot \frac{d^2}{n})].\]
Identity (\ref{sumexp}) is a special case of the following result,
\begin{equation}\label{genexp}
    \mathbb{E}[\prod_{1 \leq j \leq q}{Z_1^TW_jZ_1}]=\sum_{1 \leq i_1,i_2,\hspace{0.05cm}...\hspace{0.05cm},i_{2q} \leq N}{\mathbb{E}[(W_1)_{i_1i_2}(W_2)_{i_3i_4}...(W_q)_{i_{2m-1}i_{2m}}] \cdot \mathbb{E}[Z_{1i_1}Z_{1i_2}...Z_{1i_{2q}}]}
\end{equation}
for any random matrices \(W_1,W_2,\hspace{0.05cm}...\hspace{0.05cm},W_q \in \mathbb{R}^{n \times n}\) that are independent of \(Z_1\) (by conditioning on \(\sigma(W_{1},W_2,\hspace{0.05cm}...\hspace{0.05cm},W_q),\) a \(\sigma\)-algebra independent of \(\sigma(Z_1),\) and the tower property): namely, (\ref{sumexp}) is (\ref{genexp}) for 
\[W_1=W_2=...=W_m=\frac{1}{p}R_1.\] 
In the current case, the term corresponding to \(m\) in the left-hand side of (\ref{rel1}) corresponds to
\begin{equation}\label{partcase}
    W_1=\frac{1}{p}T^{1/2}\tilde{R}_1T^l(zI-\tilde{K})^{-1}T^{1/2},\hspace{0.5cm} W_2=...=W_{m+1}=\frac{1}{p}T^{1/2}\tilde{R}_1T^{1/2}
\end{equation}
The proof of (\ref{indbd}), i.e.,
\[\mathbb{E}[(\frac{1}{N}Z_1^TR_1Z_1)^m]=\mathbb{E}[(\frac{1}{N}tr(R_1))^m]+(Im(z))^{-m} \cdot [O((16Cm)^{m} \cdot (\frac{d}{n^{1/2}})^{1/3})+O(m^2 \cdot\frac{d^2}{n})],\]
shows that the first-order contributors are the tuples with all shared degrees vanishing. The argument relies on the growth of the moments of \(x_1,\) as well as 
\[p||W_1|| \leq \frac{1}{Im(z)}.\] 
In light of
\[p||W_1|| \leq ||T||^{l+1} \cdot ||\tilde{R}_1|| \cdot ||(zI-\tilde{K})^{-1}|| \leq ||T||^{l+1} \cdot \frac{1}{Im(z)} \cdot \frac{Im(z)+||T||}{(Im(z))^2},\]
\[p||W_2|| \leq ||T|| \cdot ||\tilde{R}_1|| \leq \frac{||T||}{Im(z)},\]
an almost verbatim rationale can be employed for (\ref{partcase}) since \(\gamma_n \cdot p||W_2|| \leq \frac{1}{4}\) for \(n \geq n_0\) (recall that \(l \in \{0,1\}\) are the sole values of interest). 
This entails that  
\[\mathbb{E}[Y(\frac{1}{p}\tilde{Z}_1^T\tilde{R}_1\tilde{Z}_1)^m]=\mathbb{E}[Y_G(\frac{1}{p}y^T\tilde{R}_1y_1)^m]+o(1)\]
because the first-order contributions are the same for both sides due to \((y_0)_1\) being subgaussian, symmetric with \(\mathbb{E}[((y_0)_1)^{2k}]=(2k-1)!! \leq (2k)^k\) for all \(k \in \mathbb{N}.\) 
Furthermore, the errors remain negligible when summed over \(m,\) providing the desired claim on \(\Sigma_{1e},\) i.e., (\ref{rel1}). Lastly, since the proofs of (\ref{ext1}) and (\ref{bulk2e}) remain valid when \(\tilde{Z}_1\) is replaced by \(y,\) claim (\ref{unmomeq}) is completely justified, and this concludes the proof of Proposition~\ref{propstep2e}.

\section{Auxiliary Results}\label{sectauxlemmas}

This section contains a few lemmas used throughout the paper.

\begin{lemma}\label{ledoitpechelemma}
Suppose \(Z_1,Z_2,\hspace{0.05cm}...\hspace{0.05cm},Z_p \in \mathbb{R}^N,\) let \(S=\frac{1}{p}\sum_{1 \leq j \leq p}{Z_jZ_j^T},\) and assume \(z \in \mathbb{C}\) with \(zI-S, \newline (zI-S+\frac{1}{p}Z_jZ_j^T)_{1 \leq j \leq p}\) invertible. Then 
    \[-1+z \cdot \frac{1}{N}tr((zI-S)^{-1})=-\frac{p}{N}+\frac{1}{N}\sum_{1 \leq j \leq p}{\frac{1}{1-\frac{1}{p}Z_j^TR_jZ_j}},\]
where \(R_j=(zI-S+\frac{1}{p}Z_jZ_j^T)^{-1},\) \(1 \leq j \leq p.\) 
\end{lemma}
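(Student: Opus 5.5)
The plan is to start from the algebraic decomposition $zI = (zI-S) + S$. Multiplying on the right by $(zI-S)^{-1}$ gives
\[
z(zI-S)^{-1} = I + S(zI-S)^{-1} = I + \frac{1}{p}\sum_{1 \leq j \leq p}{Z_jZ_j^T(zI-S)^{-1}}.
\]
Taking traces and dividing by $N$ then yields
\[
-1+z \cdot \frac{1}{N}tr((zI-S)^{-1})=\frac{1}{N}\sum_{1 \leq j \leq p}{\frac{1}{p}Z_j^T(zI-S)^{-1}Z_j}.
\]
Here cyclicity of the trace is used to write $tr(Z_jZ_j^T(zI-S)^{-1})=Z_j^T(zI-S)^{-1}Z_j$. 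The remaining task is to express each summand through $R_j$.

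For this I apply the rank-one identity (the Sherman--Morrison formula, Lemma~\ref{linalglemma}) with $A=R_j^{-1}=zI-S+\frac{1}{p}Z_jZ_j^T$. Since $zI-S=A-\frac{1}{p}Z_jZ_j^T$, it suffices to derive (\ref{multid}) directly from the resolvent identity, which avoids any invertibility worry. From
\[
R_j-(zI-S)^{-1}=-\frac{1}{p}R_jZ_jZ_j^T(zI-S)^{-1},
\]
multiplying on the left by $Z_j^T$ and on the right by $Z_j$ gives, with $a=\frac{1}{p}Z_j^TR_jZ_j$ and $b=\frac{1}{p}Z_j^T(zI-S)^{-1}Z_j$, the relation $a-b=-ab$. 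That is, $(1-a)(1+b)=1$.

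This has two consequences. First, $1-a\neq 0$, so the quotient in the statement is well defined with no extra hypothesis. Second, $b=\frac{1}{1-a}-1$, which is exactly (\ref{eq1234}). Substituting into the trace identity gives
\[
\frac{1}{N}\sum_{j}\Big(\frac{1}{1-\frac{1}{p}Z_j^TR_jZ_j}-1\Big)=-\frac{p}{N}+\frac{1}{N}\sum_{j}\frac{1}{1-\frac{1}{p}Z_j^TR_jZ_j},
\]
which is the claim.

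No step is a serious obstacle. The only delicate point is making sure $1-\frac{1}{p}Z_j^TR_jZ_j\neq 0$ under the stated hypotheses, and the factorization $(1-a)(1+b)=1$ settles this. I would present that factorization explicitly rather than citing Sherman--Morrison, because the latter presupposes the nonvanishing of the denominator.
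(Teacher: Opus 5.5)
Your proof is correct and is essentially the paper's argument: the trace identity from $zI=(zI-S)+S$, then the resolvent identity sandwiched between $Z_j^T$ and $Z_j$ to get $\frac{1}{p}Z_j^T(zI-S)^{-1}Z_j=\frac{1}{1-\frac{1}{p}Z_j^TR_jZ_j}-1$. Your factorization $(1-a)(1+b)=1$ is a slightly tidier form of the paper's contradiction argument that $1-\frac{1}{p}Z_j^TR_jZ_j\neq 0$, and you can drop the mention of Sherman--Morrison since you never actually use it.
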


\begin{proof}
    Begin with
\begin{equation}\label{740}
    -1+z\cdot \frac{1}{N}tr((zI-S)^{-1})=\frac{1}{pN}\sum_{1 \leq j \leq p}{Z_j^T(zI-S)^{-1}Z_j},
\end{equation}
obtained from multiplying
\[-(zI-S)+zI=\frac{1}{p}\sum_{1 \leq j \leq p}{Z_jZ_j^T}\]
on the right by \((zI-S)^{-1},\) taking the trace of each side (using \(tr(AB)=tr(BA)\) for \(A \in \mathbb{C}^{n \times m},B \in \mathbb{C}^{m \times n}\)), and dividing by \(N.\) The resolvent identity \(A^{-1}-B^{-1}=A^{-1}(B-A)B^{-1}\) subsequently gives
\[R_j-(zI-S)^{-1}=-R_j \frac{1}{p}Z_jZ_j^T (zI-S)^{-1},\]
after which multiplication with \(Z_j^T\) on the left and \(Z_j\) on the right yields
\[Z_j^TR_jZ_j-Z_j^T(zI-S)^{-1}Z_j=-\frac{1}{p}Z_j^TR_j Z_j \cdot Z_j^T (zI-S)^{-1}Z_j,\]
whereby
\begin{equation}\label{eq123}
    Z_j^T(zI-S)^{-1}Z_j=\frac{Z_j^TR_jZ_j}{1-\frac{1}{p}Z_j^TR_jZ_j}=-p+\frac{p}{1-\frac{1}{p}Z_j^TR_jZ_j}
\end{equation}
insomuch as
\[Z_j^T(zI-S)^{-1}Z_j \cdot (1-\frac{1}{p}Z_j^TR_jZ_j)=Z_j^TR_jZ_j,\]
entails \(1-\frac{1}{p}Z_j^TR_jZ_j \neq 0\) (else, \(Z_j^TR_jZ_j=0,\) but \(Z_j^TR_jZ_j=p,\) absurd). Lastly, the desired claim ensues from plugging (\ref{eq123}) in (\ref{740}), 
\[-1+z\cdot \frac{1}{N}tr((zI-S)^{-1})=-\frac{p}{N}
    +\frac{1}{N}\sum_{1 \leq j \leq p}{\frac{1}{1-\frac{1}{p}Z_j^TR_jZ_j}}.\]
\end{proof}

\begin{lemma}\label{linalglemma}
    Let \(u,v \in \mathbb{C}^n, A \in \mathbb{C}^{n \times n},\) and suppose \(A,A+uv^T\) are invertible. Then 
    \[(A+uv^T)^{-1}=A^{-1}-\frac{A^{-1}uv^TA^{-1}}{1+v^TA^{-1}u}.\]
\end{lemma}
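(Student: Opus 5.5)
The plan is to verify the Sherman--Morrison identity directly, by multiplying the proposed inverse by \(A+uv^T\) and checking that the product is the identity. Before that, I need to know that the denominator \(1+v^TA^{-1}u\) is nonzero. This is where the invertibility of \(A+uv^T\) enters.

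\emph{Nonvanishing of the denominator.} Suppose \(1+v^TA^{-1}u=0\). Then \(u\neq 0\), so \(w=A^{-1}u\neq 0\), and
\[(A+uv^T)w=u+u\,(v^TA^{-1}u)=u(1+v^TA^{-1}u)=0.\]
This contradicts the invertibility of \(A+uv^T\). Hence the scalar \(c=1+v^TA^{-1}u\) is nonzero and the right-hand side \(B:=A^{-1}-c^{-1}A^{-1}uv^TA^{-1}\) is well defined.

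\emph{Direct multiplication.} Next I compute \((A+uv^T)B\). Expanding gives
\[I+uv^TA^{-1}-c^{-1}uv^TA^{-1}-c^{-1}u\,(v^TA^{-1}u)\,v^TA^{-1}.\]
The key observation is that \(v^TA^{-1}u\) is a scalar, so it can be pulled out, turning the last term into \(-c^{-1}(c-1)\,uv^TA^{-1}\). The three terms containing \(uv^TA^{-1}\) then carry the coefficient \(1-c^{-1}-c^{-1}(c-1)=0\), so \((A+uv^T)B=I\). Since \(A+uv^T\) is a square invertible matrix, a one-sided inverse is the inverse, so \(B=(A+uv^T)^{-1}\).

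The only genuinely non-routine point is the first step, showing \(c\neq 0\). Everything else is the bookkeeping of treating \(v^TA^{-1}u\) as a scalar. I expect no real obstacle here; the proof is a short verification, and \(\mathbb{C}\) versus \(\mathbb{R}\) plays no role since only the transpose, not the conjugate transpose, appears.
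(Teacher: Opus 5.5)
Your proof is correct and is essentially the paper's argument. Both are direct multiplication checks that rely only on \(v^TA^{-1}u\) being a scalar: the paper multiplies the claim by \(A\) on the left and \(A+uv^T\) on the right, which amounts to verifying \(B(A+uv^T)=I\) rather than your \((A+uv^T)B=I\), and both use the invertibility of \(A+uv^T\) only to rule out \(1+v^TA^{-1}u=0\). The one minor difference is how that is done: you use the explicit kernel vector \(A^{-1}u\), while the paper derives \(uv^T=0\) from the reduced identity and reaches the contradiction \(-1=v^TA^{-1}u=tr(A^{-1}uv^T)=0\).
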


\begin{proof}
    Multiply on the left with \(A\) and on the right with \(A+uv^T\) to transform the claimed identity into
    \[uv^T=\frac{uv^TA^{-1}(A+uv^T)}{1+v^TA^{-1}u}.\]
    This is a consequence of
\begin{equation}\label{helpp}
    v^TA^{-1}u \cdot uv^T=uv^TA^{-1}uv^T
\end{equation}
because this will then render 
\[(1+v^TA^{-1}u) \cdot uv^T=uv^TA^{-1}(A+uv^T),\] 
and \(1+v^TA^{-1}u \ne 0:\) otherwise, \(uv^TA^{-1}(A+uv^T)=0,\) from which\footnote{By a slight abuse of notation, zero vectors and matrices are denoted by \(0.\)} \(uv^T=0,\) 
giving rise to a contradiction,
\[-1=v^TA^{-1}u=tr(A^{-1}uv^T)=0.\] 
\par
Identity (\ref{helpp}) comes from 
\(f:\mathbb{C}^{n \times n} \to \mathbb{C}^{n \times n}\) defined by
\[f(M)=v^TMu \cdot uv^T-uv^TMuv^T\] 
vanishing everywhere (i.e., \(f=0\)) as it is linear and \(f(E_{\alpha \beta})=0\) for \(E_{\alpha \beta}=(\chi_{i=\alpha,j=\beta})_{1 \leq i,j \leq n},1 \leq \alpha,\beta \leq n.\) The latter holds insomuch as for \(1 \leq i,j \leq n,\)
\[(f(E_{\alpha \beta}))_{ij}=v_\alpha u_\beta \cdot u_iv_j-(uv^T)_{i\alpha} \cdot (uv^T)_{\beta j}=v_\alpha u_\beta \cdot u_iv_j-u_iv_{\alpha} \cdot u_\beta v_{j}=0.\] 
\end{proof}

\begin{lemma}\label{lenlemma}
    Suppose \(Z_0\) is given by (\ref{zmoddel}), and let \(N=\binom{n}{d}.\) 
    If \(d \leq \frac{n}{2}\) and \(\mathbb{E}[x_1^4] \leq \frac{n-2d+2}{(d-1)^2},\) then 
    \[Var(||Z_0||^2) \leq 2N^2 \cdot \mathbb{E}[x_1^4] \cdot \frac{d^2}{n} \cdot e^{-\frac{(d-1)^2}{n-1}}.\]
\end{lemma}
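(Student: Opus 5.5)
The plan is to compute $Var(||Z_0||^2)$ exactly as a sum over pairs of $d$-subsets, group the terms by the size of the intersection, and compare each group with the $k=1$ group. Write $y_i=x_i^2$, so $\mathbb{E}[y_i]=1$ and $\mathbb{E}[y_i^2]=\mu_4:=\mathbb{E}[x_1^4]\geq 1$. Then $||Z_0||^2=\sum_{I}\prod_{i \in I}y_i$, where $I$ ranges over $d$-subsets of $\{1,\dots,n\}$.

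For two $d$-subsets $I,J$ with $|I\cap J|=k$, independence gives $\mathbb{E}[\prod_{I}y_i\prod_{J}y_j]=\mu_4^k$. Hence
\[Var(||Z_0||^2)=\sum_{I,J}(\mu_4^{|I\cap J|}-1)=N\sum_{k\geq 1}\binom{d}{k}\binom{n-d}{d-k}(\mu_4^k-1).\]
Dividing by $N^2$ turns this into $\sum_{k\geq 1}h_k(\mu_4^k-1)\leq \sum_{k \geq 1}h_k\mu_4^k$, where $h_k=\binom{d}{k}\binom{n-d}{d-k}/\binom{n}{d}$ are hypergeometric probabilities.

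Two estimates then finish the proof.

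\emph{The leading term.} Using $\binom{n}{d}=\frac{n}{d}\binom{n-1}{d-1}$, one gets
\[h_1=\frac{d^2}{n}\cdot\frac{\binom{n-d}{d-1}}{\binom{n-1}{d-1}}=\frac{d^2}{n}\prod_{0\leq j\leq d-2}\Bigl(1-\frac{d-1}{n-1-j}\Bigr).\]
Each factor is at most $1-\frac{d-1}{n-1}$, so
\[h_1\leq \frac{d^2}{n}\Bigl(1-\frac{d-1}{n-1}\Bigr)^{d-1}\leq \frac{d^2}{n}e^{-\frac{(d-1)^2}{n-1}}.\]

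\emph{The higher terms.} For $k\geq 1$ the ratio of consecutive terms is
\[\frac{h_{k+1}\mu_4^{k+1}}{h_k\mu_4^k}=\mu_4\cdot\frac{(d-k)^2}{(k+1)(n-2d+k+1)}\leq \mu_4\cdot\frac{(d-1)^2}{2(n-2d+2)}\leq \frac{1}{2}.\]
The last inequality is exactly the hypothesis $\mathbb{E}[x_1^4]\leq \frac{n-2d+2}{(d-1)^2}$, and $d\leq \frac{n}{2}$ keeps the denominators positive.

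Summing the resulting geometric series gives $\sum_{k\geq1}h_k\mu_4^k\leq 2h_1\mu_4$. Combining this with the bound on $h_1$ yields the claimed inequality.

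The only delicate point is the ratio bound. One must check that $(d-k)^2/((k+1)(n-2d+k+1))$ is maximized at $k=1$: the numerator decreases in $k$ while the denominator increases. This is what makes the hypothesis on $\mathbb{E}[x_1^4]$ precisely sufficient. The exponential factor, for its part, comes for free from the product form of $h_1$. If $d=1$, every sum over $k\geq 2$ is empty and the bound $h_1=1/n$ is immediate.
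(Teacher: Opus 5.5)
Your proposal is correct and follows essentially the same route as the paper: the exact variance formula grouped by intersection size $k$, the consecutive-term ratio $\mathbb{E}[x_1^4]\frac{(d-k)^2}{(k+1)(n-2d+k+1)}$ maximized at $k=1$ and bounded by $\frac{1}{2}$ via the hypothesis, a geometric sum giving twice the $k=1$ term, and the product bound $\frac{d^2}{n}\prod_{0\le j\le d-2}\bigl(1-\frac{d-1}{n-1-j}\bigr)\le \frac{d^2}{n}e^{-\frac{(d-1)^2}{n-1}}$. The differences are cosmetic. You bound each factor by $1-\frac{d-1}{n-1}$ before exponentiating, whereas the paper exponentiates each factor first and then bounds the sum. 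You also treat the degenerate case $d=1$ explicitly, which the paper leaves implicit.
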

\(\newline\)
\textbf{Remark:} Since \(\mathbb{E}[x_1^4] \geq (\mathbb{E}[x_1^2])^2=1,\) the fourth moment condition entails \(d^2 \leq n+1,\) which is stronger than \(d \leq \frac{n}{2}\) so long as \(n \geq 5\) from \(\frac{n^2}{4}-(n+1)=\frac{(n-2)^2}{4}-2.\)

\begin{proof}
    Linearity of expectation and \(x_1,x_2,\hspace{0.05cm}...\hspace{0.05cm},x_n\) being i.i.d. yield
    \[\mathbb{E}[||Z_0||^2]=N \cdot \mathbb{E}[x_1^2x_2^2...x_d^2]=N,\]
    whereby
    \[Var(||Z_0||^2)=\mathbb{E}[(||Z_0||^2-N)^2]=\mathbb{E}[(\sum_{i_1<i_2<...<i_d}{(x_{i_1}^2x_{i_2}^2...x_{i_d}^2-1)})^2].\]
    The pairs making nonzero contributions have their two underlying tuples share \(k\) entries for some \(1 \leq k \leq d,\) whereby 
    \begin{equation}\label{varcomp}
        Var(||Z_0||^2)=\binom{n}{d}\sum_{1 \leq k \leq d}{((\mathbb{E}[x_1^4])^k-1) \cdot \binom{d}{k} \binom{n-d}{d-k}} \leq \binom{n}{d}\sum_{1 \leq k \leq d}{(\mathbb{E}[x_1^4])^k \cdot \binom{d}{k} \binom{n-d}{d-k}}
    \end{equation}
    (fix the first tuple, choose a subset of size \(k\) from the set formed by its entries, and select a subset of size \(d-k\) of a set of size \(n-d\) for the remaining positions of the second tuple). 
    Since \(d-1<n-d,\) the ratio between the terms corresponding to \(k+1\) and \(k\) is
    \[\mathbb{E}[x_1^4] \cdot \frac{\binom{d}{k+1} \binom{n-d}{d-k-1}}{\binom{d}{k} \binom{n-d}{d-k}}=\mathbb{E}[x_1^4] \cdot \frac{d-k}{k+1} \cdot \frac{d-k}{n-2d+k+1},\]
    for \(1 \leq k \leq d-1.\) This product decreases in \(k\) (the numerator decreases and the denominator increases in \(k\)), 
    entailing the maximum is attained at \(k=1,\) at which it is 
    \[\mathbb{E}[x^4] \cdot \frac{(d-1)^2}{2(n-2d+2)} \leq \frac{1}{2}.\] 
    The desired bound 
    ensues from \(\sum_{k \geq 1}{2^{-(k-1)}}=2,\) and
    \[\frac{\binom{n}{d} \cdot d\binom{n-d}{d-1}}{N^2}=\frac{d\binom{n-d}{d-1}}{\binom{n}{d}}=\frac{d^2(n-d)!(n-d)!}{n!(n-2d+1)!}=\frac{d^2(n-d)...(n-2d+2)}{n(n-1)...(n-d+1)}=\]
    \[=\frac{d^2}{n} \cdot \prod_{1 \leq k \leq d-1}{\frac{n-2d+1+k}{n-d+k}}=\frac{d^2}{n} \cdot \prod_{1 \leq k \leq d-1}{(1-\frac{d-1}{n-d+k})} \leq \frac{d^2}{n} \cdot e^{-\sum_{1 \leq k \leq d-1}{\frac{d-1}{n-d+k}}} \leq \frac{d^2}{n} \cdot e^{-\frac{(d-1)^2}{n-1}}\]
    via \(1-x \leq e^{-x}\) for \(x \in \mathbb{R}.\)
    \vspace{0.3cm}
    \par
    Incidentally, the last inequality is tight (in a ratio sense) when \(\mathbb{E}[x^4]-1\) is bounded away from \(0\) and \(d \leq \frac{n}{5}\) because the arguments above and \(\mathbb{E}[x_1^4] \geq (\mathbb{E}[x_1^2])^2=1\) readily imply that
    \[ Var(||Z_0||^2) \geq \binom{n}{d} \cdot d\binom{n-d}{d-1} \cdot (\mathbb{E}[x^4]-1)=N^2 \cdot (\mathbb{E}[x^4]-1) \cdot \frac{d^2}{n} \cdot \prod_{1 \leq k \leq d-1}{(1-\frac{d-1}{n-d+k})} \geq\]
    \begin{equation}\label{lbvar}
        \geq N^2 \cdot (\mathbb{E}[x^4]-1) \cdot \frac{d^2}{n} \cdot e^{-2\sum_{1 \leq k \leq d-1}{\frac{d-1}{n-d+k}}} \geq N^2 \cdot (\mathbb{E}[x^4]-1) \cdot \frac{d^2}{n} \cdot e^{-\frac{2(d-1)^2}{n-d+1}}
    \end{equation}
    by using that \(1-x \geq e^{-2x}\) for \(x \leq \frac{\log{2}}{2}\) from \((1-x-e^{-2x})'=2e^{-2x}-1 \geq 0,\) as well as 
    \[\frac{d-1}{n-d+k} \leq \frac{d-1}{n-d+1} \leq \frac{n/5-1}{n-n/5+1}<\frac{1}{4}<\frac{\log{2}}{2} \hspace{0.5cm} (1 \leq k \leq d-1).\] 
\end{proof}

The following result entails that when \(\frac{d}{n^{1/2}}\) is away from zero, the behavior of \(Var(||Z_0||^2)\) can change significantly from the one described in Lemma~\ref{lenlemma}.

\begin{lemma}\label{lenlemma2}
   Suppose \(Z_0\) is given by (\ref{zmoddel}), and let \(N=\binom{n}{d},B=\mathbb{E}[x_1^4].\) If \((2n)^{1/2} \leq d \leq \frac{n}{8},\) then 
    \[Var(||Z_0||^2) \geq N^2 \cdot \frac{1-B^{-\frac{d^2}{n}}}{8} \cdot (\frac{B}{8e^4})^{\frac{d^2}{n}}.\]
\end{lemma}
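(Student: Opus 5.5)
The plan is to start from the exact variance formula (\ref{varcomp}) in the proof of Lemma~\ref{lenlemma}:
\[Var(||Z_0||^2)=N\sum_{1 \leq k \leq d}{(B^k-1)\binom{d}{k}\binom{n-d}{d-k}}.\]
Every summand is nonnegative because \(B \geq 1\), so it suffices to keep a single well-chosen term. Dividing by \(N^2\), the quantity \(\binom{d}{k}\binom{n-d}{d-k}/\binom{n}{d}\) is the hypergeometric probability \(P_k\) that two independent uniform \(d\)-subsets of \(\{1,\dots,n\}\) share exactly \(k\) elements. Its mean is \(d^2/n \geq 2\) under the hypothesis \((2n)^{1/2} \leq d\). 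I would therefore take \(k=\lceil d^2/n\rceil\), so that
\[\frac{d^2}{n} \leq k \leq \frac{d^2}{n}+1 \leq \frac{3d^2}{2n}.\]
With this choice, \(B^k-1=B^k(1-B^{-k}) \geq B^{d^2/n}(1-B^{-d^2/n})\), which already produces the factor \(1-B^{-d^2/n}\) and the power \(B^{d^2/n}\) in the claim. It remains to show
\[P_k \geq \frac{1}{8}(8e^4)^{-d^2/n}.\]

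For the lower bound on \(P_k\) I would write
\[P_k=\binom{d}{k}\prod_{0 \leq i<k}{\frac{d-i}{n-i}} \cdot \prod_{0 \leq j<d-k}{\frac{n-d-j}{n-k-j}}.\]
The three factors are bounded separately, all ultimately in terms of \(d^2/n\):
\begin{itemize}
\item \(\binom{d}{k} \geq (d/k)^k\).
\item The first product is at least \(((d-k)/n)^k\).
\item Each factor of the second product equals \(1-\frac{d-k}{n-k-j} \geq 1-\frac{d}{n-2d}\). Since \(d \leq n/8\), this ratio is at most \(1/6\), so \(1-x \geq e^{-cx}\) applies with a modest absolute constant \(c\). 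The second product is then at least \(e^{-c'd^2/n}\) for an explicit absolute \(c'\) (a short computation gives \(c'\) about \(2\)).
\end{itemize}
Combining the first two bounds gives
\[\Big(\frac{d(d-k)}{nk}\Big)^k \geq \Big(\frac{d^2}{nk}\Big(1-\frac{k}{d}\Big)\Big)^k.\]
Here \(k/d \leq 3d/(2n) \leq 3/16\) and \(d^2/(nk) \geq 2/3\), so the base is bounded below by an absolute constant, roughly \(1/2\). Using \(k \leq \frac{3}{2}\frac{d^2}{n}\), this becomes a bound of the form \(a^{-d^2/n}\) with \(a\) well below \(8e^4\). Multiplying by \(e^{-c'd^2/n}\) should fit inside \((8e^4)^{-d^2/n}\), with room to spare for the leading factor \(1/8\).

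The main obstacle is purely bookkeeping: making the constants land exactly on \(\frac{1}{8}(8e^4)^{-d^2/n}\). This includes handling the ceiling, where the extra \(+1\) in \(k\) is absorbed using \(d^2/n \geq 2\), and choosing an explicit version of \(1-x \geq e^{-cx}\) valid on \([0,1/6]\). If the crude estimates leave some slack missing, the first fallback is to use the sharper \(\binom{d}{k} \geq \frac{d^k}{k!}\prod_{i<k}(1-i/d)\) together with Stirling, \(k! \leq e k^{k+1/2}e^{-k}\). That gains a factor \(e^k\), and the lost \(\sqrt{k}\) and \(e\) are what the prefactor \(1/8\) is meant to absorb.
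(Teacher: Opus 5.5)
Your proposal is correct and is essentially the paper's argument. Both fix \(k=\lceil d^2/n\rceil\), keep that single term of (\ref{varcomp}), use \(B^k-1\ge B^{d^2/n}(1-B^{-d^2/n})\), and bound the hypergeometric ratio by \(\binom{d}{k}\ge(d/k)^k\)-type estimates together with \(1-x\ge e^{-2x}\) on the remaining \(d-k\) factors; the paper's version is cruder, using \(k\le 2d^2/n\le d/2\) to reach \(8^{-k}e^{-4d^2/n}\).

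Your constants close with ample room. The base is at least \(\frac{2}{3}\cdot\frac{13}{16}=\frac{13}{24}\), and the second product is at least \(e^{-\frac{8}{3}d^2/n}\), which gives \(P_k\ge\big((24/13)^{3/2}e^{8/3}\big)^{-d^2/n}\) with \((24/13)^{3/2}e^{8/3}\approx 36<8e^4\). So the Stirling fallback is not needed.
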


\begin{proof}
    Let \(k=\lceil \frac{d^2}{n} \rceil.\) Then 
    \[2 \leq k \leq \frac{d^2}{n}+1 \leq \frac{2d^2}{n} \leq \frac{d}{2},\] 
    and (\ref{varcomp}) together with \(B=\mathbb{E}[x^4] \geq (\mathbb{E}[x^2])^2=1\) gives
    \[Var(||Z_0||^2) \geq N^2 \cdot (B^k-1) \cdot \frac{\binom{d}{k} \binom{n-d}{d-k}}{\binom{n}{d}}.\]
    By virtue of \(1-B^{-k} \geq 1-B^{-\frac{d^2}{n}},\) it suffices to show
    \[\frac{\binom{d}{k} \binom{n-d}{d-k}}{\binom{n}{d}} \geq 8^{-1} \cdot (8e^4)^{-\frac{d^2}{n}}:\]
    this ensues from
    \[\frac{\binom{d}{k} \binom{n-d}{d-k}}{\binom{n}{d}} \geq \frac{\frac{(d-k)^k}{k!} \cdot \frac{(n-2d)^{d-k}}{(d-k)!}}{\frac{n^d}{d!}}=(1-\frac{2d}{n})^{d-k} \cdot \binom{d}{k} \cdot (\frac{d-k}{n})^{k} \geq e^{-\frac{4d(d-k)}{n}} \cdot (\frac{(d-k)^2}{kn})^k \geq\]
    \[\geq e^{-\frac{4d^2}{n}} \cdot (\frac{d^2/4}{\frac{2d^2}{n} \cdot n})^k= e^{-\frac{4d^2}{n}} \cdot 8^{-k} \geq 8^{-1} \cdot (8e^4)^{-\frac{d^2}{n}}\]
    via \(1-x \geq e^{-2x}\) for \(x \leq \frac{\log{2}}{2},\) and \(\frac{2d}{n} \leq \frac{1}{4}< \frac{\log{2}}{2}.\)
\end{proof}

\begin{lemma}\label{lemmamom}
    Suppose \(\mathbb{E}[x^2]=1,\mathbb{E}[x^{2q}] \leq (Cq)^q\) for all \(q \in \mathbb{N},\) and let 
    \[c(m,n,(d_j)_{1 \leq j \leq m})=\frac{1}{\binom{n}{d_1} \cdot \binom{n}{d_2} \cdot ... \cdot \binom{n}{d_m}}\sum_{1 \leq i_j \leq \binom{n}{d_j}}{\mathbb{E}[Z^2_{d_1,p_1}Z^2_{d_2,p_2}...Z^2_{d_m,p_m}]}\]
    for \(m \in \mathbb{N}, d_1,d_2, \hspace{0.05cm}...\hspace{0.05cm},d_m \in \{1,2,\hspace{0.05cm}...\hspace{0.05cm},n\},\) \(x_1,x_2,\hspace{0.05cm}...\hspace{0.05cm},x_n,\) i.i.d., \(x_1\overset{d}{=} x,\) and \((Z_{l,k})_{1 \leq k \leq \binom{n}{l}}\) a fixed labeling of 
    \((x_{r_1}x_{r_2}...x_{r_l})_{1 \leq r_1<r_2<...<r_l \leq n}.\)
    Then
    \begin{equation}\label{wantedbd}
        c(m,n,(d_j)_{1 \leq j \leq m}) \leq 1+e^{\frac{2C^2e^2(\sum_{1 \leq j \leq m}{d_j})^2}{n}} \cdot \frac{2C^2e^2(\sum_{1 \leq j \leq m}{d_j})^2}{n}
    \end{equation}
    when \(\sum_{1 \leq j \leq m}{d_j} \leq \frac{n^{1/2}}{3Ce}.\)  
\end{lemma}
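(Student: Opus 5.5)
The plan is to rewrite $c(m,n,(d_j)_{1\le j\le m})$ as an expectation over independent uniformly random subsets, and then to control the overlaps between these subsets. Under the fixed labeling, each index $p_j$ corresponds to a subset $A_j\subset\{1,\dots,n\}$ with $|A_j|=d_j$. For $r\in\{1,\dots,n\}$ let $c_r=|\{j: r\in A_j\}|$ be the multiplicity of $r$. By independence of $x_1,\dots,x_n$ and $\mathbb{E}[x^2]=1$,
\[\mathbb{E}[Z^2_{d_1,p_1}\cdots Z^2_{d_m,p_m}]=\prod_{r:\,c_r\ge 2}\mathbb{E}[x^{2c_r}]\le \prod_{r:\,c_r\ge 2}(Cc_r)^{c_r}.\]
Hence $c(m,n,(d_j))=\mathbb{E}_{A}\big[\prod_{r:c_r\ge2}\mathbb{E}[x^{2c_r}]\big]$, where $A_1,\dots,A_m$ are independent and $A_j$ is uniform among $d_j$-subsets. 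The configurations with pairwise disjoint $A_j$ contribute at most $1$, since the weight is $1$ and the probability is at most $1$. This is the leading term in (\ref{wantedbd}).

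For the remaining configurations I will use the total excess $s=\sum_r (c_r-1)_+\ge 1$ as the bookkeeping parameter. Reveal the sets sequentially and let $s_j=|A_j\cap(A_1\cup\dots\cup A_{j-1})|$, so that $\sum_j s_j=s$. Conditionally on the previous sets, whose union has size at most $D:=\sum_j d_j$, the probability that $A_j$ hits the union in exactly $s_j$ prescribed-size positions is at most
\[\binom{d_j}{s_j}\frac{\binom{D}{s_j}\binom{n-d_j}{d_j-s_j}}{\binom{n}{d_j}}\le \frac{1}{s_j!}\Big(\frac{2d_jD}{n}\Big)^{s_j}.\]
To get this I will use the same binomial ratio estimate as in subsection~\ref{subsect4}, namely $\binom{n-l}{d-l}/\binom{n}{d}\le(2d/n)^l\, e^{d^2/(n-d)}$. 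The hypothesis $D\le n^{1/2}/(3Ce)$ makes the exponential factor harmless.

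For the weight, an element of multiplicity $c\ge2$ contributes excess $c-1$. I will use $c^c\le e^{c}c!$, and then $(Cc)^c\le (e^2C^2)^{c-1}(c-1)!$ up to an absolute constant that can be absorbed using $C\ge 1$ (the case $c=2$ gives $4C^2\le e^2C^2$). The resulting factors $(c_r-1)!$ must be traded against the $1/s_j!$ from the probability bound, and against the number of ways of distributing the overlaps among the earlier sets. Summing over $(s_j)$ with $\sum_j s_j=s$ and using $\sum_j d_j=D$ should give a contribution of order $\frac{a^s}{(s-1)!}$ at excess level $s$, where $a=\frac{2C^2e^2D^2}{n}\le \frac29$. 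Finally,
\[\sum_{s\ge1}\frac{a^s}{(s-1)!}=a e^{a},\]
which is exactly the second term in (\ref{wantedbd}).

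The main obstacle is this combinatorial bookkeeping. An element hit by many sets carries the weight $(Cc)^c$, which grows like a factorial. This factorial must be matched against the factorial savings from the probability of repeated hits, and I will need to check that the matching leaves exactly one fewer factorial, i.e. $(s-1)!$ rather than $s!$, in the denominator. If the sequential exposure does not produce this cleanly, my fallback is a cruder geometric bound of the form $\sum_{s\ge1}(\kappa a)^s$ for an absolute constant $\kappa$, using the smallness $a\le 2/9$. That bound is $O(a)$ and still suffices for every application in the paper (\ref{bddmomm}), at the cost of a constant factor that does not match (\ref{wantedbd}) exactly.
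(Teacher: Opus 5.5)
Your reformulation is sound. $c(m,n,(d_j)_{1\le j\le m})=\mathbb{E}_A[W]$ with $W=\prod_{r:\,c_r\ge 2}\mathbb{E}[x^{2c_r}]$ for independent uniform $d_j$-subsets $A_j$. Disjoint configurations contribute at most $1$, the excess equals $\sum_j s_j$, and $\mathbb{P}(\text{excess}=s)\le (2D^2/n)^s/s!$. (Your displayed middle expression has a spurious factor $\binom{d_j}{s_j}$. The true conditional probability $\binom{|U|}{s_j}\binom{n-|U|}{d_j-s_j}/\binom{n}{d_j}$, with $U=A_1\cup\dots\cup A_{j-1}$, does satisfy your right-hand bound.)

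The gap is that neither of your routes reaches the stated bound $1+ae^{a}$, where $a=2C^2e^2D^2/n$.

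\emph{Main route.} The counts $s_j$ do not record \emph{which} earlier indices are hit. So at excess $s$ all you can do is multiply $\mathbb{P}(\text{excess}=s)$ by the largest weight compatible with excess $s$. That weight comes from piling all the excess on one index, and the best available bound for it, $(C(s+1))^{s+1}$, is $s!$ times an exponential in $s$ (up to polynomial factors). The factorials cancel and only geometric decay survives. Hence the level-$s$ bound $a^s/(s-1)!$ you hope for cannot come out of this bookkeeping.

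\emph{Fallback.} With your constants, $(Cc)^c\le(e^2C^2)^{c-1}(c-1)!$ and $\prod_r(c_r-1)!\le s!$, it gives exactly $c\le 1+\sum_{s\ge1}a^s=1+\frac{a}{1-a}$. But $\frac{a}{1-a}>ae^{a}$ for every $a>0$; the two agree only through order $a^2$. So, as written, the proposal proves a weaker inequality than the lemma.

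The fallback is one observation away from a proof, because the lemma's constant has slack. At excess $1$ the weight is exactly $\mathbb{E}[x^4]\le 4C^2$, so that level contributes at most $4C^2\cdot 2D^2/n=4a/e^2<0.55a$. Keep your bound $a^s$ for $s\ge 2$ and use $a\le 2/9$ (from $D\le n^{1/2}/(3Ce)$):
\[c(m,n,(d_j)_{1\le j\le m})\le 1+\frac{4}{e^2}a+\frac{a^2}{1-a}\le 1+\Bigl(0.55+\frac{2}{7}\Bigr)a<1+a\le 1+ae^{a}.\]

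The paper argues per index instead. It picks an index $r$ lying in exactly $k\ge 2$ of the sets, indexed by $J$, and takes a union bound over $r$ and over $J$. It pays $n\,\mathbb{E}[x^{2k}]\,e_k(d_1/n,\dots,d_m/n)\le\frac{C^2e^2D^2}{n}\bigl(\frac{CeD}{n}\bigr)^{k-2}$, where $e_k$ is the elementary symmetric polynomial. It then recurses on $c(m,n-1,\cdot)$ by strong induction on $\sum_j d_j$.

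The key is pairing the moment $\mathbb{E}[x^{2k}]$ of each repeated index with the probability $\prod_{j\in J}d_j/n$ that this particular index lies in these particular sets. That is exactly the information your counts discard. It makes the $k!$ cancel index by index, so no slack in the constant is needed. Done for all repeated indices at once, it gives $c\le\bigl(1+\sum_{k\ge 2}(CeD/n)^k\bigr)^n\le e^{a}$.

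Your repaired argument is shorter and non-inductive, but it closes only because the constant $2C^2e^2$ in the lemma is generous.
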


\begin{proof}
    If there is no overlap among the positions underlying \(p_1,p_2,\hspace{0.05cm}...\hspace{0.05cm},p_m,\) then the expectations contribute at most \(1\) from independence and \(\mathbb{E}[Z^2_{d_j,p_j}]=1\) for \(1 \leq j \leq m.\) Else, there is an index \(r \in \{1,2,\hspace{0.05cm}...\hspace{0.05cm},n\}\) that appears exactly in the sets underpinning the positions \(p_{i_1},p_{i_2},\hspace{0.05cm}...\hspace{0.05cm},p_{i_k}\) for \(1 \leq i_1<i_2<...<i_k \leq m\) and some \(2 \leq k \leq m.\) The contribution of \(x_r\) to the expectation is, by independence, \(\mathbb{E}[x^{2k}],\) and
    \[\binom{n}{d}=\binom{n-1}{d-1} \cdot \frac{n}{d} \hspace{0.5cm} (1 \leq d \leq n), \hspace{0.8cm} \binom{n}{d}=\binom{n-1}{d} \cdot \frac{n}{n-d} \hspace{0.5cm} (1 \leq d \leq n-1)\]
    imply
    \begin{equation}\label{ineqq1}
        c(m,n,(d_j)_{1 \leq j \leq m}) \leq 1+\sum_{2 \leq k \leq m}{n \cdot \mathbb{E}[x^{2k}]\sum_{1 \leq i_1<...<i_k \leq m}{\frac{1}{\frac{n}{d_{i_1}} \cdot \frac{n}{d_{i_2}} \cdot ... \cdot \frac{n}{d_{i_k}}}} \cdot c(m,n-1,(d_j-\chi_{j \in \{i_1,i_2, \hspace{0.05cm}...\hspace{0.05cm},i_k\}})_{1 \leq j \leq m})}.
    \end{equation}
    The multinomial theorem gives 
    \[\sum_{1 \leq i_1<...<i_k \leq m}{\frac{1}{\frac{n}{d_{i_1}} \cdot \frac{n}{d_{i_2}} \cdot ... \cdot \frac{n}{d_{i_k}}}} \leq (\sum_{1 \leq j \leq m}{\frac{1}{\frac{n}{d_j}}})^k \cdot \frac{1}{k!}=(\frac{\sum_{1 \leq j \leq m}{d_j}}{n})^k \cdot \frac{1}{k!},\] 
    from which (\ref{ineqq1}) can be changed to
    \[c(m,n,(d_j)_{1 \leq j \leq m}) \leq 1+\sum_{2 \leq k \leq m}{n \cdot \mathbb{E}[x^{2k}] \cdot (\frac{\sum_{1 \leq j \leq m}{d_j}}{n})^k \cdot \frac{1}{k!} \cdot \max_{1 \leq i_1<...<i_k \leq m}}{c(m,n-1,(d_j-\chi_{j \in \{i_1,i_2, \hspace{0.05cm}...\hspace{0.05cm},i_k\}})_{1 \leq j \leq m})}.\]
    \par
    Proceed by induction on \(s=\sum_{1 \leq j \leq m}{d_j} \leq \frac{n^{1/2}}{3Ce}.\) Note that \(n \geq 2\) from \(s \geq 1,3Ce \geq 3e>1\) as
    \begin{equation}\label{ceq}
        C \geq \mathbb{E}[x^2]=1.
    \end{equation}
    The base case \(s=1\) has \(m=1,\) and the result is clear (the sum is at most \(1\)). Suppose next the bound holds for \(s-1 \geq 1.\) By virtue of \(k! \geq (\frac{k}{e})^k,\) it suffices to show that\footnote{The bound in (\ref{wantedbd}) does  not depend directly on \(m,\) and thus the cases in which \(d_{i_j}=1\) for some \(1 \leq j \leq k\) cause no issue (such positions can be dropped from the tuple corresponding to \(n-1\)).}
    \begin{equation}\label{eqsum}
        1+\sum_{2 \leq k \leq m}{n \cdot (Ck)^{k} \cdot (\frac{es}{nk})^k \cdot (1+e^{\frac{2C^2e^2(s-k)^2}{n-1}} \cdot \frac{2C^2e^2(s-k)^2}{n-1})}:=1+I+II
    \end{equation}
    is upper bounded by the right-hand side term of (\ref{wantedbd}) (using as well that \(x \to xe^{x}\) increases on \([0,\infty)\)), where \(I,II\) are obtained by opening the parentheses in the last factors of the summands since 
     \[s-k \leq \frac{n^{1/2}}{3Ce}-2 \leq \frac{(n-1)^{1/2}}{3Ce}\] 
     from 
     \[\frac{n^{1/2}-(n-1)^{1/2}}{3Ce}=\frac{1}{3Ce \cdot (n^{1/2}+(n-1)^{1/2})} \leq \frac{1}{3Ce} \leq \frac{1}{3e}<1\]
     via (\ref{ceq}). 
     \par
     Rewrite the factors in (\ref{eqsum}) as
    \[n \cdot (Ck)^{k} \cdot (\frac{es}{nk})^k=n \cdot (\frac{Ces}{n})^{k}=\frac{C^2e^2s^2}{n} \cdot (\frac{Ces}{n})^{k-2},\]
    from which it is enough to show that
    \[I=\frac{C^2e^2s^2}{n} \sum_{2 \leq k \leq m}{(\frac{Ces}{n})^{k-2}} \leq e^{\frac{2C^2e^2s^2}{n}} \cdot \frac{C^2e^2s^2}{n},\]
    \[II=\frac{C^2e^2s^2}{n} \sum_{2 \leq k \leq m}{(\frac{Ces}{n})^{k-2} \cdot e^{\frac{2C^2e^2(s-k)^2}{n-1}} \cdot \frac{2C^2e^2(s-k)^2}{n-1}} \leq e^{\frac{2C^2e^2s^2}{n}} \cdot \frac{C^2e^2s^2}{n}\]
    when \(m \leq s \leq \frac{n^{1/2}}{3Ce}\) to complete the induction step. 
    \par
    Begin with \(I:\) the desired claim is equivalent to
    \[\sum_{0 \leq k \leq m-2}{(\frac{Ces}{n})^{k}} \leq e^{\frac{2C^2e^2s^2}{n}},\]
    and ensues from
    \begin{equation}\label{helper}
        \sum_{0 \leq k \leq M}{x^k} \leq e^{Mx} \hspace{1cm} (x \geq 0,M \geq 0),
    \end{equation}
    a consequence of
    \[e^{Mx}=\sum_{k \geq 0}{\frac{(Mx)^k}{k!}} \geq \sum_{k \geq 0}{\frac{(Mx)^k}{k^k}} \geq \sum_{0 \leq k \leq M}{x^k},\]
    under the convention \(0^0:=1.\) Namely, (\ref{helper}) gives
    \[\sum_{0 \leq k \leq m-2}{(\frac{Ces}{n})^{k}} \leq e^{\max{(m-2,0)} \cdot \frac{Ces}{n}}<e^{s \cdot \frac{Ces}{n}}< e^{\frac{2C^2e^2s^2}{n}}\]
    by using \(s \geq m\) and (\ref{ceq}). 
    \par
    Consider now \(II,\) for which it suffices to justify
    \[\frac{C^2e^2s^2}{n}\sum_{2 \leq k \leq m}{(\frac{Ces}{n})^{k-2} \cdot e^{\frac{2C^2e^2(s-k)^2}{n-1}}} \leq e^{\frac{2C^2e^2s^2}{n}} \cdot \frac{n-1}{2n}\]
    from \((s-k)^2 \leq s^2\) for \(k \leq m \leq s.\) Since \((s-k)^2 \leq s(s-k),\) and the corresponding terms, after replacing the squares in the exponentials by these linear bounds in \(k,\) satisfy
    \[\frac{t(k+1)}{t(k)}=\frac{Ces}{n} \cdot e^{-\frac{2C^2e^2s}{n-1}}< \frac{n^{1/2}}{3n} \leq \frac{1}{3}.\]
    This renders that the sum is at most \(\frac{1}{1-\frac{1}{3}} \cdot t(2) \leq 2t(2),\) amounting to an overall bound of at most
    \[\frac{C^2e^2s^2}{n} \cdot 2 \cdot e^{\frac{2C^2e^2s(s-2)}{n-1}} \leq e^{\frac{2C^2e^2s^2}{n}} \cdot \frac{n-1}{2n}:\]
    to derive this last inequality, use
    \[e^{\frac{2C^2e^2s(s-2)}{n-1}}< e^{\frac{2C^2e^2s^2}{n}}\]
    from 
    \[\frac{s-2}{n-1}-\frac{s}{n}=\frac{n(s-2)-s(n-1)}{n(n-1)}=\frac{s-2n}{n(n-1)} \leq \frac{-n}{n(n-1)}<0\] 
    as \(s \leq \frac{n^{1/2}}{3Ce} \leq \frac{n^{1/2}}{3e}<n,\) while 
    \[\frac{C^2e^2s^2}{n} \cdot 2 \cdot \frac{2n}{n-1} \leq C^2e^2 \cdot \frac{1}{9C^2e^2}\cdot 2 \cdot 4=\frac{8}{9}<1,\]
    completing the induction step (recall that \(n \geq 2\)).
    
\end{proof}

\bibliography{}

\end{document}